\crefname{equation}{}{}
\colorlet{refkey}{orange!20}
\colorlet{labelkey}{blue!60}
\numberwithin{equation}{section}
\newtheorem{theorem}{Theorem}[section]
\newtheorem{proposition}[theorem]{Proposition}
\newtheorem{lemma}[theorem]{Lemma}
\newtheorem{corollary}[theorem]{Corollary}
\theoremstyle{definition}
\newtheorem{definition}[theorem]{Definition}
\newtheorem{construction}[theorem]{Construction}
\theoremstyle{remark}
\newcommand{\abs}[1]{\left\lvert#1\right\rvert}
\newcommand{\norm}[1]{\left\lVert#1\right\rVert}
\newcommand{\floor}[1]{\left\lfloor #1 \right\rfloor}
\newcommand{\paren}[1]{\left( #1 \right)}
\newcommand{\wh}{\widehat}
\DeclareMathOperator{\core}{core}
\DeclareMathOperator{\tower}{tower}
\newcommand{\EE}{\mathbb{E}}
\newcommand{\FF}{\mathbb{F}}
\newcommand{\RR}{\mathbb{R}}
\newcommand{\PP}{\mathbb{P}}
\newcommand{\cP}{\mathcal{P}}
\newcommand{\cQ}{\mathcal{Q}}
\newcommand{\TV}{\mathrm{TV}}
\title{Removal lemmas and approximate homomorphisms}
\author[Fox]{Jacob Fox}
\address{Department of Mathematics, Stanford University,
Stanford, CA 94305, USA}
\email{jacobfox@stanford.edu}
\author[Zhao]{Yufei Zhao}
\address{Department of Mathematics, Massachusetts Institute of Technology, Cambridge, MA 02139, USA}
\email{yufeiz@mit.edu}
\thanks{Fox was supported by a Packard Fellowship and by NSF award DMS-1855635. Zhao was
supported by NSF Award DMS-1764176, a Sloan Research Fellowship, and the MIT Solomon Buchsbaum Fund.}
\begin{document}

\begin{abstract}
We study quantitative relationships between the triangle removal lemma and several of its variants. One such variant, which we call the \emph{triangle-free lemma}, states that for each $\epsilon>0$ there exists $M$ such that every triangle-free graph $G$ has an $\epsilon$-approximate homomorphism to a triangle-free graph $F$ on at most $M$ vertices (here an \emph{$\epsilon$-approximate homomorphism} is a map $V(G) \to V(F)$ where all but at most $\epsilon \abs{V(G)}^2$ edges of $G$ are mapped to edges of $F$).
One consequence of our results is that the least possible $M$ in the triangle-free lemma grows faster than exponential in any polynomial in $\epsilon^{-1}$. We also prove more general results for arbitrary graphs, as well as arithmetic analogues over finite fields, where the bounds are close to optimal. 
\end{abstract}

\maketitle

\section{Introduction} \label{sec:intro}

\subsection{Graph removal and related results}

The triangle removal lemma of Ruzsa and Szemer\'edi~\cite{RS78} is a fundamental tool in extremal combinatorics.

\begin{theorem}[Triangle removal lemma] \label{thm:TRL}
For every $\epsilon > 0$, there exists $\delta > 0$ such that every $n$-vertex graph with fewer than $\delta n^3$ triangles can be made triangle-free by deleting at most $\epsilon n^2$ edges. 
\end{theorem}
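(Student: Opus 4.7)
The plan is to run the classical ``regularize, clean, count'' argument based on Szemer\'edi's regularity lemma. First I would apply the regularity lemma to $G$ with a regularity parameter $\eta = \eta(\epsilon)$ (to be fixed later, with $\eta$ much smaller than $\epsilon$), obtaining an equipartition $V(G) = V_1 \cup \cdots \cup V_k$ with $k$ bounded in terms of $\eta$ and such that all but at most $\eta \binom{k}{2}$ of the pairs $(V_i,V_j)$ are $\eta$-regular.

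Next I would form a reduced graph $G'$ by removing every edge of $G$ that (i) has both endpoints in the same part, or (ii) lies in a non-$\eta$-regular pair, or (iii) lies in a regular pair of density less than $\epsilon/2$. A routine estimate bounds the number of deleted edges by roughly $n^2/k + \eta n^2 + (\epsilon/2) n^2$, which I can make at most $\epsilon n^2$ by taking $k$ large and $\eta$ small enough in terms of $\epsilon$.

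The heart of the argument is the following dichotomy for $G'$. Any remaining triangle in $G'$ must have its three vertices in three distinct parts $V_i, V_j, V_\ell$ that are pairwise $\eta$-regular with density at least $\epsilon/2$. The standard triangle counting lemma (a direct consequence of regularity) then produces at least $c(\epsilon) \abs{V_i}\abs{V_j}\abs{V_\ell} \ge c(\epsilon) (n/(2k))^3$ triangles in the original $G$, for some $c(\epsilon) > 0$ essentially of size $(\epsilon/4)^3$ provided $\eta$ was chosen sufficiently small. Setting $\delta$ strictly below this quantity forces $G'$ to be triangle-free, so that the at most $\epsilon n^2$ deleted edges already witness the conclusion.

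The main obstacle is not existence but dependence: because $k$ is forced to grow as a tower of height polynomial in $1/\eta$, this method only yields a tower-type bound for $1/\delta$ in terms of $\epsilon^{-1}$. Improving this to a bound of tower height $O(\log(1/\epsilon))$ or better is a well-known open problem, but no such improvement is needed for the qualitative statement requested here, and indeed one of the messages of this paper is that a tower-type lower bound is unavoidable in related removal-type results.
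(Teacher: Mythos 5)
The paper does not supply a proof of \cref{thm:TRL}; it is cited as the classical result of Ruzsa and Szemer\'edi~\cite{RS78}, and the introduction merely alludes to ``the standard proof\ldots which uses Szemer\'edi's regularity lemma'' and its tower-type dependence. Your sketch is precisely that standard regularize--clean--count argument and is correct: regularizing with a parameter $\eta = \eta(\epsilon)$, cleaning away edges inside parts, in irregular pairs, and in pairs of density below $\epsilon/2$ (costing $\le \epsilon n^2$ edges once $\eta$ is small and the number of parts $k$ is at least, say, $4/\epsilon$, which the regularity lemma permits), and then invoking the triangle counting lemma to conclude that any surviving triangle forces $\ge c(\epsilon)(n/k)^3$ triangles in $G$. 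Taking $\delta$ below $c(\epsilon)/K(\eta)^3$, with $K(\eta)$ the regularity lemma's upper bound on $k$, completes the proof; the only detail you leave implicit is the degenerate case $n < K(\eta)$, where $\delta n^3 < 1$ forces $G$ to be triangle-free outright and there is nothing to do. Your closing remarks about the tower-type bound and its relation to the themes of the paper are also accurate.
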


\begin{definition}
	Let $\delta_{TRL}(\epsilon)$ denote the largest possible constant $\delta$ in \cref{thm:TRL}.
\end{definition}

The standard proof of the triangle removal lemma, which uses Szemer\'edi's regularity lemma~\cite{Sze78}, gives an upper bound on $\delta_{TRL}(\epsilon)^{-1}$ which is a tower of $2$'s of height $\epsilon^{-O(1)}$. 
The tower height was improved to $O(\log (1/\epsilon))$ by Fox \cite{Fox11}. 
On the other hand, only a slightly superpolynomial lower bound $1/\delta_{TRL}(\epsilon) \ge (1/\epsilon)^{c\log(1/\epsilon)}$ is known~\cite{RS78}, coming from the Behrend construction of large sets without 3-term arithmetic progressions~\cite{Beh46}.

The standard regularity proof of the triangle removal lemma actually shows that edges can be removed in a bounded complexity way. 

\begin{theorem}[Triangle removal lemma with bounded complexity] \label{thm:TRL-bdd}
For every $\epsilon>0$, there exist $\delta > 0$ and $M$ such that for every $n$-vertex graph $G$ with fewer than $\delta n^3$ triangles, there is a vertex partition $V(G)=V_1 \cup \ldots \cup V_{M}$, and a triangle-free graph $G'$ on $V(G)$ which is complete or empty between each pair $(V_i, V_j)$ and satisfying $|E(G) \setminus E(G')| \leq \epsilon n^2$. 
\end{theorem}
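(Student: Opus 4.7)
The plan is to apply Szemer\'edi's regularity lemma to $G$ and then build $G'$ from the resulting partition by declaring a pair $(V_i, V_j)$ to be complete in $G'$ whenever it is sufficiently regular and dense in $G$, and empty otherwise (with no edges placed inside any single $V_i$). Concretely, fix a density threshold $\eta$ of order $\epsilon$ and a regularity parameter $\epsilon'$ much smaller than $\eta$, and apply the regularity lemma with parameter $\epsilon'$ and a lower bound of order $1/\epsilon$ on the number of parts, obtaining an equitable partition $V(G) = V_1 \cup \cdots \cup V_M$ in which all but an $\epsilon'$-fraction of the pairs are $\epsilon'$-regular. Define $G'$ by the recipe above.

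The first step is to bound $|E(G) \setminus E(G')|$. The deleted edges fall into three buckets: edges inside a single $V_i$, edges across irregular pairs, and edges across $\epsilon'$-regular pairs whose $G$-density is below $\eta$. A routine count bounds the three contributions by $n^2/(2M)$, $\epsilon' n^2$, and $\eta n^2/2$ respectively, so that with $\eta$, $\epsilon'$, and the lower bound on $M$ chosen appropriately in terms of $\epsilon$, the total is at most $\epsilon n^2$.

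The core of the argument is showing that $G'$ is triangle-free. Since $G'$ carries no edges inside a single part, any triangle of $G'$ would use three distinct parts $V_a, V_b, V_c$ such that all three of the pairs $(V_a, V_b)$, $(V_b, V_c)$, $(V_a, V_c)$ are $\epsilon'$-regular in $G$ with $G$-density at least $\eta$. The triangle counting lemma then produces at least $(\eta - 2\epsilon')^3 |V_a| |V_b| |V_c| \geq c(\eta / M)^3 n^3$ triangles in $G$ for an absolute constant $c$, once $\epsilon'$ is chosen small compared to $\eta$. Setting $\delta := c(\eta/M)^3$ therefore contradicts the hypothesis on $G$, so $G'$ is triangle-free.

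There is no substantive obstacle here beyond parameter-juggling: the argument is a standard packaging of the regularity lemma and the triangle counting lemma. The only cost is that $M$ inherits the tower-type dependence on $1/\epsilon'$ from the regularity lemma, and $1/\delta$ in turn becomes tower-type in $1/\epsilon$. This is precisely the quantitative bound one extracts from the standard regularity proof of \cref{thm:TRL}, and is essentially why the bound on $M$ in the statement cannot be improved without improving the regularity lemma itself.
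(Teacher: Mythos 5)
Your proposal is correct and is precisely the standard regularity-lemma proof that the paper invokes without detailing ("The standard regularity proof of the triangle removal lemma actually shows that edges can be removed in a bounded complexity way"). The parameter bookkeeping (density threshold $\eta$, regularity parameter $\epsilon'$, lower bound on the number of parts to control within-part edges, and the counting-lemma contradiction to show $G'$ is triangle-free) is exactly the canonical argument, and the resulting tower-type bounds on $M$ and $1/\delta$ match the paper's discussion.
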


The above formulation of the removal lemma was highlighted by Tao~\cite{Tao06}, who gave a proof of the hypergraph removal lemma with similar bounded complexity features (the hypergraph removal lemma was independently proved by Gowers~\cite{Gow07} and R\"odl and Schacht~\cite{RS09}) and then used it to establish a removal lemma for sparse hypergraphs, which then led to the Gaussian integer analogue of the Green--Tao theorem~\cite{Tao06gaussian} (also see \cite{CFZ15} for an improvement and simplification).

We introduce the notion of an approximate graph homomorphism, 
which allows us to give a succinct restatement of the above result.

\begin{definition}[Approximate homomorphisms]
	Given graphs $G$ and $F$, a map $\phi \colon V(G) \to V(F)$ is an \emph{$\epsilon$-approximate homomorphism} if at most $\epsilon |V(G)|^2$ edges of $G$ do not map to edges of $F$ under $\phi$.
\end{definition}

The usual notion of a graph homomorphism corresponds to $\epsilon = 0$. 
With this notion, \cref{thm:TRL-bdd} is equivalent to the following statement.

\begin{theorem}[Triangle removal lemma with bounded complexity, rephrased] \label{thm:TRL-ah}
For every $\epsilon>0$, there exist $\delta > 0$ and $M$ such that every $n$-vertex graph $G$ with fewer than $\delta n^3$ triangles has an $\epsilon$-approximate homomorphism into some triangle-free graph with at most $M$ vertices.
\end{theorem}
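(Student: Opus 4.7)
The plan is to derive \cref{thm:TRL-ah} from \cref{thm:TRL-bdd} by a near-formal rephrasing, and to verify the reverse implication as well so as to justify the word ``equivalent'' in the text. For the forward direction I would apply \cref{thm:TRL-bdd} to $G$ with parameter $\epsilon/2$, obtaining a partition $V(G) = V_1 \cup \dots \cup V_{M_0}$ and a triangle-free $G'$ which is complete or empty between each pair $(V_i,V_j)$ and satisfies $\abs{E(G) \setminus E(G')} \le \epsilon n^2/2$. By further subdividing any oversized part into equal-sized pieces (which preserves the ``complete or empty between pairs'' property since the new parts sit inside old ones) I may additionally assume $|V_i| \le 2n/M_0$ for each $i$ and $M_0 \ge 4/\epsilon$, enlarging $M_0$ only by a bounded amount.

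Next I would define $F$ to be the graph on vertex set $[M_0]$ with $ij \in E(F)$ iff $i \ne j$ and $G'$ is complete between $V_i$ and $V_j$, and let $\phi \colon V(G)\to V(F)$ send every $v\in V_i$ to $i$. Then $F$ is triangle-free, since any triangle $ijk$ in $F$ would lift to a triangle in $G'$ by choosing one vertex from each of $V_i,V_j,V_k$. The edges of $G$ not mapped by $\phi$ to edges of $F$ are exactly those in $E(G)\setminus E(G')$ (at most $\epsilon n^2/2$) together with those of $G$ lying inside some $V_i$ (at most $\sum_i \binom{|V_i|}{2}\le 2n^2/M_0 \le \epsilon n^2/2$), so $\phi$ is an $\epsilon$-approximate homomorphism and setting $M := M_0$ completes this direction.

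For the converse, given an $\epsilon$-approximate homomorphism $\phi \colon V(G) \to V(F)$ into a triangle-free $F$ with $|V(F)| \le M$, I would let $V_i = \phi^{-1}(i)$ for $i\in V(F)$ (padding with empty parts if desired) and declare $G'$ to be complete between $V_i$ and $V_j$ when $ij \in E(F)$ and empty on all other pairs, including inside each $V_i$. Then $G'$ is triangle-free since any triangle in $G'$ descends to one in $F$, and $\abs{E(G)\setminus E(G')}$ equals the number of $G$-edges not mapped to edges of $F$, hence at most $\epsilon n^2$. The only potentially delicate point in either direction is the treatment of $G$-edges internal to a single part, which have no loop in the simple graph $F$ to map to; in the forward direction this is absorbed into the error by controlling part sizes, and in the converse direction it is automatic. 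Apart from this piece of bookkeeping, the argument is purely a translation between the two formulations.
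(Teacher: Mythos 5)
Your proof is correct and follows what the paper clearly intends: it treats \cref{thm:TRL-ah} as a near-formal rephrasing of \cref{thm:TRL-bdd}, and both of your directions (forward via controlling part sizes to absorb within-part edges, converse via pulling back $F$ along $\phi$) are the natural verifications the paper leaves implicit. The only point you should tidy is the forward direction when some $V_i$ is empty: then ``$G'$ is complete between $V_i$ and $V_j$'' holds vacuously, so your definition could give $F$ a triangle supported on an index with $V_i = \emptyset$; simply discarding empty parts (or defining $E(F)$ only over indices with nonempty parts) fixes this without affecting the edge count or the size bound on $F$.
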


The following special case of \cref{thm:TRL-ah} for triangle-free graphs $G$ is already interesting.

\begin{theorem}[Triangle-free lemma] \label{thm:TFL}
For every $\epsilon > 0$, there exists $M$ such that every triangle-free graph has an $\epsilon$-approximate homomorphism to a triangle-free graph on at most $M$ vertices.
\end{theorem}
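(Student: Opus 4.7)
My plan is to note first that \cref{thm:TFL} is an immediate corollary of \cref{thm:TRL-ah}: a triangle-free graph $G$ vacuously has fewer than $\delta n^3$ triangles for every $\delta>0$, so applying \cref{thm:TRL-ah} with the $\delta$ and $M$ it produces for the given $\epsilon$ yields an $\epsilon$-approximate homomorphism from $G$ into a triangle-free graph on at most $M$ vertices. This is logically the entire proof.

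For transparency, I would also sketch the direct argument via Szemer\'edi's regularity lemma, which is self-contained and already captures the essential content. First I would apply the regularity lemma to $G$ with parameter $\eta>0$ (to be chosen) to obtain an equitable partition $V(G)=V_1\cup\cdots\cup V_k$ with at most $\eta k^2$ non-$\eta$-regular pairs, where $k$ is bounded by a function of $\eta$. Next I would form the \emph{reduced graph} $F$ on vertex set $[k]$, making $ij\in E(F)$ iff the pair $(V_i,V_j)$ is $\eta$-regular with edge density at least $d$, where $d>0$ is another small parameter to be chosen. The candidate approximate homomorphism is $\phi(v)=i$ for $v\in V_i$.

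The main (and essentially only) step is to verify that $F$ is triangle-free; this is where triangle-freeness of $G$ enters. If $ijk$ were a triangle in $F$, the standard counting lemma for $\eta$-regular triples of density at least $d$ would produce at least $(d-2\eta)^3|V_i||V_j||V_k|$ triangles in $G$, which is strictly positive once $\eta\ll d$, contradicting triangle-freeness of $G$. Once $F$ is triangle-free, the remainder is routine bookkeeping: the edges $uv\in E(G)$ whose images $\phi(u)\phi(v)$ fail to lie in $E(F)$ are exactly those inside a single part, across a non-regular pair, or across a regular pair of density less than $d$, totalling at most $(1/k+\eta+d)n^2$ edges. Choosing $\eta,d\ll\epsilon$ and $k$ large enough makes $\phi$ an $\epsilon$-approximate homomorphism, and one can take $M=k$. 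I do not anticipate any further obstacles; the tower-type dependence of $M$ on $\epsilon$ that this proof produces is exactly the quantitative behavior that the rest of the paper sets out to analyze.
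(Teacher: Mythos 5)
Your proof is correct and matches the paper's framing: the paper introduces \cref{thm:TFL} precisely as the special case of \cref{thm:TRL-ah} for triangle-free $G$, which is your first observation, and the reduced-graph argument you sketch is exactly the ``standard regularity proof'' the paper alludes to (and notes gives the tower-type bound $M_{TFL}(\epsilon)\le\tower(O(1/\epsilon))$). No gaps.
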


\begin{definition}
Let $M_{TFL}(\epsilon)$ denote the smallest possible $M$ in \cref{thm:TFL}.
\end{definition}

Note that the triangle removal lemma (\cref{thm:TRL}) and triangle-free lemma (\cref{thm:TFL}) together imply \cref{thm:TRL-bdd,thm:TRL-ah}.
Indeed, starting with an $n$-vertex graph with fewer than $\delta_{TRL}(\epsilon/2) n^3$ triangles, first delete $(\epsilon/2) n^2$ edges to get rid of all triangles, and then find an $\epsilon/2$-approximate homomorphism into a triangle-free graph on $M_{TRL}(\epsilon/2)$ vertices.

Motivated by graph property testing, Hoppen, Kohayakawa, Lang, Lefmann, and Stagni~\cite{HKLLS20} showed that 
one can deduce \cref{thm:TRL-bdd,thm:TRL-ah,thm:TFL}
using the triangle removal lemma (\cref{thm:TRL}) combined with the Frieze--Kannan weak regularity lemma~\cite{FK99}.
In particular, the deduction does not need the full Szemer\'edi graph regularity lemma.
This implies that
\begin{equation}
	\label{eq:TFL-TRL}
M_{TFL}(\epsilon) \le e^{O(\delta_{TRL}(\epsilon/C)^{-2})},
\end{equation}
which is already better than the usual bound of $M_{TFL} \le \tower(\epsilon^{-O(1)})$ obtained from the standard regularity proof (here $\tower(m)$ denotes an exponential tower of $2$'s of height $m$).
Indeed, \cref{eq:TFL-TRL} is superior since $1/\delta_{TRL}(\epsilon) \le \tower(O(\log(1/\epsilon)))$ \cite{Fox11}, and potentially $1/\delta_{TRL}(\epsilon)$ could be much smaller.
We include a proof sketch of \cref{eq:TFL-TRL} in \cref{sec:TFL-TRL}.

We provide a complementary lower bound to $M_{TFL}(\epsilon)$ in terms of the following close cousin of the triangle removal lemma.

\begin{theorem}[Diamond-free lemma] \label{thm:DFL}
	For every $\epsilon > 0$, there exists some $N$ such that for every $n \ge N$, every $n$-vertex graph where each edge lies in a unique triangle has at most $\epsilon n^2$ edges.
\end{theorem}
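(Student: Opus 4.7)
The plan is to deduce the diamond-free lemma directly from the triangle removal lemma (\cref{thm:TRL}). The key structural observation is that in a graph $G$ where every edge lies in a unique triangle, the triangles of $G$ are pairwise edge-disjoint, and a double count of edge-triangle incidences shows that the number of triangles of $G$ equals exactly $|E(G)|/3$.

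Suppose for contradiction that some $n$-vertex graph $G$ with the diamond-free property has $|E(G)| > \epsilon n^2$. Edge-disjointness forces any triangle-free subgraph of $G$ to omit at least one edge from each of the $|E(G)|/3 > \epsilon n^2 / 3$ triangles, so destroying every triangle of $G$ requires removing at least $\epsilon n^2 / 3$ edges.

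I then apply the triangle removal lemma with parameter $\epsilon/3$: setting $\delta := \delta_{TRL}(\epsilon/3)$, since $G$ is \emph{not} within $(\epsilon/3) n^2$ edge-deletions of being triangle-free, $G$ must contain at least $\delta n^3$ triangles. On the other hand, the number of triangles of $G$ is at most $|E(G)|/3 \le \binom{n}{2}/3 < n^2/6$. These two estimates are inconsistent whenever $\delta n^3 > n^2/6$, i.e., whenever $n > 1/(6\delta)$, so taking $N := \lceil 1/(6\delta) \rceil + 1$ suffices.

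The argument is quite short once one has the correct counting identity bridging the hypothesis with the removal lemma. The conceptual point --- what might be called the main step --- is precisely the cubic-versus-quadratic mismatch: the diamond-free hypothesis caps the triangle count at $O(n^2)$, while the removal lemma demands $\Omega(n^3)$ triangles whenever the edge-deletion distance to triangle-freeness is $\Omega(n^2)$. Quantitatively, $N$ inherits the (potentially tower-type) dependence of $1/\delta_{TRL}(\epsilon)$ on $\epsilon$, which is consistent with the discussion after \cref{thm:TRL}.
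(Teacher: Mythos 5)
Your proof is correct and takes essentially the same approach as the paper's (given inline just after the definition of $N_{DFL}$): both deduce the diamond-free lemma directly from the triangle removal lemma by using edge-disjointness of the triangles to cap the triangle count at $|E(G)|/3 = O(n^2)$ and to lower-bound the edge-deletion distance to triangle-freeness, yielding $N_{DFL}(\epsilon) = O(1/\delta_{TRL}(\epsilon/3))$. The only cosmetic difference is that you phrase the argument as a contradiction, whereas the paper argues directly.
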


\begin{definition}
Let $N_{DFL}(\epsilon)$ denote the smallest constant $N$ so that \cref{thm:DFL} holds.
\end{definition}

The diamond-free lemma is a direct corollary of the triangle removal lemma, yielding $N_{DFL}(\epsilon) \le 1/\delta_{TRL}(\epsilon/3)$. 
Indeed, suppose we have a graph on $n \ge 1/\delta_{TRL}(\epsilon/3)$ vertices and each edge lies in a unique triangle. 
Then the number of triangles is at most a third times the number of edges, which is at most $n^2 \le \delta_{TRL}(\epsilon/3) n^3$.
So by the triangle removal lemma, one can remove at most $(\epsilon/3)n^2$ edges to make this graph triangle-free.
Since the graph was made up of edge-disjoint triangles, it has at most $\epsilon n^2$ edges.

A notable application of the diamond-free lemma is the graph theoretic proof of Roth's theorem on 3-term arithmetic progressions by Ruzsa and Szemer\'edi~\cite{RS09}. 
In fact, this application was one of the original motivations for the triangle removal lemma.
Solymosi~\cite{Sol03} also used the diamond-free lemma to give a short proof of the corners theorem of Ajtai and Szemer\'edi~\cite{AS74}.
The best known lower bound on $N_{DFL}(\epsilon)$ has the form $(1/\epsilon)^{c \log(1/\epsilon)}$, which arises from the Behrend construction of large sets without 3-term arithmetic progressions (for recent improvements on the constant $c$ coming from improved lower bound constructions related to the corners theorem, see \cite{LS,Green-corner}).

Here is a representative case of our main result.
It gives an exponential lower bound for the triangle-free lemma in terms of the bounds in the diamond-free lemma.

\begin{theorem} \label{thm:DFL-TFL}
	There exists a constant $C > 0$ such that, for every $\epsilon > 0$,
	\[
	  M_{TFL}(\epsilon) \ge e^{\epsilon N_{DFL}(C \epsilon)/C}.
	\]
\end{theorem}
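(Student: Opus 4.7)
The plan is to start from a near-extremal graph for the diamond-free lemma: set $n=N_{DFL}(C\epsilon)-1$ and take $G$ to be an $n$-vertex graph with every edge in a unique triangle and $|E(G)|>C\epsilon n^2$. From $G$ we build an auxiliary triangle-free graph $H$ whose $\epsilon$-approximate homomorphisms into a triangle-free $F$ are forced to have $|V(F)|\ge e^{\epsilon n/C}$.

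A natural first attempt is the bipartite double cover $V(H)=V(G)\times\{0,1\}$, with $(u,0)\sim(v,1)$ iff $uv\in E(G)$; this $H$ is bipartite, hence triangle-free. Given an $\epsilon$-approximate homomorphism $\phi:V(H)\to V(F)$, write $\phi_0,\phi_1:V(G)\to V(F)$ for the two slices, and call a triangle $\{u,v,w\}$ of $G$ \emph{preserved} if all six $H$-edges $(x,i)(y,1-i)$ corresponding to the triangle are mapped into $E(F)$. The unique-triangle hypothesis, combined with a crude edge-budget argument (each violated $H$-edge lies in a unique $G$-triangle), gives that the number of preserved triangles is at least $|E(G)|/3-O(\epsilon n^2)\ge \Omega(\epsilon n^2)$ for $C$ large. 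For each preserved triangle the six images $\phi_i(x)$ realize a bipartite $K_{3,3}$ minus the identity matching $\{(\phi_0(x),\phi_1(x))\}_{x\in\{u,v,w\}}$ inside $F$ (the ``diagonal'' pair cannot be present for all three $x$'s without creating a triangle in $F$), and bounding the number of such configurations in any $M$-vertex graph by $O(M^6)$ yields only the polynomial lower bound $M\ge(\epsilon n^2)^{\Omega(1)}$.

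The exponential improvement to $M\ge e^{\epsilon n/C}$ in the theorem comes from replacing the bipartite double cover by a richer lift, for instance $V(H)=V(G)\times\mathbb{F}_2^k$ with $k$ proportional to $\epsilon n/C$, whose edges are labelled by a map $\alpha:E(G)\to\mathbb{F}_2^k$ (producible by a union-bound argument once $2^k$ exceeds the number of triangles of $G$) that makes the lift triangle-free. Each preserved triangle of $G$ then imposes a $K_{3,3}$-minus-matching constraint in $F$ per fiber of the lift, yielding of order $2^k$ constraints per triangle. The technical heart of the argument is a \emph{slice-independence} step showing that the $2^k$ fibers of $\phi$ cannot collapse onto a substantially smaller family of maps without exceeding the $\epsilon$-approximation budget; the diamond-free (unique-triangle) rigidity of $G$ is what rules out such collapses. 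Once slice-independence is in place, $F$ must encode $\ge 2^k$ essentially-distinct configurations, forcing $M\ge 2^{\Omega(k)}=e^{\Omega(\epsilon n/C)}$, which is the theorem's bound. The main obstacle is precisely this slice-independence step: balancing the density of $H$ against the rigidity of the per-fiber constraints, and verifying that independence across fibers can be propagated from the unique-triangle structure of $G$.
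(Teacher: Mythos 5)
You have the right strategy at the top level---start from a near-extremal diamond-free graph $G$, build a triangle-free ``lift'' of $G$, and show that any approximate homomorphism from the lift into a small triangle-free graph $F$ must fail badly---and your diagnosis that the bipartite double cover is too weak is correct. But the step you yourself flag as ``the main obstacle,'' the slice-independence claim, is exactly where the proof lives, and you have not supplied it; moreover, the specific lift you propose differs from the paper's construction in a way that matters.

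The paper's construction (\cref{con:HG}) does not use a voltage-graph lift with $k\approx \epsilon n$ fiber coordinates. Instead it uses one binary coordinate \emph{per triangle} of $G$: if $G$ has $m$ triangles $H_1,\dots,H_m$, then $V(G')=V(G)\times\{0,1\}^m$ with $m=\Theta(\epsilon n^2)$ coordinates, and the edges of $G'$ lying over the $i$-th triangle are switched on only when the two endpoints' $i$-th coordinates agree and equal a prescribed bit $s\in\{0,1\}$ determined by a fixed nontrivial bipartition of $E(H_i)$. Triangle-freeness of $G'$ is then a direct deterministic consequence of connectivity of the triangle (no union bound over $\alpha$ needed), and the crucial ``rigidity'' step is handled by an entropy computation: for a uniform random fiber point $u\in U_v$, one bounds
\[
\sum_{i=1}^m I\bigl(X_i;\,\phi(u)\bigr)\ \le\ H\bigl(\phi(u)\bigr)\ \le\ \log|V(F)|,
\]
where $X_i$ is the $i$-th coordinate of $u$. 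Separately, a per-triangle argument (using a total-variation comparison, \cref{lem:good-tv}, between drawing uniformly from the fiber and drawing from a nearly-bisected $\phi$-preimage) shows that whenever all three vertices of $H_i$ have small mutual information $I_{i,v}$, a constant fraction of the $G'$-edges over $H_i$ are violated. Since at most $\epsilon$-fraction of edges may be violated, most triangles contribute $\Omega(1)$ to $\sum_{v,i} I_{i,v}$, forcing $\log|V(F)|\gtrsim m/n\approx \epsilon n$. This chain of inequalities is precisely the ``slice-independence'' you are hoping for, and without it your argument does not close. Your remark that ``$F$ must encode $\ge 2^k$ essentially-distinct configurations'' and hence $M\ge 2^{\Omega(k)}$ is also too quick: a priori $F$ only needs roughly $k$ bits of information capacity to distinguish $2^k$ things, and $\log|V(F)|\ge\Omega(k)$ is what you need but have not established; the entropy inequality above is exactly what upgrades ``many constraints'' into a logarithmic lower bound on $|V(F)|$. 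Finally, note that because the number of fiber coordinates in the paper is $m\approx \epsilon n^2$ rather than your $k\approx\epsilon n$, and the final bound comes from $m/n$, choosing $k$ proportional to $\epsilon n$ up front is not clearly enough slack to make the averaging over triangles work; tying coordinates to triangles one-to-one is what makes the bookkeeping and the entropy bound line up cleanly.
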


Using the best known lower bound on $N_{DFL}(\epsilon)$, we deduce the following superexponential lower bound on $M_{TFL}(\epsilon)$ in terms of $1/\epsilon$.

\begin{corollary} There exists a constant $c>0$ such that for all $0 < \epsilon < 1/2$,
\[
M_{TFL}(\epsilon) 
\ge 
e^{(1/\epsilon)^{c\log(1/\epsilon)}}.
\]
\end{corollary}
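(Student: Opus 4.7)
The plan is to combine \cref{thm:DFL-TFL} directly with the Behrend-type lower bound
\[
N_{DFL}(\epsilon) \ge (1/\epsilon)^{c_0 \log(1/\epsilon)},
\]
mentioned in the discussion preceding \cref{thm:DFL-TFL}, where $c_0 > 0$ is an absolute constant and the bound is valid for all sufficiently small $\epsilon$. Beyond this substitution, only a short asymptotic check remains.

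Substituting the Behrend-type lower bound for $N_{DFL}(C\epsilon)$ into \cref{thm:DFL-TFL}, one obtains
\[
M_{TFL}(\epsilon) \ge \exp\!\paren{\frac{\epsilon}{C}\paren{\frac{1}{C\epsilon}}^{c_0 \log(1/(C\epsilon))}}
\]
for all sufficiently small $\epsilon$. It then suffices to verify that the exponent on the right is at least $(1/\epsilon)^{c \log(1/\epsilon)}$ for some positive $c < c_0$. Writing $y = \log(1/\epsilon)$ and applying $\log$ twice, the inequality to be checked reduces to
\[
c_0 (y - \log C)^2 - y - \log C \ge c y^2,
\]
which holds for any $c < c_0$ once $y$ exceeds a threshold depending only on $c_0$ and $C$, since the shift $-\log C$ inside the square and the subtracted linear term $-y - \log C$ are both lower-order corrections to the dominant $c_0 y^2$.

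For the remaining bounded range of $\epsilon$, the bound can be made to hold by shrinking the constant $c$. I do not foresee any real obstacle: the argument is a direct substitution followed by an elementary asymptotic bookkeeping. The only mild subtlety is ensuring that the factor $\epsilon/C$ outside and the shift from $\epsilon$ to $C\epsilon$ inside the Behrend exponent do not spoil the double-logarithmic savings, which is clear from the calculation above.
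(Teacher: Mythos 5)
Your approach is exactly the paper's (implicit) intended proof: substitute the Behrend-type lower bound on $N_{DFL}$ into \cref{thm:DFL-TFL} and perform the elementary asymptotic bookkeeping, and your reduction to
$c_0(y-\log C)^2 - y - \log C \ge c y^2$ with $y=\log(1/\epsilon)$ is correct and does hold for any $c<c_0$ once $y$ is large.

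One caveat about the final step, which is really a latent issue in the corollary as stated rather than a flaw in your reasoning. For the ``remaining bounded range'' you propose to shrink $c$, but that cannot work near $\epsilon=1/2$: since $(1/\epsilon)^{c\log(1/\epsilon)} = e^{c(\log(1/\epsilon))^2} > 1$ for every $c>0$ and $\epsilon<1$, the claimed lower bound always exceeds $e$, yet by Mantel's theorem any triangle-free graph on $n$ vertices has at most $n^2/4$ edges, so for $\epsilon \ge 1/4$ the constant map to a single vertex is already an $\epsilon$-approximate homomorphism to a triangle-free graph, giving $M_{TFL}(\epsilon)=1$. So the inequality genuinely fails on $[1/4,1/2)$ no matter how small $c$ is, and the corollary should be read as holding for all sufficiently small $\epsilon$ (or with the range restricted, e.g.\ to $0<\epsilon<1/5$), which your argument does establish. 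The substance of your proof --- the substitution and the double-logarithmic comparison --- is right and is what the paper intends.
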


We suspect that $N_{DFL}(\epsilon)$ and $1/\delta_{TRL}(\epsilon)$ have similar growth. The next result provides evidence for this suspicion. We show that if $N_{DFL}(\epsilon)$ grows subexponentially in $\epsilon^{-1}$, then $1/\delta_{TRL}(\epsilon)$ does as well.
The proof of the theorem is based on a similar proof in the arithmetic setting by Fox and Lov\'asz \cite{FL17} but uses vertex subset sampling instead of subspace sampling. 

\begin{theorem}\label{theoremdiamondtoremoval}
Fix $0<c<1$. If $N_{DFL}(\epsilon) \leq 2^{\epsilon^{-c+o(1)}}$ as $\epsilon \to 0$, then $\delta_{TRL}(\epsilon) \geq 2^{-\epsilon^{-c/(1-c)+o(1)}}$ as $\epsilon \to 0$. 
\end{theorem}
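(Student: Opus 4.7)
I argue by contrapositive. Suppose $G$ is an $n$-vertex graph with fewer than $\delta n^3$ triangles but triangle transversal $\tau_{\triangle}(G) > \epsilon n^2$, and derive a lower bound on $\delta$ in terms of $N_{DFL}$.

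\emph{Step 1 (edge-disjoint triangle packing).} From $\tau_{\triangle}(G) \le 3\nu_{\triangle}(G)$ we obtain an edge-disjoint packing $T_1,\ldots,T_N \subseteq G$ with $N \ge \epsilon n^2/3$. Let $H = \bigcup_i T_i$; it has $|E(H)| = 3N$, each edge of $H$ belongs to a unique ``canonical'' triangle $T_i$, and $H$ has at most $\delta n^3$ triangles in total.

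\emph{Step 2 (vertex sampling and isolated canonicals).} Sample $S \subseteq V(H)$ by including each vertex independently with probability $p \asymp \epsilon/(\delta n)$, so $|S| \asymp \epsilon/\delta$. First-moment computations give: (i) the expected number of canonical $T_i \subseteq S$ is $Np^3$; (ii) the expected number of ordered pairs $(T_i, T')$ of triangles of $H$ sharing an edge with both $T_i, T' \subseteq S$ is at most $3(t(H) - N)p^4 \le 3\delta n^3 p^4$. Estimate (ii) uses the identity $\sum_{T_i \text{ canonical}} |\{T' \ne T_i : T' \text{ shares an edge with } T_i\}| = 3(t(H) - N)$, which follows from each edge of $H$ being in exactly one canonical triangle. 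For our choice of $p$, (ii) is $o(Np^3)$, so with positive probability at least half of the canonical $T_i \subseteq S$ are \emph{isolated} in $H[S]$ (no other triangle of $H[S]$ shares an edge with them). The subgraph $H^\star \subseteq H[S]$ formed by these isolated canonicals has each edge lying in exactly one triangle of $H^\star$, so the hypothesis of \cref{thm:DFL} is satisfied.

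\emph{Step 3 (apply the diamond-free lemma).} With $n_0 := |V(H^\star)| \le |S| \asymp \epsilon/\delta$ and $\eta := |E(H^\star)|/n_0^2 \asymp \epsilon p \asymp \epsilon^2/(\delta n)$, the diamond-free lemma forces $n_0 \le N_{DFL}(\eta)$. Plugging in $N_{DFL}(\eta) \le 2^{\eta^{-c + o(1)}}$ rearranges to
\[
\log(\epsilon/\delta) \;\lesssim\; \bigl(\delta n/\epsilon^2\bigr)^{c + o(1)}.
\]
For counterexamples with $n$ in the range $\epsilon/(3\delta) \le n \lesssim (\epsilon^2/\delta)(\log(\epsilon/\delta))^{1/c}$, this yields a direct contradiction whenever $\delta$ is too small.

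\emph{Step 4 (iteration for large $n$).} The inequality above is effective only for bounded $n$. To handle counterexamples of larger size we iterate the construction: pass to a suitably chosen induced subgraph (of $G$, or of $H^\star$ viewed as a new ``bad'' instance with updated triangle-density and transversal-density parameters) and rerun Steps 1--3. Following the strategy of Fox and Lov\'asz in the arithmetic setting, the parameters $(\epsilon_k, \delta_k, n_k)$ along the iteration can be tracked, and each iteration compounds the effective exponent by a factor of $c$. Summing the geometric progression $c + c^2 + c^3 + \cdots = c/(1-c)$ produces the claimed bound $\delta_{TRL}(\epsilon) \ge 2^{-\epsilon^{-c/(1-c) + o(1)}}$.

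\emph{Main obstacle.} The principal difficulty is Step 4. Vertex sampling preserves triangle density in a controlled way but does not preserve the triangle transversal density as cleanly as subspace sampling preserves affine structure in the arithmetic setting; one therefore cannot simply shrink $n$ while retaining a ``bad'' instance at the same $\epsilon$. The heart of the proof is to set up the iteration so that (a) at each stage the DFL hypothesis can still be meaningfully invoked on the iterated instance, and (b) the accumulated $o(1)$ losses at each stage do not spoil the telescoping of the geometric series to $c/(1-c)$.
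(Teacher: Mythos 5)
Your Steps 1--3 implement a vertex-sampling argument in the spirit of the paper's \cref{lem2}, but the proposal is missing the crucial preprocessing step (the paper's \cref{lem1}), and as a result you are driven toward an iteration (your Step~4, which you acknowledge is unresolved) that the paper does not need. The paper's argument is a \emph{one-shot} argument. First, one greedily deletes the edge lying in the most triangles, producing a subgraph with $\alpha n^3$ triangles (for some $0 < \alpha \le \delta$) in which every edge lies in at most $t := g(\alpha/\delta)\,\alpha n/\epsilon$ triangles, where $g(x) = 100\log(100/x)(\log\log(100/x))^2$ is chosen precisely so that $\sum_i 1/g(2^{-i}) < 1/2$, guaranteeing that fewer than $\epsilon n^2$ edges are deleted overall. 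One then samples a vertex subset of size $N = n/(9t)$. Because $t$ scales linearly in $n$ at fixed $\alpha,\delta,\epsilon$, the diamond-free graph you land on has $N = \epsilon/(9g\alpha)$ vertices and edge density $\epsilon_0 := \epsilon/(9g)$ --- both independent of $n$. This is exactly what your construction lacks: by first passing to an edge-disjoint packing and sampling with $p \asymp \epsilon/(\delta n)$, you end up with a DFL instance at density $\eta \asymp \epsilon^2/(\delta n)$, and the $n$-dependence of $\eta$ is what makes the constraint vacuous for large $n$. Note also that passing to an edge-disjoint packing discards information: the paper keeps \emph{all} of the $\alpha n^3$ triangles (not just an edge-disjoint subfamily) and relies on the bounded multiplicity $t$ to control overlaps during sampling, which is why its final edge count scales as $\alpha N^3$ rather than your $\epsilon n^2 p^3$.

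Your intuition for the exponent $c/(1-c)$ is also off. In the paper it does not come from a geometric series over iterations; it comes from a single optimization over the ``free'' parameter $g$ (which the adversary implicitly controls via the edge-deletion process). The DFL hypothesis gives $N_{DFL}(\epsilon_0) \ge N = \epsilon_0/\alpha$, hence $\alpha^{-1} \le \epsilon_0^{-1} 2^{\epsilon_0^{-c+o(1)}}$; combined with $\delta/\alpha = 2^{g^{1-o(1)}}$ (forced by the form of $g$), this yields $\delta^{-1} \le 2^{(9g/\epsilon)^{c+o(1)} - g^{1-o(1)}}$, and maximizing the exponent over $g$ gives $g \asymp \epsilon^{-c/(1-c)}$ and the claimed bound. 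So the genuine gap in your proposal is Step 4, and you are right that it is the principal difficulty: vertex sampling does not preserve the transversal density in a way that would let the parameters telescope. The fix is not to iterate but to insert the degradation lemma before sampling, which removes the $n$-dependence at the source. (The Fox--Lov\'asz arithmetic argument the paper adapts is likewise a one-shot argument built on the analogous degradation step with subspace sampling.)
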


If $N_{DFL}(\epsilon)$ and $1/\delta_{TRL}(\epsilon)$ have similar growth (as is the case if $N_{DFL}(\epsilon)$ grows subexponentially by Theorem \ref{theoremdiamondtoremoval}), then \cref{thm:DFL-TFL} and the inequality \cref{eq:TFL-TRL} would give comparable lower and upper bounds on $M_{TFL}(\epsilon)$.
Below we also discuss the arithmetic analogue, in which case the best lower and upper bounds indeed match.

Here is the proof strategy for \cref{thm:DFL-TFL}. We start with a graph satisfying the hypotheses of the diamond-free lemma, namely that every edge lies in a unique triangle.
We blow up this graph and then carefully construct a triangle-free subgraph.
By the triangle-free lemma, this final graph we constructed must have an $\epsilon/C$-approximate homomorphism to a triangle-free graph on $M_{TFL}(\epsilon/C)$ vertices, which then implies,  by a novel entropy argument, that the original graph has at most $C\epsilon^{-1} \log M_{TFL}(\epsilon/C)$ triangles.

\medskip

We state below extensions of the triangle removal lemma, the triangle-free lemma, and the diamond-free lemma from a triangle to an arbitrary graph $H$. 
These results are standard in the area, and their proofs use the same techniques as the triangle case.

Although some of these results are commonly stated in terms of $H$-free graphs (with caveats), it will be more natural and relevant for us to discuss them using the following formulations with $H$-homomorphism-free graphs.
We say that a graph $G$ is \emph{$H$-homomorphism-free} if there is no graph homomorphism from $H$ to $G$. A \emph{homomorphic copy} of $H$ in $G$ is a subgraph of $G$ that is the image of a homomorphism from $H$.
The \emph{core} of a graph $H$, denoted $\core(H)$, is defined to be the smallest subgraph of $H$ that can arise as the image of a homomorphism of $H$ (see \cite{HN92}). The core of $H$ is well-defined, i.e., it is unique up to graph isomorphism. 
Indeed, suppose $\phi, \psi \colon H \to H$ are both homomorphisms with images $\phi(H)$ and $\psi(H)$,
then $\psi$ gives a homomorphism from $\phi(H)$ to $\psi(H)$, and vice-versa with $\phi$, so that the two images cannot both be minimal homomorphic copies of $H$ unless they are isomorphic. 
For example, if $H$ is a clique or an odd cycle, then $\core(H) = H$. Also, the core of $H$ consists of a single edge if and only if $H$ is bipartite and has at least one edge.

\begin{theorem} \label{thm:H3}
Let $H$ be a graph. Let $\epsilon > 0$.
	\begin{enumerate}
	\item[(a)]
	There exists $\delta > 0$ such that every $n$-vertex graph with fewer than $\delta n^{\abs{V(H)}}$ homomorphic copies of $H$ can be made $H$-homomorphism-free by removing at most $\epsilon n^2$ edges.  
	
	\item[(b)]
	There exists some $M$ such that every $H$-homomorphism-free graph has an $\epsilon$-approximate homomorphism to an $H$-homomorphism-free graph on at most $M$ vertices.

	\item[(c)]
	Further suppose that $H$ is connected and non-bipartite.
	There exists some $N$ such that for every $n$-vertex graph $G$ with $n \ge N$,
	if every edge of $G$ lies in a unique homomorphic copy of $\core(H)$, then $G$ has at most $\epsilon n^2$ edges.
	\end{enumerate}
\end{theorem}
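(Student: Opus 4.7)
Parts (a) and (b) follow the standard regularity-method template, while (c) will be deduced from (a) applied to $\core(H)$.

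For (a), the plan is to apply Szemer\'edi's regularity lemma to $G$ at scale $\epsilon' \ll \epsilon$ to obtain a partition $V(G) = V_1 \cup \ldots \cup V_k$, then clean $G$ by removing all edges inside parts, inside $\epsilon'$-irregular pairs, and inside pairs of density below a threshold $\eta$. For suitable $\epsilon'$ and $\eta$ depending on $\epsilon$ and $\abs{V(H)}$, the total number of removed edges is at most $\epsilon n^2$. If any homomorphism $\psi \colon H \to G'$ survives in the cleaned graph $G'$, then every edge of the image $\psi(H)$ lies between a regular pair of density at least $\eta$, so the standard counting lemma produces $\Omega(n^{\abs{V(H)}})$ homomorphisms $H \to G$ whose images use the same parts of the partition as $\psi$, and hence $\Omega(n^{\abs{V(H)}})$ homomorphic copies. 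Choosing $\delta > 0$ small enough in terms of $\epsilon$ and $\abs{V(H)}$ contradicts the hypothesis, forcing $G'$ to be $H$-homomorphism-free.

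For (b), the plan is to run the same regularity cleanup on the given $H$-homomorphism-free graph $G$ and to form the reduced graph $F$ on vertex set $[k]$ with $ij \in E(F)$ precisely when $(V_i, V_j)$ is $\epsilon'$-regular of density at least $\eta$; the map $\phi \colon V(G) \to V(F)$ sending each vertex of $V_i$ to $i$ fails only on edges removed during the cleanup, so it is an $\epsilon$-approximate homomorphism. If $F$ contained a homomorphic copy of $H$, the counting lemma applied to the corresponding parts would produce at least one homomorphism $H \to G$, contradicting $H$-homomorphism-freeness of $G$; hence $F$ is $H$-homomorphism-free, and $M = k$ suffices.

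For (c), let $K = \core(H)$; since $H$ is connected and non-bipartite, so is $K$, and in particular $K$ contains an odd cycle and $\abs{V(K)} \geq 3$. Apply (a) to $K$ with parameter $\epsilon/\abs{E(K)}$ to obtain $\delta > 0$, and choose $N$ so that $n^2/2 < \delta n^{\abs{V(K)}}$ for all $n \geq N$. The uniqueness hypothesis says the homomorphic copies $K_1', \ldots, K_T'$ of $K$ in $G$ are edge-disjoint and together cover $E(G)$, so $T \leq \abs{E(G)} \leq n^2/2 < \delta n^{\abs{V(K)}}$, and (a) allows us to remove at most $(\epsilon/\abs{E(K)}) n^2$ edges to make $G$ $K$-homomorphism-free. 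Any such deletion must hit each edge-disjoint $K_i'$, so $T \leq (\epsilon/\abs{E(K)}) n^2$; combined with $\abs{E(G)} = \sum_i \abs{E(K_i')} \leq T \cdot \abs{E(K)}$, we conclude $\abs{E(G)} \leq \epsilon n^2$. The main technical step is verifying the counting lemma for homomorphic (rather than subgraph) copies in (a) and (b); part (c) is a short reduction whose only subtlety is that a homomorphic copy of $K$ may have fewer vertices than $K$ itself, which is safely handled by the bound $\abs{E(K_i')} \leq \abs{E(K)}$.
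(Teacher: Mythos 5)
Your proof is correct and follows the standard regularity-method route that the paper explicitly defers to (``These results are standard in the area, and their proofs use the same techniques as the triangle case''). Parts (a) and (b) are the usual Szemer\'edi regularity cleanup plus the embedding/counting lemma (which, as you note, is actually slightly simpler for homomorphic copies than for subgraph copies, since injectivity is not required), and (c) is the correct generalization of the paper's own deduction of the diamond-free lemma from the triangle removal lemma in \cref{sec:intro}: there the paper uses parameter $\epsilon/3$, which is exactly your $\epsilon/\abs{E(K)}$ specialized to $K = K_3$. Two details you handle correctly and are worth keeping explicit: (i) since $H$ is connected and non-bipartite, so is $\core(H)$, hence $\abs{V(\core(H))} \ge 3$, which is what lets you choose a finite $N$ with $n^2/2 < \delta n^{\abs{V(K)}}$; and (ii) a homomorphic image of $K$ may have fewer than $\abs{E(K)}$ edges, so the bound $\abs{E(K'_i)} \le \abs{E(K)}$ is the right inequality to use. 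One point you could tighten: in (a) the counting lemma naturally lower-bounds the number of \emph{homomorphisms} $H \to G$, and you pass to homomorphic \emph{copies} by noting that each subgraph on at most $\abs{V(H)}$ vertices is the image of at most $\abs{V(H)}^{\abs{V(H)}}$ homomorphisms, a constant depending only on $H$, so the $\Omega(n^{\abs{V(H)}})$ bound survives; this is implicit in your writeup but deserves a sentence. Your approach gives the tower-type bound on $M$ in (b), which is sufficient for the statement as posed; the paper's \cref{thm:main-graph} later improves this via the Frieze--Kannan weak regularity lemma, but that is not needed here.
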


\begin{definition}
	Let $\delta_H(\epsilon)$, $M_H(\epsilon)$, and $N_H(\epsilon)$ denote the optimal constants $\delta$, $M$, and $N$, respectively, in \cref{thm:H3}.
\end{definition}

Now we state our results comparing the bounds in \cref{thm:H3}, extending the earlier inequality \cref{eq:TFL-TRL} and \cref{thm:DFL-TFL} from triangles to general $H$. 
The lower bound is new.
The upper bound below was already proved in \cite{HKLLS20}, though we sketch a proof in \cref{sec:TFL-TRL}.

\begin{theorem}[Main theorem for graphs] \label{thm:main-graph}
	For every connected non-bipartite graph $H$, there is some constant $C = C_H > 0$ such that, for every $0 < \epsilon < 1$,
	\[
	e^{\epsilon N_{H}(C \epsilon)/C} \le M_H(\epsilon) \le e^{C\delta_H(\epsilon/C)^{-2}}.
	\]
\end{theorem}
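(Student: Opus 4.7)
The plan is to establish the two inequalities separately.

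For the upper bound $M_H(\epsilon) \le e^{C\delta_H(\epsilon/C)^{-2}}$, I would follow the deduction of \cite{HKLLS20} that yields \cref{eq:TFL-TRL} for triangles and is sketched in \cref{sec:TFL-TRL}. Starting from an $H$-homomorphism-free graph $G$, apply the Frieze--Kannan weak regularity lemma~\cite{FK99} with cut-norm error polynomial in $\delta_H(\epsilon/C)$ to produce a partition of $V(G)$ into $M = e^{O(\delta_H(\epsilon/C)^{-2})}$ parts. Round the inter-part densities to $\{0,1\}$ to obtain a reduced graph $F$ on $M$ vertices, and verify that the natural quotient map $V(G) \to V(F)$ is an $\epsilon$-approximate homomorphism. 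Then invoke \cref{thm:H3}(a) applied to $G$ to deduce that $F$ is itself $H$-homomorphism-free: if $F$ contained a homomorphic copy of $H$, weak regularity would force $\delta_H(\epsilon/C)\, n^{|V(H)|}$ homomorphic copies of $H$ in $G$, contradicting that $G$ has none.

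For the lower bound $M_H(\epsilon) \ge e^{\epsilon N_H(C\epsilon)/C}$, I would extend the strategy of \cref{thm:DFL-TFL} from triangles to arbitrary connected non-bipartite $H$. By the extremal definition of $N_H$, pick $n$ just below $N_H(C\epsilon)$ and a graph $G$ on $n$ vertices in which every edge lies in a unique homomorphic copy of $\core(H)$ but $|E(G)| > C\epsilon n^2$. The heart of the argument is to construct from $G$ an auxiliary $H$-homomorphism-free graph $G^*$ via a $k$-partite blow-up along $\core(H)$, where $k = |V(\core(H))|$: take copies $V_1, \ldots, V_k$ of $V(G)$ indexed by the vertices of $\core(H)$, and for each homomorphic copy $\psi \colon V(\core(H)) \to V(G)$ and each edge $v_iv_j$ of $\core(H)$, insert into $G^*$ the edge between $(\psi(v_i), i) \in V_i$ and $(\psi(v_j), j) \in V_j$. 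The uniqueness hypothesis, combined with the rigidity of $\core(H)$ (which admits no proper retract), should force any homomorphism $H \to G^*$ to descend to a $\core(H)$-homomorphism into a single blown-up $\core(H)$-copy in $G$; connectedness and non-bipartiteness of $H$ then rule this out.

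By the definition of $M_H$, the graph $G^*$ admits an $(\epsilon/C)$-approximate homomorphism $\phi$ into some $H$-homomorphism-free graph $F$ on at most $M_H(\epsilon/C)$ vertices, inducing $k$ maps $\phi_i \colon V(G) \to V(F)$. The final step is a Shannon-entropy argument, analogous to the novel one advertised for the triangle case: encoding each vertex $u \in V(G)$ by the tuple $(\phi_1(u), \ldots, \phi_k(u)) \in V(F)^k$ and analyzing the joint entropy over random $\core(H)$-copies of $G$, one bounds the number of homomorphic $\core(H)$-copies in $G$ by $O(\epsilon^{-1} \log M_H(\epsilon/C))$. Since edges of $G$ are in bijection (up to the constant factor $|E(\core(H))|$) with $\core(H)$-copies, this gives $|E(G)| = O(\epsilon^{-1} \log M_H(\epsilon/C))$; combined with $|E(G)| > C\epsilon n^2$, this forces $\log M_H(\epsilon/C) = \Omega(\epsilon^2 n^2) \ge \Omega(\epsilon^2 N_H(C\epsilon)^2)$, which after reabsorbing constants is at least as strong as the claimed $\log M_H(\epsilon) \ge \epsilon N_H(C\epsilon)/C$.

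The main obstacle is the interplay between the construction of $G^*$ and the entropy argument. One must design the blow-up so that $H$-homomorphisms are precisely precluded --- this is where the connectedness, non-bipartiteness, and core structure of $H$ enter essentially (cores of bipartite graphs reduce to a single edge, which would trivialize the $N_H$ statement) --- and then extract a bound logarithmic in $M_H$, while tolerating the $\epsilon/C$-fraction of edges of $G^*$ on which $\phi$ fails to be a true homomorphism. Verifying that this specific blow-up is $H$-homomorphism-free seems the most delicate part, and carrying out the entropy calculation uniformly in a general $\core(H)$ (rather than just a triangle) will require tracking conditional entropies along the edges of $\core(H)$ in a compatible way.
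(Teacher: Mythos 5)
There are genuine gaps on both sides. For the upper bound, rounding the Frieze--Kannan densities to $\{0,1\}$ does not make the reduced graph $F$ automatically $H$-homomorphism-free. The counting lemma only shows the reduced graph has small $H$-homomorphism \emph{density}, not zero; a single homomorphic copy of $H$ in $F$ (with all rounded densities $\geq\tau$) contributes only about $\tau^{|E(H)|}(n/M)^{|V(H)|}$ to the count, so to derive a contradiction you would need the weak regularity error $\gamma$ to satisfy $\gamma M^{|V(H)|} < \tau^{|E(H)|}$, which is hopeless since $M = e^{\Theta(\gamma^{-2})}$. The paper instead keeps the weighted reduced graph $G/\mathcal{P}$, shows its $H$-homomorphism density is at most $O(\gamma)$, and applies a \emph{weighted removal lemma} on the $M$-vertex reduced graph itself, deleting total weight $\epsilon$ to genuinely kill all $H$-homomorphic copies; that deletion corresponds to at most $\epsilon n^2$ edges of $G$. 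This forces $\gamma \approx \delta_H(\epsilon/C)$, which is where the exponent $\delta_H(\epsilon/C)^{-2}$ comes from.

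For the lower bound, the proposed $k$-partite blow-up $G^*$ is not $H$-homomorphism-free. Already for $H = K_3$, taking any single homomorphism $\psi$ onto a triangle of $G$, the three vertices $(\psi(v_1),1), (\psi(v_2),2), (\psi(v_3),3)$ are pairwise adjacent in $G^*$, so $G^*$ contains triangles. The paper's \cref{con:HG} is of a different shape: it is a subgraph of the $2^m$-blow-up where $m$ is the number of $\core(H)$-copies; each copy $H_i$ owns its own binary coordinate, $E(H_i)$ is split into two nonempty halves $H_i^{(0)}, H_i^{(1)}$, and an edge lies in $G'$ only when both endpoints agree in their $i$-th coordinate with the label $s$ of the half containing the underlying edge. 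Then a homomorphism $H \to G'$ would pin down a single $H_i$ (by uniqueness of the copy containing each edge) and would force the $i$-th coordinate to be simultaneously $0$ and $1$ along $H_i$, which connectedness of $H$ rules out (\cref{lem:G'-H-hom-free}). Also, the advertised entropy conclusion is mis-stated: the paper's argument gives $\log|V(F)| \gtrsim m/n$, i.e.\ $m \lesssim n\log M_H(\epsilon/C)$, and combined with $m \gtrsim \epsilon n^2$ this yields the linear bound $\log M_H(\epsilon/C) \gtrsim \epsilon n$, not the quadratic $\Omega(\epsilon^2 n^2)$ asserted in the proposal.
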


\subsection{Arithmetic analogue}

Green~\cite{Gre05} developed an arithmetic analogue of Szemer\'edi's graph regularity lemma and used it to prove the following arithmetic analogue of the triangle removal lemma. 

Let $G$ be an abelian group. Given $X,Y,Z \subseteq G$, a \emph{triangle} in $X \times Y \times Z$ is a triple $(x,y,z) \in X \times Y \times Z$ with $x+y+z = 0$.

\begin{theorem}[Arithmetic triangle removal lemma] \label{thm:arith-TRL}
	For every $\epsilon > 0$, there exists $\delta > 0$ such that for every finite abelian group $G$, and subsets $X,Y,Z \subseteq G$ with fewer than $\delta \abs{G}^2$ triangles in $X\times Y \times Z$, we can remove all triangles by deleting at most $\epsilon \abs{G}$ elements from each of $X,Y,Z$.
\end{theorem}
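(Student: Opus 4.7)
The plan is to reduce the arithmetic statement to the graph triangle removal lemma via a Cayley-type tripartite construction, and then to exploit translation symmetry to convert a graph edge-removal into an element-removal in $X, Y, Z$.

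First I would build a tripartite graph $\Gamma$ on $V_1 \sqcup V_2 \sqcup V_3$ with $V_1 = V_2 = V_3 = G$, declaring $v_1 v_2$ an edge iff $v_2 - v_1 \in X$, $v_2 v_3$ an edge iff $v_3 - v_2 \in Y$, and $v_3 v_1$ an edge iff $v_1 - v_3 \in Z$. The triangles of $\Gamma$ are in $\abs{G}$-to-$1$ correspondence with the arithmetic triangles in $X \times Y \times Z$: each $(x,y,z)$ with $x+y+z=0$ yields the $\abs{G}$ graph triangles $(v_1,\, v_1+x,\, v_1+x+y)$ as $v_1$ ranges over $G$. So fewer than $\delta \abs{G}^2$ arithmetic triangles forces fewer than $\delta \abs{G}^3 = (\delta/27)(3\abs{G})^3$ graph triangles. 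Applying \cref{thm:TRL} with parameter $\epsilon' := \epsilon/27$ and setting $\delta := 27\,\delta_{TRL}(\epsilon/27)$, I obtain an edge set $E' \subseteq E(\Gamma)$ with $\abs{E'} \le \epsilon'(3\abs{G})^2 = 9\epsilon'\abs{G}^2$ such that $\Gamma \setminus E'$ is triangle-free.

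The core step is then to convert $E'$, which a priori need not respect the diagonal $G$-translation symmetry of $\Gamma$, into a translation-invariant removal. For each $x \in X$, let $m_X(x)$ be the number of edges $(v_1, v_2) \in E'$ with $v_2 - v_1 = x$, and define $m_Y, m_Z$ analogously. The key observation is that for every arithmetic triangle $(x,y,z)$ and every $v_1 \in G$, at least one of the three edges of the graph triangle $(v_1, v_1+x, v_1+x+y)$ must lie in $E'$; summing over the $\abs{G}$ choices of $v_1$ yields
\[
  m_X(x) + m_Y(y) + m_Z(z) \ge \abs{G}.
\]
Setting $X' := \{x \in X : m_X(x) \ge \abs{G}/3\}$ and $Y', Z'$ analogously, the pigeonhole principle forces every arithmetic triangle to meet $X' \cup Y' \cup Z'$, so deleting $X', Y', Z'$ from $X, Y, Z$ destroys all arithmetic triangles. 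A Markov-type estimate then gives $\abs{X'} \le 3 \sum_x m_X(x)/\abs{G} \le 27\epsilon'\abs{G} = \epsilon\abs{G}$, and likewise for $Y'$ and $Z'$, which is the desired bound.

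I do not expect any serious obstacle in this plan; the subtlest point is the translation-averaging argument encoded in the inequality $m_X(x)+m_Y(y)+m_Z(z) \ge \abs{G}$, which crucially uses that the graph $\Gamma$ was built from differences so that each $(x,y,z)$ corresponds to a full $G$-orbit of graph triangles. The remaining work is mild bookkeeping of sign conventions around $x+y+z=0$ and of the constants in the reduction.
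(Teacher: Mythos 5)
The paper states this theorem without proof, citing Green~\cite{Gre05} for the original Fourier-analytic proof and Kr\'al--Serra--Vena~\cite{KSV09} for the combinatorial deduction from the graph triangle removal lemma. Your argument is precisely the Kr\'al--Serra--Vena reduction, and it is correct. The tripartite Cayley graph, the $\abs{G}$-to-$1$ correspondence between arithmetic triangles and graph triangles, and most importantly the translation-averaging step converting an arbitrary removed edge set $E'$ into the translation-invariant sets $X', Y', Z'$ via the inequality $m_X(x)+m_Y(y)+m_Z(z)\ge\abs{G}$ and a Markov/pigeonhole bound --- all of this is the standard route and the constants check out: with $\epsilon'=\epsilon/27$ and $\delta=27\,\delta_{TRL}(\epsilon/27)$, one gets $\abs{E'}\le 9\epsilon'\abs{G}^2$, hence $\abs{X'}\le 3\cdot 9\epsilon'\abs{G}^2/\abs{G}=\epsilon\abs{G}$, and likewise for $Y',Z'$. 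One small remark: since you use subtraction freely (e.g.\ $v_2-v_1\in X$), the construction as written applies to abelian $G$, which is exactly the scope of the statement; extending to nonabelian groups (as noted in the paper for~\cite{KSV09}) requires a mild modification of the Cayley construction but the averaging idea is unchanged.
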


Green's proof was Fourier analytic.
It was later shown by Kr\'al, Serra, and Vena~\cite{KSV09} that the arithmetic triangle removal lemma actually follows from the triangle removal lemma for graphs and even extends to all groups.

Here is the arithmetic analogue of the diamond-free lemma. It is a corollary of the arithmetic triangle-free lemma. 

\begin{theorem}[Arithmetic diamond-free lemma] \label{thm:arith-DFL}
	For every $\epsilon > 0$, there exists $N$ such that for every finite abelian group $G$ with $\abs{G} \ge N$, and $x_1, \dots, x_l$, $y_1, \dots, y_l$, $z_1, \dots, z_l \in G$ satisfying $x_i + y_j + z_k = 0$ if and only if $i=j=k$, one has $l \le \epsilon \abs{G}$.
\end{theorem}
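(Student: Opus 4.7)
My plan is to deduce the arithmetic diamond-free lemma as a direct corollary of the arithmetic triangle removal lemma (\cref{thm:arith-TRL}), in exact parallel with the deduction of the graph diamond-free lemma (\cref{thm:DFL}) from the graph triangle removal lemma given earlier in this section.

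Given the data $x_1, \dots, x_l, y_1, \dots, y_l, z_1, \dots, z_l \in G$, set $X = \{x_1, \dots, x_l\}$, $Y = \{y_1, \dots, y_l\}$, $Z = \{z_1, \dots, z_l\}$. First I would observe that the ``only if'' direction of the hypothesis forces the $x_i$ to be pairwise distinct: if $x_i = x_{i'}$ with $i \ne i'$, then $x_{i'} + y_i + z_i = 0$ violates uniqueness. The same argument applies to the $y_i$ and $z_i$, so $\abs{X} = \abs{Y} = \abs{Z} = l$, and the triangles in $X \times Y \times Z$ (triples summing to $0$) are exactly the $l$ triples $(x_i, y_i, z_i)$. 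The crucial consequence of the full ``if and only if'' is that each element of $X \cup Y \cup Z$ lies in exactly one triangle; this is the arithmetic analogue of the ``each edge lies in a unique triangle'' property that drove the graph case.

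Now let $\delta > 0$ be the constant supplied by \cref{thm:arith-TRL} for the parameter $\epsilon/3$, and set $N := \lceil 1/\delta \rceil + 1$. For $\abs{G} \ge N$ the number of triangles in $X \times Y \times Z$ satisfies $l \le \abs{G} < \delta \abs{G}^2$, so \cref{thm:arith-TRL} applies and produces a set of at most $(\epsilon/3)\abs{G}$ elements removed from each of $X$, $Y$, $Z$ that destroys every triangle. Since each removal kills at most one of the $l$ triangles (by the uniqueness observation), the total number of removals is at least $l$, hence
\[
    l \;\le\; 3 \cdot (\epsilon/3) \abs{G} \;=\; \epsilon \abs{G},
\]
which is the conclusion.

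There is no real obstacle; the argument is essentially routine once one recognizes that the ``iff'' hypothesis provides exactly the ``unique triangle per element'' structure needed to transfer the removal-counting step from the graph proof. An alternative route would be to construct a tripartite Cayley-type graph on $3\abs{G}$ vertices whose triangles are in bijection with solutions to $x_i + y_j + z_k = 0$ and then apply \cref{thm:DFL} directly, but the direct arithmetic deduction above is cleaner and gives the same bound up to constants.
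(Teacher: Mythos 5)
Your proof is correct and is the standard deduction of the arithmetic diamond-free lemma from the arithmetic triangle removal lemma (\cref{thm:arith-TRL}), exactly paralleling the paper's deduction of the graph diamond-free lemma from the graph triangle removal lemma sketched in the introduction. The paper does not spell out a proof for \cref{thm:arith-DFL} itself but merely asserts that it is a corollary (the phrase ``arithmetic triangle-free lemma'' in the paper at that point appears to be a misprint for ``arithmetic triangle removal lemma,'' since \cref{thm:arith-TFL} is only stated later), and your argument supplies precisely the intended details.
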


The sets $\{x_1, \dots, x_l\}$, $\{y_1, \dots, y_l\}$, $\{z_1, \dots, z_l\}$ in \cref{thm:arith-DFL} are commonly known as ``tricolor sum-free sets.''

\medskip

From now on, we restrict to the setting of $G = \FF_p^n$ for a fixed $p$.

\begin{definition}
	Let $\delta_p(\epsilon)$ denote the largest possible constant $\delta$ in \cref{thm:arith-TRL} when restricted to groups of the form $G = \FF_p^n$ for fixed prime $p$.
\end{definition}
	
\begin{definition}
	Let $N_p(\epsilon)$ denote the smallest positive integer so that \cref{thm:arith-DFL} holds when restricted to groups of the form $G = \FF_p^n$ with $p^n \ge N_p(\epsilon)$ and fixed prime $p$.
\end{definition}

In this setting, Green's arithmetic regularity proof of \cref{thm:arith-TRL} also gives us the following stronger statement, analogous of \cref{thm:TRL-bdd,thm:TRL-ah}.

\begin{theorem}[Arithmetic triangle removal lemma with bounded complexity]
    For every $\epsilon > 0$ and prime $p$, there exist $\delta >0$ and a positive integer $m$ such that if $X,Y,Z \subseteq \FF_p^n$ are such that $X\times Y \times Z$ has fewer than $\delta p^{2n}$ triangles, 
    then there exist $X',Y',Z' \subseteq \FF_p^m$ with $X' \times Y' \times Z'$ being triangle-free, 
	and a linear map $\phi \colon \FF_p^n \to \FF_p^m$ such that at most $\epsilon p^n$ elements from each of $X,Y,Z$ do not get mapped to $X',Y',Z'$ respectively.
\end{theorem}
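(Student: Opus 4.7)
The plan is to deduce this statement from Green's arithmetic regularity lemma by a model-building argument parallel to the Szemer\'edi-regularity-based proof of \cref{thm:TRL-bdd}. First apply Green's regularity lemma with a small parameter $\eta = \eta(\epsilon, p)$ to be determined later: this yields a subspace $H \le \FF_p^n$ of codimension at most some $m = m(\eta, p)$ such that $1_X$, $1_Y$, $1_Z$ are each $\eta$-regular with respect to the cosets of $H$. Fix any linear identification of $\FF_p^n / H$ with $\FF_p^m$, and let $\phi \colon \FF_p^n \to \FF_p^m$ be the resulting linear surjection. For each $a \in \FF_p^m$, set $\alpha_X(a) = \abs{X \cap \phi^{-1}(a)}/\abs{H}$, and analogously define $\alpha_Y(a)$ and $\alpha_Z(a)$.

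Next, define the heavy subsets $X_0 = \set{a \in \FF_p^m : \alpha_X(a) \ge \epsilon/2}$ and analogously $Y_0, Z_0 \subseteq \FF_p^m$. The arithmetic counting lemma gives that the number of triangles in $X \times Y \times Z$ is at least
\[
	\abs{H}^2 \sum_{a_1+a_2+a_3=0} \alpha_X(a_1)\alpha_Y(a_2)\alpha_Z(a_3) - O(\eta p^{2n}),
\]
and the sum on the right is in turn at least $(\epsilon/2)^3 T_0$, where $T_0$ denotes the number of triangles in $X_0 \times Y_0 \times Z_0 \subseteq \FF_p^m$. Assuming $X \times Y \times Z$ has fewer than $\delta p^{2n}$ triangles and $\eta \le \delta$, this gives $T_0 \le O(\delta)(\epsilon/2)^{-3} p^{2m}$. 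Choose $\delta$ small enough in terms of $\epsilon$ and $p$ so that $O(\delta)(\epsilon/2)^{-3} \le \delta_p(\epsilon/2)$; then \cref{thm:arith-TRL} applied in $\FF_p^m$ produces $X', Y', Z' \subseteq \FF_p^m$ with $X' \times Y' \times Z'$ triangle-free and $\abs{X_0 \setminus X'}, \abs{Y_0 \setminus Y'}, \abs{Z_0 \setminus Z'} \le (\epsilon/2) p^m$.

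It remains to count the elements of $X$ whose image under $\phi$ does not lie in $X'$. Such an element lies in a coset $a$ with either $\alpha_X(a) < \epsilon/2$, contributing at most $(\epsilon/2)\abs{H} p^m = (\epsilon/2) p^n$ elements in total, or $a \in X_0 \setminus X'$, contributing at most $\abs{H} \cdot (\epsilon/2) p^m = (\epsilon/2) p^n$ elements. The total is at most $\epsilon p^n$, and the same bound holds for $Y$ and $Z$. The main technical subtlety is the quantitative calibration between $\eta$, the codimension $m$, and the error in the arithmetic counting lemma; however, since the required $\eta$ can be fixed purely in terms of $\epsilon$, $p$, and $\delta_p(\epsilon/2)$ without any dependence on $m$, the parameters are chosen in the order $\epsilon \to \delta_p(\epsilon/2) \to \delta, \eta \to m$ with no circular dependence, and no idea beyond the standard regularity-plus-counting scheme appears to be required.
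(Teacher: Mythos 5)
Your proof is correct and follows the same high-level scheme the paper uses: regularity lemma on $\FF_p^n$ to produce a bounded-codimension subspace $H$, pass to the reduced model on $\FF_p^m$ via coset densities, apply the counting lemma to transfer the triangle bound, apply the removal lemma in the model, and then count misclassified elements coset by coset. The codimension bookkeeping (padding $\FF_p^{m'} \hookrightarrow \FF_p^m$ if the subspace has codimension $m' < m$) and the parameter ordering $\epsilon \to \delta_p(\epsilon/2) \to \delta, \eta \to m$ are both fine, and the factor-$\abs{H}^2$ normalization in the counting-lemma step is exactly right.

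The paper does not actually write out a proof of this particular theorem (it is asserted to follow from Green's arithmetic regularity proof), but the quantitative refinement it later proves -- the upper bound $M_p(\epsilon) \le p^{27\delta_p(\epsilon/4)^{-2}}$ in \cref{thm:main-arith}, whose proof appears in \cref{sec:TFL-TRL} -- uses the same skeleton with one different ingredient. Where you deal with the weighted reduced model by thresholding the densities $\alpha_X, \alpha_Y, \alpha_Z$ at $\epsilon/2$ and then applying the \emph{unweighted} removal lemma to the heavy cosets $X_0, Y_0, Z_0$, the paper instead proves a genuinely \emph{weighted} arithmetic removal lemma (\cref{lem:weighted-arith-TRL}) via a random blow-up argument and applies that directly. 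The two routes differ only quantitatively: your thresholding gives the admissible $\delta$ on the order of $\delta_p(\epsilon/2)\,(\epsilon/2)^3$, while the blow-up route gives $\delta$ on the order of $\delta_p(\epsilon/4)$ with no extra polynomial loss. Since the statement here is purely qualitative, your thresholding argument is fully adequate; the paper's sharper route only matters for the explicit bound in \cref{thm:main-arith}, where the polynomial loss is not affordable given that $\delta_p(\epsilon)$ is itself polynomial. (The paper points out exactly this tradeoff in the graph sketch preceding \cref{thm:weighted-removal}.) One optional improvement: to get a polynomial rather than tower-type bound on $m$, invoke the weak arithmetic regularity lemma (\cref{lem:arith-wk-reg}) rather than Green's full regularity lemma; your argument goes through unchanged.
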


A special case is the following analogue of the triangle-free lemma (\cref{thm:TFL}).

\begin{theorem}[Arithmetic triangle-free lemma] \label{thm:arith-TFL}
	For every $\epsilon > 0$ and prime $p$, there exists a positive integer $m$ such that if $X,Y,Z \subseteq \FF_p^n$ are such that $X\times Y \times Z$ is triangle-free, then there exist $X',Y',Z' \subseteq \FF_p^m$ with $X' \times Y' \times Z'$ being triangle-free, 
	and a linear map $\phi \colon \FF_p^n \to \FF_p^m$ such that at most $\epsilon p^n$ elements from each of $X,Y,Z$ do not get mapped to $X',Y',Z'$ respectively.
\end{theorem}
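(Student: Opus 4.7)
The plan is to apply an arithmetic regularity lemma to $1_X, 1_Y, 1_Z$, quotient by the resulting subspace, and take the ``dense'' cosets of this quotient as $X', Y', Z'$. This is the direct arithmetic analogue of the coset-density construction underlying the graph-theoretic triangle-free lemma.

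Concretely, fix a uniformity parameter $\eta$ of order $\epsilon^3$ (to be chosen below). Applying Green's arithmetic regularity lemma simultaneously to $1_X, 1_Y, 1_Z$ produces a subspace $H \le \FF_p^n$ of codimension at most $m = m(\eta, p)$ such that each of these indicator functions is $\eta$-uniform with respect to cosets of $H$ (in the Fourier, equivalently $U^2$, sense). After choosing a basis, identify $\FF_p^n/H$ with $\FF_p^m$ and let $\phi : \FF_p^n \to \FF_p^m$ be the associated (linear) quotient projection. For each $a \in \FF_p^m$ set $d_X(a) := |X \cap \phi^{-1}(a)|/|H|$ and likewise $d_Y(a), d_Z(a)$; then define
\[
X' := \{a \in \FF_p^m : d_X(a) \ge \epsilon\}, \quad Y' := \{a : d_Y(a) \ge \epsilon\}, \quad Z' := \{a : d_Z(a) \ge \epsilon\}.
\]

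Two things remain to verify. The coverage bound is immediate: each coset $a \notin X'$ contributes fewer than $\epsilon |H|$ elements of $X$ to $X \setminus \phi^{-1}(X')$, so $|X \setminus \phi^{-1}(X')| \le \epsilon p^m \cdot |H| = \epsilon p^n$, and similarly for $Y$ and $Z$. To show $X' \times Y' \times Z'$ is triangle-free, suppose for contradiction that $a + b + c = 0$ in $\FF_p^m$ with $(a,b,c) \in X' \times Y' \times Z'$. The counting lemma that accompanies arithmetic regularity then yields
\[
\#\{(x,y,z) \in (X \cap \phi^{-1}(a)) \times (Y \cap \phi^{-1}(b)) \times (Z \cap \phi^{-1}(c)) : x+y+z = 0\} = d_X(a) d_Y(b) d_Z(c) |H|^2 + O(\eta |H|^2).
\]
Since $d_X(a), d_Y(b), d_Z(c) \ge \epsilon$, this count is at least $(\epsilon^3 - O(\eta))|H|^2 > 0$ once $\eta$ is a sufficiently small multiple of $\epsilon^3$, contradicting the assumption that $X \times Y \times Z$ is triangle-free.

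The main obstacle is quantitative rather than structural: a black-box appeal to Green's arithmetic regularity forces $m$ to grow like a tower in $1/\epsilon$. A substantially better bound should follow by adapting the HKLLS-style argument underlying \cref{eq:TFL-TRL} to this setting: replace Green's regularity by a Frieze--Kannan-type weak arithmetic regularity lemma, and use the arithmetic triangle removal lemma (\cref{thm:arith-TRL}) itself in place of the counting lemma step to rule out cross-coset triangles. The structural steps above are unchanged; only the source of regularity and the counting input are swapped, and one expects $m$ to become polynomial in $\delta_p(\epsilon/C)^{-1}$ for a suitable constant $C$.
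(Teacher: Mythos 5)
Your main argument has a gap at the step where you invoke a per-coset counting lemma. You assert that Green's regularity lemma yields a subspace $H$ such that ``each of these indicator functions is $\eta$-uniform with respect to cosets of $H$,'' and then for a putative bad triple $(a,b,c) \in X' \times Y' \times Z'$ with $a+b+c=0$ you count triangles \emph{inside} the specific coset triple $\bigl(X \cap \phi^{-1}(a)\bigr) \times \bigl(Y \cap \phi^{-1}(b)\bigr) \times \bigl(Z \cap \phi^{-1}(c)\bigr)$. This per-coset count requires that the restrictions of $1_X, 1_Y, 1_Z$ to those \emph{particular} cosets be $\eta$-uniform (with Fourier analysis carried out over $H$). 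But Green's regularity lemma, even in its strongest form, only guarantees this for all but an $\eta$-fraction of cosets; there can be exceptional cosets on which the restriction is not uniform, and nothing in your argument rules out $a,b,c$ being exceptional. The fix is standard — additionally exclude the exceptional cosets from $X',Y',Z'$, adjusting the density threshold and $\eta$ so the coverage bound still holds — but it is a genuine omission. Note also that if you instead intended the \emph{weak} notion of regularity, namely $\snorm{(1_X - (1_X)_H)^\wedge}_\infty \le \eta$ over $\FF_p^n$, then the per-coset counting step does not follow at all: the global Fourier bound controls $\Lambda(1_X,1_Y,1_Z) - \Lambda((1_X)_H,(1_Y)_H,(1_Z)_H)$ but says nothing about the triangle count inside one coset triple.

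The paper's proof of the quantitative version (the upper bound $M_p(\epsilon) \le p^{27\delta_p(\epsilon/4)^{-2}}$ in \cref{thm:main-arith}) takes a different route, and your closing paragraph gestures at it but gets the mechanism slightly wrong. It is \emph{not} a swap of the counting lemma for the removal lemma; both are used. One applies the weak arithmetic regularity lemma (\cref{lem:arith-wk-reg}, codimension $O(\delta^{-2})$), forms the reduced densities $f,g,h \colon \FF_p^m \to [0,1]$, and uses the \emph{global} counting lemma (\cref{lem:arith-counting}) to deduce $\Lambda(f,g,h) \le \delta$ from $\Lambda(1_X,1_Y,1_Z)=0$. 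The key ingredient you are missing is a \emph{weighted} arithmetic removal lemma (\cref{lem:weighted-arith-TRL}), derived from the unweighted one by a random blow-up argument, which modifies $f,g,h$ in $L^1$ by at most $\epsilon$ to obtain $f',g',h'$ with $\Lambda(f',g',h')=0$ exactly. One then takes $X',Y',Z'$ to be the supports of $f',g',h'$. Crucially, this is not a density threshold at $\epsilon$: thresholding would not make the reduced triangle density exactly zero, which is why the weighted removal lemma is indispensable. Your threshold-based construction is fine for the qualitative statement once exceptional cosets are handled, but it forces tower-type bounds and would not give the quantitative result the paper needs.
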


\begin{definition}
	Let $m_p(\epsilon)$ denote the smallest $m$ in \cref{thm:arith-TFL}. Let $M_p(\epsilon) = p^{m_p(\epsilon)}$.
\end{definition}

Following a breakthrough of Croot, Lev, and Pach~\cite{CLP17} and Ellenberg and Gijswijt~\cite{EG17} on the cap set problem, 
a number of developments together led to the following tight bound on $N_p(\epsilon)$. The upper bound on $N_p(\epsilon)$ was shown by 
Blasiak, Church, Cohn, Grochow, Naslund, Sawin, and Umans~\cite{BCCGNSU17} and independently Alon (unpublished).
The lower bound was first established by Kleinberg and Fu~\cite{FK14} for $p=2$, and then in general by Kleinberg, Sawin, and Speyer~\cite{KSS18} conditional on a conjecture later proved independently by Norin~\cite{Nor19} and Pebody~\cite{Peb18}.

\begin{theorem}[Optimal bounds in arithmetic diamond-free lemma for $\FF_p^n$]
	\label{thm:arith-DFL-bounds}
	For fixed prime $p$, as $\epsilon \to 0$, one has
	\[
		N_p(\epsilon) = \epsilon^{- 1/c_p + o(1)}
	\]
	with constant $0 < c_p <1$ given by
	\begin{equation} \label{eq:cp}
	p^{1-c_p} = \inf_{0 < t < 1} t^{-(p-1)/3}(1 + t + t^2 + \cdots + t^{p-1}).
	\end{equation} 
\end{theorem}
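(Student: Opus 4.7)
The plan is to prove the two matching bounds via distinct methods, following the ideas of the cited references.

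For the upper bound, I would use the slice rank polynomial method of Croot--Lev--Pach and Ellenberg--Gijswijt. Given a tricolor sum-free family $\{(x_i,y_i,z_i)\}_{i=1}^{l}$ in $\FF_p^n$, define the tensor $T\colon (\FF_p^n)^3 \to \FF_p$ by
\[ T(a,b,c) = \prod_{j=1}^{n} \bigl(1-(a_j+b_j+c_j)^{p-1}\bigr), \]
which by Fermat's little theorem equals $1$ when $a+b+c=0$ and $0$ otherwise. Restricted to the indexed triples $(x_i,y_j,z_k)$, $T$ becomes the $l\times l\times l$ identity tensor, whose slice rank equals $l$ by Tao's slice rank lemma. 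Expanding $T$ as a polynomial in the $3n$ variables $a_j,b_j,c_j$, every monomial has total degree at most $(p-1)n$, so at least one of the three groups of variables carries degree at most $(p-1)n/3$; grouping monomials according to which group is low-degree yields a slice-rank decomposition of $T$ of size at most $3\abs{S}$, where $S = \{\alpha \in \{0,1,\ldots,p-1\}^n : \abs{\alpha}\le (p-1)n/3\}$. A standard Chernoff/Markov estimate gives $\abs{S}\le t^{-(p-1)n/3}(1+t+\cdots+t^{p-1})^n$ for any $0<t\le 1$, and optimizing in $t$ yields $\abs{S}\le p^{(1-c_p)n}$ by the very definition of $c_p$. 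Hence $l\le 3p^{(1-c_p)n}$, which forces $l\le \epsilon p^n$ whenever $p^n\ge (3/\epsilon)^{1/c_p}$, giving $N_p(\epsilon)\le \epsilon^{-1/c_p+o(1)}$.

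For the matching lower bound, I would invoke the variational construction of Kleinberg--Sawin--Speyer (extending Kleinberg--Fu in the case $p=2$), which produces tricolor sum-free configurations in $\FF_p^n$ of size $p^{(1-c_p)n-o(n)}$ contingent on a combinatorial conjecture on multicolored sum-sets in $\{0,\ldots,p-1\}^n$ whose extremal rate is governed by the same infimum formula defining $c_p$. That conjecture was resolved independently by Norin and by Pebody. Given $\epsilon>0$, choosing $n$ so that $p^{-c_p n + o(n)} \ge \epsilon$ yields a configuration with $l \ge \epsilon p^n$ inside $\FF_p^n$ of size $p^n \ge \epsilon^{-1/c_p-o(1)}$, establishing $N_p(\epsilon)\ge \epsilon^{-1/c_p-o(1)}$.

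The slice rank upper bound is essentially mechanical once the tensor $T$ is written down, and the $o(1)$ in the exponent absorbs the harmless factor of $3$ against $p^{c_p n}$. The real difficulty is the lower bound construction: its size saturates a sharp large-deviation equality whose feasibility is exactly the Norin/Pebody theorem. I would cite those papers rather than reproving the construction, and simply verify the translation from a configuration of size $p^{(1-c_p)n-o(n)}$ to the claimed lower bound on $N_p(\epsilon)$.
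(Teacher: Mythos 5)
The paper does not give its own proof of this theorem; it simply cites Blasiak--Church--Cohn--Grochow--Naslund--Sawin--Umans (with independent work of Alon) for the upper bound and Kleinberg--Fu, Kleinberg--Sawin--Speyer, Norin, and Pebody for the lower bound. Your proposal correctly sketches the slice-rank argument underlying the upper bound and defers to the same references for the lower bound, so it coincides with the paper's approach.
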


Fox and Lov\'asz~\cite{FL17} proved a polynomial dependence of parameters for the arithmetic triangle removal lemma over $\FF_p^n$, and in fact determined the optimal exponent.

\begin{theorem}[Optimal bounds in arithmetic triangle removal lemma for $\FF_p^n$]
	\label{thm:arith-TRL-bounds}
	For fixed prime $p$, as $\epsilon \to 0$, one has
	\[
		\delta_p(\epsilon) = \epsilon^{1 + 1/c_p + o(1)}
	\]
	where $c_p >0$ is the same constant defined in \cref{thm:arith-DFL-bounds}.
\end{theorem}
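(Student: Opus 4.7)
The plan is to establish the matching upper and lower bounds on $\delta_p(\epsilon)$ separately, with the exponent $1 + 1/c_p$ arising from balancing parameters so that the extremal tricolor sum-free construction in $\FF_p^k$ with $p^k \sim N_p(\epsilon)$ both saturates \cref{thm:arith-DFL-bounds} and produces a configuration whose triangles cannot be covered within the allotted budget.

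\textbf{Construction (upper bound $\delta_p(\epsilon) \le \epsilon^{1 + 1/c_p + o(1)}$).} I first invoke the lower-bound direction of \cref{thm:arith-DFL-bounds} to obtain, for each large $n$, a tricolor sum-free family $(x_i, y_i, z_i)_{i=1}^L$ in $\FF_p^n$ with $L = p^{(1 - c_p - o(1))n}$. Setting $X = \{x_i\}$, $Y = \{y_i\}$, $Z = \{z_i\}$ produces sets in which $X \times Y \times Z$ contains exactly $L$ triangles, all vertex-disjoint. Any collection of deletions killing every triangle must therefore remove at least $L/3$ elements from one of $X$, $Y$, $Z$. Whenever $L > 3 \epsilon p^n$, the triangle removal lemma with parameter $\epsilon$ forces $\delta p^{2n} \le L$, and choosing $n$ so that $L \sim 3 \epsilon p^n$ yields $p^n = \epsilon^{-1/c_p + o(1)}$ and $\delta_p(\epsilon) \le L/p^{2n} = \epsilon^{1 + 1/c_p + o(1)}$.

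\textbf{Removal (lower bound $\delta_p(\epsilon) \ge \epsilon^{1 + 1/c_p + o(1)}$).} Assume $X, Y, Z \subseteq \FF_p^n$ contain $T \le \delta p^{2n}$ triangles with $\delta = \epsilon^{1 + 1/c_p + \eta}$; I want to delete $\epsilon p^n$ elements from each side to kill every triangle. First, perform a degree cleanup: on each side, discard those elements lying in more than $t := C \delta p^n / \epsilon$ triangles. There are at most $3T/(Ct) \le 3\epsilon p^n / C$ such elements in total, well within budget, and the remaining configuration has every vertex in fewer than $t$ triangles. Next, I would apply subspace sampling in the style of Fox-Lov\'asz: choose $k$ with $p^k = N_p(\epsilon/K) = \epsilon^{-1/c_p + o(1)}$, partition $\FF_p^n$ into cosets of a uniformly random $k$-dimensional subspace, and within each coset use \cref{thm:arith-DFL-bounds} to bound the largest tricolor sum-free family of remaining triangles by $\epsilon p^k/K$. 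Averaging the local vertex covers over cosets produces a global vertex cover of size $O(\epsilon p^n)$ per side, and the product of the threshold $t = \delta p^n/\epsilon$ with the coset scale $p^k = \epsilon^{-1/c_p + o(1)}$ is precisely what makes the exponent come out to $1 + 1/c_p$.

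\textbf{Main obstacle.} The most delicate step is turning a bound on the maximum tricolor sum-free family inside a coset into a vertex cover of comparable size. A tricolor sum-free family is strictly more restricted than a matching once three or more triples interact (the constraint $x_i + y_j + z_k \ne 0$ for distinct indices is an additive, not merely combinatorial, condition), so a naive greedy extension of a tricolor sum-free family to a cover loses too much. Exploiting the low-degree property from the cleaning step to cap the number of forbidden additive conflicts per chosen triangle is exactly what bridges this gap, and ensuring the subspace-sampling average preserves the correct exponent rather than introducing extra polynomial factors in $\epsilon$ is the remaining point that demands careful accounting.
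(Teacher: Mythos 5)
The paper does not actually prove this theorem; it cites it to Fox and Lov\'asz~\cite{FL17}. What the paper does contain is \cref{theoremdiamondtoremoval} and the Lemmas \ref{lem1}--\ref{lem2} in \cref{section:diamondsandtriangles}, which the authors explicitly describe as a graph analogue of the Fox--Lov\'asz argument (vertex subset sampling in place of subspace sampling). Measured against that template, your proposal is half right.

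Your upper-bound construction is essentially correct: taking the extremal tricolor sum-free family $(x_i,y_i,z_i)$ with $L = p^{(1-c_p-o(1))n}$, the $L$ triangles are pairwise disjoint in each coordinate, so at least $L/3$ elements must be removed from one side, and tuning $n$ so that $L\sim 3\epsilon p^n$ gives $\delta_p(\epsilon)\le L/p^{2n}=\epsilon^{1+1/c_p+o(1)}$.

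The lower bound, however, has a genuine gap, and indeed you flag it yourself. You try to run the argument \emph{forward}: bound the size of a tricolor sum-free subfamily inside a coset via \cref{thm:arith-DFL-bounds}, then convert that into a vertex cover by ``averaging the local vertex covers over cosets.'' This step is not just ``demanding careful accounting''---it does not work. The diamond-free bound caps the size of a tricolor sum-free \emph{subfamily} of your triangle set; it says nothing about how large a cover of \emph{all} the remaining triangles must be. A maximal tricolor sum-free subfamily can be tiny while the remaining triangles still require a large cover, because the additive constraint $x_i+y_j+z_k\ne 0$ for $(i,j,k)$ non-diagonal is far stronger than vertex-disjointness. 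The correct logic, mirrored by Lemmas \ref{lem1}--\ref{lem2}, is by \emph{contrapositive}: assume $X,Y,Z$ has few triangles yet cannot be made triangle-free within budget, first pass to a subconfiguration in which there are \emph{still many triangles} and every element lies in a bounded number of them (this is what Lemma \ref{lem1}'s iterated deletion with the auxiliary function $g$ achieves; your one-shot degree truncation does not guarantee that many triangles survive, since you have no lower bound on the cover size to lean on), and then use random subspace sampling as in Lemma \ref{lem2} to produce a genuine tricolor sum-free family that is too large, contradicting \cref{thm:arith-DFL-bounds}. The sampling step must also be stated more carefully: restriction to a single coset of a random subspace does not preserve the relation $x+y+z=0$ within a single coset, so one has to work with the subspace itself (or triples of cosets $a+V$, $b+V$, $-a-b+V$), as the Fox--Lov\'asz proof does. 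In short: right ingredients, but the direction of the implication is reversed and the degree cleanup is too crude to feed into it.
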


We prove the following analogue of \cref{thm:main-graph}.

\begin{theorem} [Main theorem, arithmetic analogue]\label{thm:main-arith}
For any $0 < \epsilon < 1$ and prime $p$,
\[
p^{\epsilon N_p(5\epsilon)/p} \le M_p(\epsilon) \le p^{27\delta_p(\epsilon/4)^{-2}}.
\]
\end{theorem}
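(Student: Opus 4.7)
The theorem is the arithmetic analogue of \cref{thm:main-graph}, and I will treat the two bounds on $M_p(\epsilon)$ separately, mirroring the proofs of \cref{eq:TFL-TRL} and \cref{thm:DFL-TFL}.

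\textbf{Upper bound} $M_p(\epsilon)\le p^{27\delta_p(\epsilon/4)^{-2}}$. The plan is to run the Hoppen--Kohayakawa--Lang--Lefmann--Stagni proof of \cref{eq:TFL-TRL} entirely in $\FF_p^n$. Given a triangle-free $X\times Y\times Z\subseteq\FF_p^n$, I would apply an arithmetic Frieze--Kannan-style weak-regularity lemma over $\FF_p^n$ jointly to $\one_X,\one_Y,\one_Z$ with parameter $\eta = \delta_p(\epsilon/4)/3$; this produces a subspace $H\le\FF_p^n$ of codimension at most $3/\eta^2 = 27\delta_p(\epsilon/4)^{-2}$ for which the number of linear configurations $x+y+z=0$ across any triple of $H$-cosets is approximated by the triple product of the $X,Y,Z$-densities in those cosets. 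Let $\phi\colon\FF_p^n\to\FF_p^n/H\cong\FF_p^m$ be the quotient map with $m=$ codim$(H)$, and define $X_0^\ast,Y_0^\ast,Z_0^\ast\subseteq\FF_p^m$ by thresholding the coset-densities at $\epsilon/4$. Weak regularity combined with the triangle-freeness of $X\times Y\times Z$ bounds the triangle count of $X_0^\ast\times Y_0^\ast\times Z_0^\ast$ in $\FF_p^m$ by $\delta_p(\epsilon/4)\cdot p^{2m}$, so the arithmetic triangle removal lemma (\cref{thm:arith-TRL}) produces triangle-free $X',Y',Z'\subseteq\FF_p^m$ with $|X_0^\ast\triangle X'|,|Y_0^\ast\triangle Y'|,|Z_0^\ast\triangle Z'|\le(\epsilon/4)p^m$. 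Pulling back along $\phi$ and combining the three sources of error (thresholding loses $\le(\epsilon/4)p^n$ per set, removal loses another $\le(\epsilon/4)p^n$, the weak-regularity error is absorbed by $\eta$) yields a linear $\epsilon$-approximate homomorphism into $X'\times Y'\times Z'$ with target dimension $m\le27\delta_p(\epsilon/4)^{-2}$.

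\textbf{Lower bound} $M_p(\epsilon)\ge p^{\epsilon N_p(5\epsilon)/p}$. This is the arithmetic version of \cref{thm:DFL-TFL}: take an extremal tricolor sum-free set, lift it to a triangle-free configuration, apply \cref{thm:arith-TFL}, then run an entropy / double-counting argument. Choose $n$ with $p^n$ just below $N_p(5\epsilon)$ and a tricolor sum-free triple $X_0=\{x_1,\dots,x_l\}$, $Y_0=\{y_1,\dots,y_l\}$, $Z_0=\{z_1,\dots,z_l\}\subseteq\FF_p^n$ of size $l>5\epsilon p^n$. Working in $\FF_p^{n+k}$, set $X:=X_0\times A$, $Y:=Y_0\times A$, $Z:=Z_0\times A$, where $A\subset\FF_p^k$ is an affine subset with $A+A+A\not\ni 0$; for $p\ne 3$ one may take $A=\ell^{-1}(1)$ a hyperplane (so $|A|=p^{k-1}$), and for $p=3$ an analogous codimension-$2$ affine slice. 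The only potential triangles in $X\times Y\times Z$ are the diagonals $(x_i,a)+(y_i,b)+(z_i,c)=(0,a+b+c)$, excluded by the choice of $A$, so $X\times Y\times Z$ is triangle-free. Now suppose $\phi\colon\FF_p^{n+k}\to\FF_p^m$ is a linear $\epsilon$-approximate homomorphism into a triangle-free $X'\times Y'\times Z'\subseteq\FF_p^m$, and write $\phi_0$ for the restriction of $\phi$ to the first $n$ coordinates. Triangle-freeness forces, for each of the $l$ diagonal cosets, enough exclusions from $X',Y',Z'$ to kill the $|A|^2$ pushed-forward potential diagonal triangles, and each such exclusion of a $\phi_0$-class eats at least $|A|$ elements of $X\cup Y\cup Z$. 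Budgeting these exclusions against $3\epsilon p^{n+k}$ and then applying the arithmetic diamond-free lemma (\cref{thm:arith-DFL}) in $\FF_p^m$ to the residual tricolor sum-free image $(\phi_0(X_0),\phi_0(Y_0),\phi_0(Z_0))$ yields $m\ge\epsilon N_p(5\epsilon)/p$; the factor $5$ inside $N_p(5\epsilon)$ is absorbed by splitting the error across the three sets and the $|A|=p^{k-1}$ versus $p^k$ slack.

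\textbf{Main obstacle.} The upper bound is essentially bookkeeping once one has the arithmetic Frieze--Kannan-style weak-regularity lemma and \cref{thm:arith-TRL}. The real work is the lower bound, specifically the entropy / double-counting step that converts ``triangle-freeness of $X'\times Y'\times Z'$'' into ``$(\phi_0(X_0),\phi_0(Y_0),\phi_0(Z_0))$ is, after removing $O(\epsilon p^n)$ elements, a tricolor sum-free set in $\FF_p^m$ of size close to $l$.'' Getting the sharp constant $5$ in $N_p(5\epsilon)/p$ — rather than some looser $C\epsilon$ — requires carefully coordinating the blow-up dimension $k$, the density of the affine slice $A$, and how the fibre-removal cost is shared among the three error budgets, and also a small separate argument to treat $p\in\{2,3\}$, where a single-hyperplane $A$ with $A+A+A\not\ni 0$ is unavailable.
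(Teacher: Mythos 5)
Your lower bound construction is broken at its core, and the upper bound is lossier than claimed. Let me explain each.

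\textbf{Lower bound.} You blow up a tricolor sum-free triple in $\FF_p^n$ by a \emph{single common} affine slice $A = \ell^{-1}(1) \subset \FF_p^k$, i.e.\ $X = X_0 \times A$, etc. This cannot work: the rank-one linear map $\phi\colon\FF_p^{n+k}\to\FF_p$ defined by $\phi(v,a)=\ell(a)$ sends all of $X$, $Y$, $Z$ to the singleton $\{1\}\subset\FF_p$, and $\{1\}\times\{1\}\times\{1\}$ is triangle-free for $p\ne 3$. So $\phi$ is an \emph{exact} (zero-error) homomorphism into $\FF_p^1$, and your construction forces no lower bound on $m$ whatsoever. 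The problem is that the diagonal constraint is killed by a shift that is shared by all $l$ triples, so one linear form recovers it all at once. The paper's Construction~4.1 avoids this by working in $\FF_p^{n+l}$ — one fresh coordinate \emph{per triple} — and putting a shift in the $(n+i)$-th coordinate only for the $i$-th triple. The key point in Proposition~4.2 is then a rank argument: since $\phi$ has rank at most $m$, its kernel contains a $w$ supported on the last $l$ coordinates with at least $l-m$ nonzero entries; for each such ``good'' index $i$ one perturbs a diagonal solution above $(x_i,y_i,z_i)$ by random multiples of $w$ (staying inside $X'_i\times Y'_i\times Z'_i$), and since $\phi(w)=0$ the images are forced to sum to zero, so one of the three must miss $X''$, $Y''$, $Z''$. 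Summing over good $i$ gives $\ge(l-m)p^l/4$ missed elements. It is precisely the $l$ independent new coordinates that make every low-rank $\phi$ pay, and there is no analogue of this in a $\FF_p^k$-blow-up with a shared slice, no matter how $k$ and the density of $A$ are tuned.

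\textbf{Upper bound.} Your plan thresholds the reduced densities at $\epsilon/4$ and then applies the unweighted arithmetic removal lemma. But if $f,g,h$ are the $H$-coset densities and you set $X_0^* = \{x : f(x)\ge\epsilon/4\}$, the best you can say is $\Lambda(\one_{X_0^*},\one_{Y_0^*},\one_{Z_0^*}) \le (4/\epsilon)^3\,\Lambda(f,g,h)$. To get the thresholded triangle density below $\delta_p(\epsilon/4)$ you would therefore need weak regularity parameter $\eta\approx(\epsilon/4)^3\delta_p(\epsilon/4)/3$, giving codimension $\approx 27(4/\epsilon)^6\delta_p(\epsilon/4)^{-2}$ rather than $27\delta_p(\epsilon/4)^{-2}$. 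The paper explicitly notes (Section~5) that the thresholding route is too lossy here, and instead proves a \emph{weighted} arithmetic removal lemma (Lemma~5.4) via a random blow-up: lift $f,g,h$ to random subsets of $\FF_p^{n+m'}$, apply the unweighted removal lemma there, then zero out a coset density $f(x)$ whenever at least a quarter of the fibre above $x$ was deleted. This shows $\Lambda(f,g,h)<\delta_p(\epsilon/4)$ already suffices to make $f,g,h$ triangle-free after $L^1$ error at most $\epsilon$, with no extra $\epsilon$-power loss, which is what yields the clean $27\delta_p(\epsilon/4)^{-2}$.
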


\begin{corollary}
	For any fixed prime $p$, as $\epsilon \to 0$,
	\[
		\epsilon^{-1/c_p + 1 + o(1)} 
		\le \log_p M_p(\epsilon)
		\le \epsilon^{-2/c_p - 2 + o(1)}.
	\]
\end{corollary}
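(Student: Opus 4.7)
The plan is to take the two inequalities in \cref{thm:main-arith}, take $\log_p$ of both, and substitute the sharp asymptotics $N_p(\epsilon) = \epsilon^{-1/c_p + o(1)}$ from \cref{thm:arith-DFL-bounds} and $\delta_p(\epsilon) = \epsilon^{1 + 1/c_p + o(1)}$ from \cref{thm:arith-TRL-bounds}. Throughout, $p$ is treated as a fixed constant, so any absolute constant $a>0$ (including $p$, $1/p$, $5$, $1/4$, $27$) satisfies $a = \epsilon^{o(1)}$ as $\epsilon \to 0$, and thus gets absorbed into $\epsilon^{o(1)}$ factors. There is no real obstacle; the whole argument is bookkeeping of the $o(1)$ exponents.

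For the lower bound, I would apply $\log_p$ to $M_p(\epsilon) \ge p^{\epsilon N_p(5\epsilon)/p}$ to obtain
\[
  \log_p M_p(\epsilon) \;\ge\; \frac{\epsilon\, N_p(5\epsilon)}{p}.
\]
Since $5\epsilon \to 0$ as $\epsilon \to 0$, \cref{thm:arith-DFL-bounds} gives $N_p(5\epsilon) = (5\epsilon)^{-1/c_p + o(1)}$. Writing $(5\epsilon)^{-1/c_p+o(1)} = 5^{-1/c_p+o(1)} \epsilon^{-1/c_p + o(1)} = \epsilon^{-1/c_p + o(1)}$ (absorbing the constant $5^{-1/c_p}$ into $\epsilon^{o(1)}$), and likewise absorbing the $1/p$ factor, the right-hand side becomes $\epsilon \cdot \epsilon^{-1/c_p + o(1)} = \epsilon^{-1/c_p + 1 + o(1)}$, which is the desired lower bound.

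For the upper bound, I would similarly take $\log_p$ of $M_p(\epsilon) \le p^{27\delta_p(\epsilon/4)^{-2}}$ to get
\[
  \log_p M_p(\epsilon) \;\le\; 27\,\delta_p(\epsilon/4)^{-2}.
\]
By \cref{thm:arith-TRL-bounds}, $\delta_p(\epsilon/4) = (\epsilon/4)^{1 + 1/c_p + o(1)} = \epsilon^{1 + 1/c_p + o(1)}$, so $\delta_p(\epsilon/4)^{-2} = \epsilon^{-2 - 2/c_p + o(1)}$. Absorbing the constant $27$ into $\epsilon^{o(1)}$, the bound becomes $\epsilon^{-2/c_p - 2 + o(1)}$, as required.

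The only point worth double-checking is that the $o(1)$ terms in \cref{thm:arith-DFL-bounds,thm:arith-TRL-bounds} are stable under the substitutions $\epsilon \mapsto 5\epsilon$ and $\epsilon \mapsto \epsilon/4$; this is immediate because $\log(5\epsilon)/\log\epsilon \to 1$ and $\log(\epsilon/4)/\log\epsilon \to 1$ as $\epsilon \to 0$, so $o(1)$ expressed in terms of $\epsilon$ remains $o(1)$ after a bounded multiplicative rescaling of the argument.
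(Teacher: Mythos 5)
Your proposal is correct and is exactly the intended derivation: plug the sharp asymptotics of \cref{thm:arith-DFL-bounds,thm:arith-TRL-bounds} into the two inequalities of \cref{thm:main-arith}, take $\log_p$, and absorb the bounded multiplicative constants into the $\epsilon^{o(1)}$ factor. The paper does not spell this out precisely because there is nothing more to it, and your final remark about the stability of the $o(1)$ under rescaling $\epsilon \mapsto 5\epsilon$ and $\epsilon \mapsto \epsilon/4$ correctly handles the only point that could have required care.
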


One can check that $c_p = (0.172 \cdots + o(1))/\log p$ as $p \to \infty$. 
Indeed, by writing $t = 1 - x/p$ we can deduce that $\lim_{p \to \infty} (\text{RHS of } \cref{eq:cp})/p = \inf_{x > 0} e^{x/3} (1 - e^{-x})/x = e^{-0.172 \cdots}$.
In particular, $c_p = \Theta(1/\log p)$.
So we obtain the following bound.

\begin{corollary}
	There exists a universal constants $C>0$ so that for all $0 < \epsilon < 1/2$ and prime $p$,
	\[
		\epsilon^{ - (\log p)/C} \le \log_p M_p(\epsilon) \le \epsilon^{-C \log p}
	\]
\end{corollary}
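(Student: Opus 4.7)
The strategy is to deduce the corollary directly from the preceding Corollary, with the key step being uniform control of $c_p$ in terms of $\log p$.

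First, I would establish that there are absolute constants $0 < a \le A$ with $a/\log p \le c_p \le A/\log p$ for every prime $p \ge 2$. The paragraph immediately preceding the statement already proves $c_p \log p \to 0.172\ldots$ as $p \to \infty$, which handles all sufficiently large primes; for the remaining finite set of small primes, the trivial inequality $0 < c_p < 1$ coming from \eqref{eq:cp} combined with $\log p \ge \log 2$ gives the required two-sided bound after adjusting the constants.

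Second, I would substitute these uniform bounds on $c_p$ into the preceding Corollary, which gives $\epsilon^{-1/c_p + 1 + o(1)} \le \log_p M_p(\epsilon) \le \epsilon^{-2/c_p - 2 + o(1)}$ as $\epsilon \to 0$ for each fixed $p$. Writing $1/c_p = \Theta(\log p)$ with universal implicit constants, both exponents become $-\Theta(\log p) + O(1)$. Choosing a single universal $C > 0$ large enough to absorb the additive constants and the $o(1)$ correction then produces the cleaner bound $\epsilon^{-(\log p)/C} \le \log_p M_p(\epsilon) \le \epsilon^{-C\log p}$ in the asymptotic regime $\epsilon$ small (with the threshold possibly depending on $p$ a priori).

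Third, to extend the inequality to the full range $0 < \epsilon < 1/2$ uniformly in $p$, I would invoke monotonicity: $\log_p M_p(\epsilon)$ is nonincreasing in $\epsilon$, as is $\epsilon^{-C \log p}$. Thus it suffices to verify the bounds at $\epsilon$ bounded away from $0$. The upper bound at $\epsilon = 1/2$ reads $\log_p M_p(1/2) \le p^C$, which is easily secured by enlarging $C$ since $M_p(1/2)$ is finite for each $p$ and the polynomial-method bounds on $\delta_p$ provide a uniformly controlled upper estimate for $M_p(1/2)$ through Theorem~\ref{thm:main-arith}. The lower bound in the boundary range is handled analogously, using that $\epsilon^{-(\log p)/C}$ can be made as close to $1$ as desired by enlarging $C$.

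The main obstacle will be verifying that the $o(1)$ error terms in the preceding Corollary can be made uniform in $p$. This is where tracking back the constants in Theorems~\ref{thm:arith-DFL-bounds} and \ref{thm:arith-TRL-bounds} matters: the polynomial-method proofs (Croot--Lev--Pach and Ellenberg--Gijswijt on one side, the Kleinberg--Sawin--Speyer tensor constructions together with Norin and Pebody on the other) yield error terms of the form $\epsilon^{\pm O(1)/\log p}$ rather than just qualitative $\epsilon^{\pm o(1)}$, which is exactly what is needed so that a single universal constant $C$ can absorb them after dividing through by $\log p$. Once this uniformity is in hand, the derivation is a short computation.
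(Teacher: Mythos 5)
Your approach matches what the paper implicitly does: the corollary is stated immediately after the observation that $c_p = (0.172\cdots + o(1))/\log p$ (note the paper's line ``In particular, $c_p = \Theta(\log p)$'' is a typo for $1/c_p = \Theta(\log p)$, which you correctly read through), and the derivation is exactly ``substitute $1/c_p \asymp \log p$ into the preceding corollary and absorb the additive constants and $o(1)$ terms into $C$.''

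You are right to flag the uniformity of the $o(1)$ terms in $p$ as the genuine issue, and it is worth emphasizing that the paper does not address this either. The preceding corollary's $o(1)$ is an asymptotic as $\epsilon \to 0$ with $p$ fixed, inherited from Theorems~\ref{thm:arith-DFL-bounds} and~\ref{thm:arith-TRL-bounds}, so a priori the threshold $\epsilon_0(p)$ below which the exponent bounds kick in could shrink as $p$ grows. Your claimed fix --- that the polynomial-method arguments give an error of the form $\epsilon^{\pm O(1)/\log p}$, which divides cleanly by $\log p$ --- is plausible but unverified and nontrivial; it would require a careful pass through the cap-set upper bound, the Kleinberg--Sawin--Speyer/Norin/Pebody lower bound, and the Fox--Lov\'asz removal-lemma argument to confirm. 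Similarly, your monotonicity argument for extending to all $\epsilon < 1/2$ runs into a subtlety on the lower-bound side: at $\epsilon$ near $1/2$ the target reads $m_p(\epsilon) \ge 2^{(\log p)/C}$, which grows (slowly) with $p$, whereas Theorem~\ref{thm:main-arith}'s lower bound $m_p(\epsilon) \ge \epsilon N_p(5\epsilon)/p$ becomes vacuous once $5\epsilon \ge 1$; a separate argument would be needed there (or one should read the corollary as holding for $\epsilon$ below a universal threshold, which is almost certainly the intent). None of this is a defect in your plan relative to the paper --- you are simply making explicit the places where the paper is being informal.
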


For generalizations from triangles to longer cycles in $\FF_p^n$, Lov\'asz and Sauermann \cite{LS19} extended the arithmetic diamond-free lemma with an optimal exponent, and Fox, Lov\'asz, and Sauermann \cite{FLS18} extended the arithmetic removal lemma with a polynomial dependence but left open the optimal exponent.

It is possible to extend the above results from triangles to many other arithmetic patterns (including cycles), though we do not pursue this direction here so as not to further complicate matters. See ~\cite{KSV12,Sha10} for how to deduce removal lemmas for systems of linear equations over $\FF_p$ from graph and hypergraph removal lemmas.

\subsection*{Organization}
In \cref{sec:graph-DFL-TFL} we prove the lower bound in \cref{thm:main-graph}, showing that the triangle-free lemma implies the diamond-free lemma with good bounds, as well as for general $H$.
In \cref{section:diamondsandtriangles}, we prove Theorem \ref{theoremdiamondtoremoval}, which shows that if the diamond-free lemma holds with subexponential bounds, then so does the triangle removal lemma. In \cref{sec:arith-DFL-TFL} we prove the arithmetic analogue of the above, namely the lower bound in \cref{thm:main-arith}, which is based on similar ideas but has a somewhat cleaner execution.
In \cref{sec:TFL-TRL} we prove the upper bounds in \cref{thm:main-graph,thm:main-arith} by showing that, both for the graph version and the arithmetic analogue, the triangle removal lemma and the weak regularity lemma imply the diamond-free lemma with good bounds.

\section{Diamond-free versus triangle-free: graphs} \label{sec:graph-DFL-TFL}

Now we prove the lower bound $e^{\epsilon N_{H}(C\epsilon)/C} \le M_H(\epsilon)$ in \cref{thm:main-graph}. 
Note that being $H$-homomorphism-free is equivalent to being $\core(H)$-homomorphism-free. 
So it suffices to consider $H = \core(H)$, which will be the case for the rest of this section.

\begin{construction}[Partial binary blow-up] \label{con:HG}
Suppose $H = \core(H)$ is connected and has more than one edge. 

Let $G$ be an $n$-vertex graph where every edge is contained in a unique homomorphic copy of $H$. Suppose there are exactly $m$ homomorphic copies of $H$ in $G$, and we enumerate them by $H_1, \dots, H_m$. 
We arbitrarily partition the edge-set of each $H_i$ into two non-empty sets, resulting in $H_i = H_i^{(0)} \cup H_i^{(1)}$.

Let $G'$ be a subgraph of the $2^m$-blow-up of $G$ constructed as follows.
The vertices of $G'$ are indexed by $V(G) \times \{0,1\}^m$. 
For each $i \in [m]$, $s \in \{0,1\}$, and $uv \in E(H_i^{(s)})$, the 
two vertices $(u, x_1, \dots, x_m)$ and $(v, y_1, \dots, y_m)$ in $G'$ are adjacent if $x_i = y_i = s$. These are the only edges in $G'$.
\end{construction}

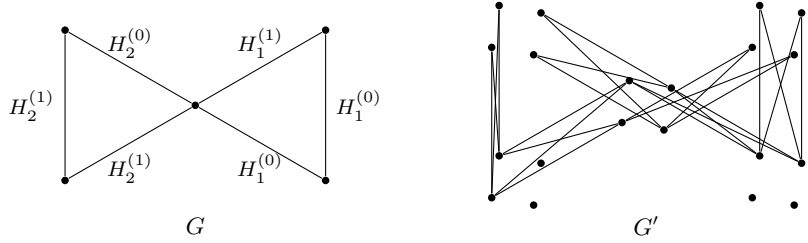
\begin{figure}
\begin{tikzpicture}[v/.style={circle, fill, inner sep = 1pt},scale=2,font=\scriptsize]
\node[v] (a) at (0,0) {};
\node[v] (b) at (30:1) {};
\node[v] (c) at (-30:1) {};
\node[v] (d) at (150:1) {};
\node[v] (e) at (210:1) {};
\draw (a) -- node[above] {$H_1^{(1)}$} (b);
\draw (a) -- node[below] {$H_1^{(0)}$} (c);
\draw (b) -- node[right] {$H_1^{(0)}$} (c);
\draw (a) -- node[above] {$H_2^{(0)}$} (d);
\draw (a) -- node[below] {$H_2^{(1)}$} (e);
\draw (d) -- node[left]  {$H_2^{(1)}$} (e);

\node at (0,-.8) {\footnotesize $G$};

\begin{scope}[xshift=3cm]
\foreach \x/\ss in 
{
	00/(35:.4),
	01/(125:.4),
	10/(-55:.4),
	11/(215:.4)}
{
	\node[v,shift=\ss] (a\x) at (0,0) {};
	\node[v,shift=\ss] (b\x) at (30:1) {};
	\node[v,shift=\ss] (c\x) at (-30:1) {};
	\node[v,shift=\ss] (d\x) at (150:1) {};
	\node[v,shift=\ss] (e\x) at (210:1) {};	
}
\foreach \x in {0,1}
{
	\foreach \y in {0,1}
	{
		\draw (a1\x) -- (b1\y);
		\draw (a0\x) -- (c0\y);
		\draw (b0\x) -- (c0\y);
		\draw (a\x0) -- (d\y0);
		\draw (a\x1) -- (e\y1);
		\draw (d\x1) -- (e\y1);
	}
}
\node at (0,-.8) {\footnotesize $G'$};
\end{scope}
\end{tikzpicture}
\caption{Illustration of the partial binary blow-up, \cref{con:HG}, for $H = K_3$.}
\label{fig:construction}
\end{figure}

See \cref{fig:construction} for an example of the construction.

\begin{lemma} \label{lem:G'-H-hom-free}
	The graph $G'$ obtained in \cref{con:HG} is $H$-homomorphism-free.
\end{lemma}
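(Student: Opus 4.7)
The plan is to argue by contradiction. Suppose $\phi : V(H) \to V(G')$ is a graph homomorphism, and write $\phi(a) = (g(a), x(a))$ for $a \in V(H)$, where $g : V(H) \to V(G)$ and $x : V(H) \to \{0,1\}^m$. The aim is to show that $(g, x)$ is forced into a configuration that clashes with the fact that each $E(H_j)$ has been partitioned into two \emph{nonempty} parts $E(H_j^{(0)})$ and $E(H_j^{(1)})$.

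First I would observe that $g$ is itself a graph homomorphism $H \to G$, and identify its image. For each $ab \in E(H)$ the adjacency $\phi(a)\phi(b) \in E(G')$ requires $g(a)g(b) \in E(H_i^{(s)}) \subseteq E(G)$ for some $i \in [m]$ and $s \in \{0,1\}$, so $g$ preserves edges. The image $g(H) \subseteq G$ is thus a subgraph of $G$ arising as the image of a graph homomorphism from $H$, i.e.\ a homomorphic copy of $H$, so since $H_1,\dots,H_m$ enumerate all such copies, $g(H) = H_j$ for some $j \in [m]$.

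Next I would pin down how $x$ interacts with the index $j$ using the uniqueness assumption. Fix $ab \in E(H)$. The edge $g(a)g(b)$ lies in $E(H_j)$, hence in exactly one of $E(H_j^{(0)}), E(H_j^{(1)})$, say $E(H_j^{(s_{ab})})$; and since every edge of $G$ lies in a unique homomorphic copy of $H$, it lies in no $E(H_i^{(\cdot)})$ with $i \neq j$. Reading off the definition of $E(G')$ then forces $x(a)_j = x(b)_j = s_{ab}$. Since $H$ is connected and $a \mapsto x(a)_j$ is constant across every edge of $H$, this function is globally constant, equal to some $s^\ast \in \{0,1\}$. Therefore every edge $g(a)g(b)$ of $g(H) = H_j$ lies in $E(H_j^{(s^\ast)})$, so $E(H_j) \subseteq E(H_j^{(s^\ast)})$, contradicting $E(H_j^{(1-s^\ast)}) \neq \emptyset$.

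The main point to handle carefully is the identification $g(H) = H_j$ in the first step, where the structural hypotheses (definition of homomorphic copy, enumeration of all homomorphic copies, and $H = \core(H)$) are used. After that, the remaining argument is a routine combination of the per-edge uniqueness of the homomorphic copy with the connectivity of $H$; no further obstacle is expected beyond careful bookkeeping of the indices $i, j$ and the bits $s$.
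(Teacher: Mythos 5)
Your proof is correct and follows essentially the same route as the paper's: project $\phi$ to a homomorphism $g = \psi$ into $G$, use the unique-copy hypothesis to identify $g(H)$ with a single $H_j$, and then exploit connectivity of $H$ to show the $j$-th binary coordinate of $\phi(\cdot)$ must be simultaneously constant yet take both values $0$ and $1$. Your write-up is somewhat more explicit than the paper's terse phrasing of the final connectivity step, but the underlying argument is identical.
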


\begin{proof}
	Suppose we have a homomorphism $\phi \colon H \to G'$.
	We obtain a homomorphism $\psi \colon H \to G$ by composing $\phi$ with the homomorphism $G' \to G$ obtained by projection on the first coordinate of $V(G') = V(G) \times \{0,1\}^m$.
	Since every edge of $G$ lies on a unique homomorphic copy of $H$, $\psi$ must map $H$ to some $H_i$ (notated as in \cref{con:HG}).
	Consider the $i$-th binary coordinate of $\phi(v)$ for $v \in V(H)$.
	This coordinate must equal to $0$ whenever $\psi(v)$ is an endpoint of an edge of $H_i^{(0)}$, and equal to $1$ whenever $\psi(v)$ is an endpoint of an edge of $H_i^{(1)}$. This is impossible to satisfy simultaneously since $H$ is connected. 
\end{proof}

Next, we show that the $G$ constructed above has no $\epsilon$-approximate homomorphism to an $H$-homomorphism-free graph on a small number of vertices.

\begin{proposition}\label{prop:no-approx-hom}
	Suppose $H = \core(H)$ and $\abs{E(H)} > 1$.
	Let $G$ be an $n$-vertex graph where every edge is contained in a unique homomorphic copy of $H$.
	Let $m$ be the number of homomorphic copies of $H$ in $G$.
	Let $G'$ be as in \cref{con:HG}. 
	
	If $\epsilon \le m/(32n^2)$, then there is no $\epsilon$-approximate homomorphism from $G'$ to an $H$-homomorphism-free graph on at most $\exp(c_H m/n)$ vertices, where $c_H>0$ is some constant that depends only on $H$.
\end{proposition}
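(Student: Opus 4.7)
The plan is to prove the contrapositive via an entropy argument that leverages the homomorphism-freeness of $F$. Suppose $\phi \colon V(G') \to V(F)$ is an $\epsilon$-approximate homomorphism into an $H$-homomorphism-free graph $F$ with $M := |V(F)|$; I aim to show $\log M \ge m/(32|E(H)|n)$. Sample $x \in \{0,1\}^m$ uniformly at random and consider the random map $\psi_x \colon V(G) \to V(F)$ given by $\psi_x(u) = \phi(u, x)$. Since the image of $x \mapsto \psi_x$ has size at most $M^n$, it suffices to show that the random variable $\Psi := \psi_x$ satisfies $H(\Psi) \ge m/(32|E(H)|)$: indeed, combining this with the identity $H(\Psi) = m - H(x \mid \Psi)$ (which holds because $\Psi$ is a deterministic function of $x$), one reduces the proof to upper-bounding the conditional entropy $H(x \mid \Psi)$.

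The key structural fact is that for every $i \in [m]$, the restriction $\Psi|_{V(H_i)}$ cannot simultaneously be a graph homomorphism from $H_i^{(0)}$ and from $H_i^{(1)}$ to $F$. If it were, then since $E(H_i^{(0)})$ and $E(H_i^{(1)})$ partition $E(H_i)$, this restriction would give a homomorphism $H_i \to F$, and composing with the surjective homomorphism $H \twoheadrightarrow H_i$ (which exists since $H_i$ is by construction the image of some homomorphism $H \to G$) would yield a homomorphism $H \to F$, contradicting $H$-homomorphism-freeness. This suggests the following decoder: output, for each $i$, the unique $s \in \{0,1\}$ such that $\Psi|_{V(H_i)}$ is a homomorphism from $H_i^{(s)}$ to $F$ (if such an $s$ exists) as the guess for $x_i$. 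The decoder correctly recovers $x_i$ whenever no diagonal edge $(u, x)(v, x)$ of $G'$ with $uv \in H_i^{(x_i)}$ is mapped by $\phi$ to a non-edge of $F$.

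A Fano-type inequality then bounds each $H(x_i \mid \Psi)$ by the binary entropy of the per-coordinate decoding error probability, so summing over $i$ gives a bound on $H(x \mid \Psi)$ in terms of the total count of bad diagonal edges of $G'$. Under the hypothesis $\epsilon \le m/(16n^2)$, combined with $|E(G)| \le m|E(H)|$, the global fraction of bad edges of $G'$ is at most $1/(4|E(H)|)$, and if the diagonal inherits this density the bookkeeping should produce $H(\Psi) \ge m/(32|E(H)|)$, completing the argument.

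The main technical obstacle is precisely controlling the number of bad "diagonal" edges $(u, x)(v, x)$. The approximate-homomorphism hypothesis only supplies a global bound $\le \epsilon n^2 4^m$ on bad edges of $G'$, and a priori an adversarial $\phi$ could concentrate all of this badness on the diagonal, even though the diagonal makes up only a $\sim 2^{1-m}$ fraction of $E(G')$. To circumvent this concentration, I expect the proof to introduce an averaging — most naturally over uniformly random shifts $x \mapsto x \oplus z$ of the hypercube coordinates applied to the decoder, or by exploiting the complete-biclique structure that $G'$ exhibits between the slices $\{x_i = 0\}$ and $\{x_i = 1\}$ inside each copy $H_i$ — so that the effective bad-edge density on the "diagonal seen by the decoder" matches the global density rather than the worst-case concentration. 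This averaging step, together with the accounting that tracks the constants $1/16$ and $1/32$, is the technical heart of the argument.
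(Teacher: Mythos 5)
Your high-level skeleton matches the paper's: both bound $H(\Psi)\le n\log M$ (equivalently, $\sum_{v,i} I(X_i;\phi|U_v)\le n\log M$) and then argue that most coordinates $x_i$ carry nontrivial information. But the gap you flag at the end is fatal to the plan as stated, and the fixes you speculate about do not close it. Your decoder reads only \emph{diagonal} edges $(u,x)(v,x)$; these form a $2^{-(m-1)}$-fraction of $E(G')$, while the hypothesis only gives a bound of $\epsilon n^2 4^m$ on the total number of bad edges. Since $\epsilon n^2 4^m$ can vastly exceed the total number of diagonal edges $|E(G)|\,2^{m-1}$, an adversarial $\phi$ may place every bad edge on the diagonal, making every decoder call fail. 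Shifting $x\mapsto x\oplus z$ does not help: the set of diagonal edges of $G'$ is shift-invariant, so you are still only reading the same adversarially-corrupted diagonal.

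The paper escapes this by abandoning the single-slice viewpoint entirely. For a fixed $i$ and each vertex $a\in V(H_i)$, instead of taking $\Psi$ at a common $x$, it independently samples a \emph{pair} $(u_{a,0},u_{a,1})\in U_{a,i\to0}\times U_{a,i\to1}$ drawn from the same $\phi$-fiber $Q_{j_a}$ — so $\phi(u_{a,0})=\phi(u_{a,1})$ by construction, and the decoder's "structural fact" applies without ever needing to read the edges over $H_i^{(1-x_i)}$. The crucial point is that the resulting edges $u_{a,s}u_{b,s}$ (for $ab\in E(H_i^{(s)})$) are generic, non-diagonal edges of $G'$: only the $i$-th coordinate is constrained. \cref{lem:good-tv} is precisely the technical device that makes this sampling scheme well-behaved: it shows that when the mutual information $I(x_i;\phi|U_a)$ is small, the fibers of $\phi$ within $U_a$ nearly bisect $\{x_i=0\}\cup\{x_i=1\}$, so $u_{a,s}$ is within total variation $4\eta$ of uniform on $U_{a,i\to s}$. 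That lets one compare $\PP(\phi(u_{a,s})\phi(u_{b,s})\notin E(F))$ to the \emph{uniform} bad-edge density over $\bigcup_{ab\in E(H_i)}U_a\times U_b$, which \emph{is} controlled by the approximate-homomorphism hypothesis. So the missing idea is not an averaging trick on top of your decoder but a replacement of it: a per-vertex, fiber-coupled sampling scheme, for which the nearly-bisected lemma supplies the necessary uniformity.
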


We first give some intuition for the proof. 
Suppose $\phi \colon V(G') \to V(F)$ is an $\epsilon$-approximate homomorphism and $F$ is $H$-homomorphism-free.
Consider the vertices and edges of $G'$ corresponding to the vertices of some $H_i$, which is a homomorphic copy of $H$ in $G$.
Consider the bipartition $\cP_i$ of $V(H_i) \times \{0,1\}^m \subseteq V(G')$ into two parts separated by the value of the $i$-th binary coordinate. 
If $\phi$ is nearly orthogonal to $\cP_i$ on $V(H_i) \times \{0,1\}^m$ (in the sense that the two associated random variables are nearly independent, as quantified by their mutual information), then the behavior of $\phi$ on $V(H_i) \times \{0,1\}^m$ would be similar to if the construction giving $G'$ had instead used a full $2^m$-blowup of $H_i$ (without taking a subgraph, but with edge weights $1/4$ for normalization).
It would then follow that many edges of $G'$ inside $V(H_i) \times \{0,1\}^m$ cannot map to $F$, since $F$ is $H$-homomorphism-free.

So $\phi$ cannot be nearly orthogonal to too many different $\cP_i$'s. We then show that this would  force its image $V(F)$ to be large. 
To illustrate this argument in an extreme scenario, consider a typical vertex of $G$ that lies in $cm/n$ homomorphic copies of $H$, each of which corresponds to some bipartition $\cP_i$. If $\phi$ were to refine $cm/n$ such $\cP_i$'s, then the image of $\phi$ has size at least $2^{cm/n}$.
We use entropy to give an approximate version of this argument.

\medskip

Given joint discrete random variables $X$ and $Y$, let $H(X)$ denote the (natural base) entropy of $X$, $H(X | Y) = H(X,Y) - H(Y)$ the conditional entropy, and $I(X; Y) = H(X) - H(X|Y)$ their mutual information. 

\begin{definition}
	Let $P_0$ and $P_1$ be two finite disjoint sets of equal size. We say that a non-empty subset $Q \subseteq P_0 \cup P_1$ is \emph{$\eta$-nearly bisected by} $\{P_0, P_1\}$ if the entropy of $\text{Bernoulli}(\abs{Q \cap P_0}/\abs{Q})$ is at least $\log 2 - \eta^2$. %(We drop the references to $\eta$, $P_0$, and $P_1$ when there are no ambiguities.)
\end{definition}

Every Bernoulli random variable $W$ satisfies (as can be verified by direct calculation or an application of Pinsker's inequality, e.g., see~\cite{TaoBlog-pinsker})
	\[
	\abs{\PP(W = 0) - \frac{1}{2}} \le \sqrt{\frac{\log 2 - H(W)}{2}}.
	\]
	Thus, every $Q$ that is $\eta$-nearly bisected by $\{P_0, P_1\}$ satisfies 
	\begin{equation}\label{eq:good-reciprocal}
	\abs{\frac{\abs{Q \cap P_0}}{\abs{Q}} - \frac{1}{2}} \le \frac{\eta}{\sqrt{2}}.
	\end{equation}

The next technical lemma says that, if $P_0 \cup P_1$ is a partition with $\abs{P_0} = \abs{P_1}$, and $\cQ$ is another nearly orthogonal partition of the same ground set, then the following two random processes are roughly equivalent: (i) choosing uniform random vertex of $P_0$
and (ii) first choosing a nearly bisected part $Q$ of $\cQ$ with probability proportional to $\abs{Q}$, and then picking a uniform element of $P_0 \cap Q$.

\begin{lemma}\label{lem:good-tv}
	Let $P_0 \cup P_1$ and $Q_1 \cup \cdots \cup Q_k$ be two partitions of some finite set $U$. Suppose $\abs{P_0} = \abs{P_1}$.
	
	Let $u$ be a uniform random element of $U$, and define random variables $X \in \{0,1\}$ and $Y \in [k]$ so that $u \in P_X \cap Q_Y$. Let $\eta < 1/5$. Suppose $	I(X;Y) \le \eta^3$.
	
	Let $J_{\mathrm{nb}} = \{ j \in [k] : Q_j~\text{is}~\eta\text{-nearly bisected by}~\{P_0, P_1\}\}$.
	Let $U_{\mathrm{nb}} = \bigcup_{j \in J_{\mathrm{nb}}} Q_j$. Then $\abs{U_{\mathrm{nb}}} \ge (1-\eta)\abs{U}$.
	
	Choose a random $j \in J_{\mathrm{nb}}$ where each $j \in J_{\mathrm{nb}}$ is chosen with probability proportional to $\abs{Q_j}$.  
	And then choose an element of $P_0 \cap Q_j$ uniformly at random. Let $\mu$ be the distribution of this random element.
	Then the total variation distance between $\mu$ and the uniform distribution on $P_0$ is at most $8\eta$.
\end{lemma}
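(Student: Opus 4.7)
The plan is to handle the two claims in \cref{lem:good-tv} separately, both as consequences of the mutual-information hypothesis $I(X;Y) \le \eta^3$.

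For the lower bound on $\abs{U_{\mathrm{nb}}}$, I would start from the observation that since $\abs{P_0} = \abs{P_1}$, the marginal of $X$ is uniform on $\set{0,1}$, so $H(X) = \log 2$. The conditional entropy $H(X \mid Y=j)$ is exactly the binary entropy of $\mathrm{Bernoulli}(\abs{Q_j \cap P_0}/\abs{Q_j})$, so by definition $j \in J_{\mathrm{nb}}$ is equivalent to $\log 2 - H(X \mid Y = j) \le \eta^2$. Setting $g_j := \log 2 - H(X \mid Y = j) \ge 0$, the identity $I(X;Y) = \sum_j (\abs{Q_j}/\abs{U}) g_j$ combined with Markov's inequality applied to the nonnegative random variable $g_Y$ yields $\PP(Y \notin J_{\mathrm{nb}}) \le \eta^3/\eta^2 = \eta$, which is exactly the claim.

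For the total-variation bound I would write the density of $\mu$ explicitly and compare it pointwise with the uniform density $\pi \equiv 1/\abs{P_0}$ on $P_0$. For $x \in P_0 \cap Q_j$ with $j \in J_{\mathrm{nb}}$, the construction gives $\mu(x) = 1/(p_j \abs{U_{\mathrm{nb}}})$ where $p_j := \abs{P_0 \cap Q_j}/\abs{Q_j}$. The inequality \cref{eq:good-reciprocal} recalled just before the lemma gives $\abs{p_j - 1/2} \le \eta/\sqrt{2}$, and the first part of the lemma gives $\abs{U_{\mathrm{nb}}}/\abs{U} \in [1 - \eta, 1]$. Using $\abs{P_0} = \abs{U}/2$ and the hypothesis $\eta < 1/5$, I would then bound the ratio $\mu(x)/\pi(x) = \abs{U}/(2 p_j \abs{U_{\mathrm{nb}}})$ to lie in an interval of the form $[1 - c\eta,\, 1 + c\eta]$ with an explicit small constant $c$, giving $\abs{\mu(x) - \pi(x)} \le c\eta \cdot \pi(x)$ for $x \in P_0 \cap U_{\mathrm{nb}}$. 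For $x \in P_0 \setminus U_{\mathrm{nb}}$, $\mu(x) = 0$ while $\pi$ assigns total mass at most $\abs{U \setminus U_{\mathrm{nb}}}/\abs{P_0} \le 2\eta$. Adding the two contributions and halving yields $\TV(\mu, \pi) \le 4\eta$.

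The argument is essentially a bookkeeping exercise; the only point that requires a little care is choosing the constant $c$ in the pointwise ratio bound small enough that, when combined with the $2\eta$ contribution coming from $P_0 \setminus U_{\mathrm{nb}}$, the total does not exceed $4\eta$. The hypothesis $\eta < 1/5$ is used precisely to control the quantities $1/(1 - \sqrt{2}\eta)$ and $1/(1 - \eta)$ by their first-order Taylor estimates, keeping the constants concrete.
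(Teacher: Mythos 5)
Your proposal is correct and follows essentially the same route as the paper: the first claim is Markov's inequality applied to the decomposition $I(X;Y) = \sum_j \PP(u \in Q_j)\bigl(\log 2 - H(X \mid u \in Q_j)\bigr)$, and the second comes from comparing $\mu$ with uniform via \cref{eq:good-reciprocal} together with the first claim. The only cosmetic difference is that the paper bounds $\abs{\mu(E) - \pi(E)}$ for events $E \subseteq P_0$ whereas you bound $\abs{\mu(x) - \pi(x)}$ pointwise and sum; these are the same estimate in two equivalent forms of total variation.
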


\begin{proof}
	We have
	\[
		I(X; Y) = H(X) - H(X|Y) = \log 2 - H(X|Y) 
		= \sum_{j=1}^k \PP(u \in Q_j) (\log 2 - H(X | u \in Q_j))
	\]
	Since $H(X | u \in Q_j) < \log 2 - \eta^2$ for every part $Q_j$ which is not $\eta$-nearly bisected by $\{P_0, P_1\}$, the above inequality combined with $I(X; Y) \le \eta^3$ implies
	\begin{equation} \label{eq:Ueq}
		\abs{U_{\mathrm{nb}}} \ge (1-\eta)\abs{U}.		
	\end{equation}
	Then, for any $E \subseteq P_0$,
	\begin{align*}
	\mu(E)
	&= \sum_{j \in J_{\mathrm{nb}}} \frac{\abs{Q_j}}{\abs{U_\mathrm{nb}}} \frac{\abs{E \cap Q_j}}{\abs{P_0 \cap Q_j}} 
	\\
	&= (2\pm 4\eta) \sum_{j \in J_{\mathrm{nb}}} \frac{\abs{E \cap Q_j}}{\abs{U_\mathrm{nb}}} 
	&& \text{\small [by \cref{eq:good-reciprocal}]}
	\\
	&= (2 + \eta \pm 4\eta) \sum_{j \in J_{\mathrm{nb}}} \frac{\abs{E \cap Q_j}}{\abs{U}}.
	&&\text{\small [by \cref{eq:Ueq}]}
	\end{align*}
	If the final sum had been taken over all $j$ (not just $j \in J_{\mathrm{nb}}$), then it would sum to exactly $\abs{E}/\abs{U}$. On the other hand, the $j$'s not in $J_{\mathrm{nb}}$ contribute at most $\eta$ to the sum due to \cref{eq:Ueq}. Thus this sum is at least $\abs{E}/\abs{U} - \eta$. 
	Therefore, $\mu(E)$ differs from $2\abs{E}/\abs{U} = \abs{U}/\abs{P_0}$ by at most $8\eta$, which gives the claimed upper bound on total variance distance.
\end{proof}

\begin{proof}[Proof of \cref{prop:no-approx-hom}]
	Let $\epsilon \leq m/(16n^2)$ and $\phi \colon G' \to F$ be an $\epsilon$-approximate homomorphism where $F$ is $H$-homomorphism-free.
	
	For $v \in V(G)$, let $U_v$ denote the set of vertices in $G'$ of the form $(v, x_1, \dots, x_m)$ for some $x_1, \dots, x_m \in \{0,1\}$.
	Let $U_{v, i \to 0} \subset U_v$ be those vertices with $x_i = 0$, and $U_{v, i \to 1} \subset U_v$ those vertices with $x_i = 1$. Then for each $i\in[m]$, there is a partition $U_v = U_{v, i \to 0} \cup U_{v, i \to 1}$.
	
	For $i \in [m]$ and $v \in V(G)$, write
	\[
		I_{i, v} := I(X; Y)
	\]
	where $X$ is the $i$-th binary coordinate of a uniform random vertex $u \in U_v$ and $Y \in V(F)$ is the image of the same $u$ under $\phi$.
	
	Let $\eta = 1/(32\abs{E(H)})$.
	
	\begin{figure}[t]
	    \centering
	    \begin{tikzpicture}[v/.style={circle, fill, inner sep = 1pt},scale=2,font=\scriptsize]

    \begin{scope}[shift={(-3.5,.5)}]
    \draw (0,0) node[v,label=below:$a$] {}
    	-- node[right] {$H_i^{(1)}$} (60:1) node[v,label=above:$b$] {} 
    	-- node[above] {$H_i^{(0)}$} (120:1) node[v,label=above:$c$] {}
    	-- node[left] {$H_i^{(0)}$} (0,0);
    \node at (0,-.5) {\normalsize $H_i$};
    
    \end{scope}

    \begin{scope}
    \draw (-1,-.5) rectangle (1,.5);
    \draw (0,-.5) -- (0,.5);	
    \draw (.02,.1) ellipse (.7cm and .3cm);
    \node at (-.8,.2) {$Q_{j_a}$};
    \node[v,label=below:$u_{a,0}$] (ua0) at (-.2,.1) {};
    \node[v,label=below:$u_{a,1}$] (ua1) at (.3,.2) {};
    \node at (-.5,-.4) {$U_{a,i \to 0}$};
    \node at (.5,-.4) {$U_{a, i \to 1}$};
    \end{scope}
    
    \begin{scope}[shift=(50:2)]
    \draw (-1,-.5) rectangle (1,.5);
    \draw (0,-.5) -- (0,.5);	
    \draw (.05,-.1) ellipse (.7cm and .3cm);
    \node at (-.7,.1) {$Q_{j_{b}}$};
    \node[v,label=above:$u_{b,0}$] (ub0) at (-.2,-.2) {};
    \node[v,label=above:$u_{b,1}$] (ub1) at (.3,-.1) {};
    \node at (-.5,.4) {$U_{b,i\to 0}$};
    \node at (.5,.4) {$U_{b,i \to 1}$};
    \end{scope}
    
    \begin{scope}[shift=(130:2)]
    \draw (-1,-.5) rectangle (1,.5);
    \draw (0,-.5) -- (0,.5);	
    \draw (-.05,-.1) ellipse (.7cm and .3cm);
    \node at (-.8,.2) {$Q_{j_{c}}$};
    \node[v,label=above:$u_{c,0}$] (uc0) at (-.2,-.3) {};
    \node[v,label=above:$u_{c,1}$] (uc1) at (.3,-.1) {};
    \node at (-.5,.4) {$U_{c,i\to 0}$};
    \node at (.5,.4) {$U_{c,i \to 1}$};
    \end{scope}
    
    \draw (ub0)--(uc0)--(ua0);
    \draw (ua1)--(ub1);
    
    \end{tikzpicture}

    \caption{Illustration for Claim $(\dagger)$ in the proof of \cref{prop:no-approx-hom} with $H = K_3$. The vertices in $Q_{j_a}$ all map to $j_a \in V(F)$ under $\phi$, and likewise with $Q_{j_b}$ and $Q_{j_c}$.}
	    \label{fig:approx-hom-proof}
	\end{figure}
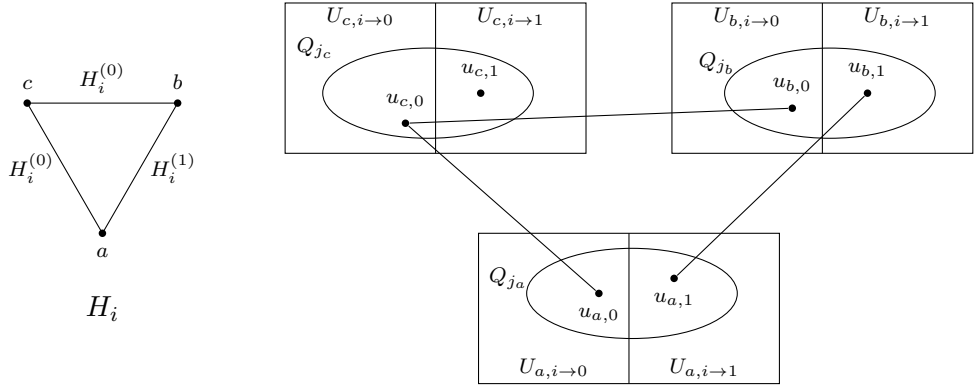
	
	\smallskip
	
	$(\dagger)$ Claim: For a fixed $i$, if $I_{i, v} \le \eta^3$ for all $v \in V(H_i)$, then at least $2^{2m-3}$ edges of $G'$ in $\bigcup_{ab \in E(H_i)} U_a \times U_b$ do not map to an edge of $F$ under $\phi$.
	
	\smallskip

	The reader may find \cref{fig:approx-hom-proof} helpful when following the proof of this claim. 
	The idea is that for each $a \in V(H_i)$ we are going to select a pair of vertices $(u_{a,0}, u_{a,1}) \in U_{a, i \to 0} \times U_{a, i \to 1}$ that agree on $\phi$.
	Then for each $ab \in E(H_i)$, one of $u_{a,0}u_{b,0}$ and $u_{a,1}u_{b,1}$ must be an edge of $G$ (which one depends on whether $ab \in E(H_i^{(0)})$ or $ab \in E(H_i^{(1)})$).
	If all these edges map to edges of $F$ under $\phi$, then we would obtain a homomorphic copy of $H$ in $F$, which is impossible. So one of these edges does not get mapped to an edge of $F$, which then implies the claim by an averaging argument.
	The averaging argument uses that each $u_{a,0}$ (and $u_{a,1}$) is nearly uniformly distributed on its domain by \cref{lem:good-tv}.
	
	Now we proceed with the actual proof.
	Independently for each $a \in V(H_i)$, consider the following process for choosing a pair of vertices $u_{a,0},u_{a,1} \in U_a$. 
	Recall the partition of $U_a$ into $U_{a,i\to 0} \cup U_{a,i \to 1}$ according to the value of the coordinate $x_i$.
	Also partition $U_a$ into $Q_j$'s according to fibers of $\phi$, i.e., set $Q_j = \phi^{-1}(j) \cap U_a$ for each $j \in V(F)$.
	As in \cref{lem:good-tv}, we choose a random part $Q_{j_a}$ that is $\eta$-nearly bisected by $\{U_{a,i\to 0}, U_{a, i \to 1}\}$, where each $Q_{j_a}$ is chosen with probability proportional to $\abs{Q_{j_a}}$. 
	We choose a random vertex $u_{a, 0} \in U_{a,i \to 0} \cap Q_{j_a}$ uniformly at random. Independently, we choose another random vertex $u_{a, 1} \in U_{a,i\to 1} \cap Q_{j_a}$ uniformly at random.
		
	For each $s \in \{0,1\}$ and each $ab \in E(H_i^{(s)})$, consider the edge $u_{a,s}u_{b,s}$ of $G'$ formed by the random vertices chosen earlier (both $u_{a,s}$ and $u_{b,s}$ have their $i$-th binary coordinate equal to $s$, so $u_{a,s}u_{b,s}$ is indeed an edge of $G'$ by \cref{con:HG}).  
	At least one of these $\abs{E(H)}$ edges of $G'$ cannot be mapped to $F$ under $\phi$, or else they would give a homomorphic copy of $H$ in $F$. It follows that
	\[
	\sum_{s \in \{0,1\}} \sum_{ab \in E(H_i^{(s)})} \PP\paren{\phi(u_{a,s})\phi(u_{b,s}) \notin E(F)} \ge 1.
	\]	
	Now choose $u'_{a,0} \in U_{a,i \to 0}$ and $u'_{a,1} \in U_{a,i \to 1}$ independently and uniformly at random for each $a \in V(H_i)$. By \cref{lem:good-tv}, 
	the total variation distance between these random variables satisfies (using the triangle inequality and independence of random variables)
	\begin{align*}
	d_\TV(u_{a,0}u_{b,0}, u'_{a,0}u'_{b,0})
	&\le d_\TV(u_{a,0}u_{b,0}, u'_{a,0}u_{b,0}) + d_\TV(u'_{a,0}u_{b,0}, u'_{a,0}u'_{b,0})
	\\
	&= d_\TV(u_{a,0}, u'_{a,0}) + d_\TV(u_{b,0}, u'_{b,0})
	\le 16\eta.
	\end{align*}
	Thus, combining the above two displayed inequalities,
	\begin{align*}
	\sum_{s \in \{0,1\}} \sum_{ab \in E(H_i^{(s)})} 
	\PP\paren{\phi(u'_{a,s})\phi(u'_{b,s}) \notin E(F)}
	&\ge 
	\sum_{s \in \{0,1\}} \sum_{ab \in E(H_i^{(s)})} 
	(\PP\paren{\phi(u_{a,s})\phi(u_{b,s}) \notin E(F)} - 16\eta)
	\\
	& \ge 1 - 16\abs{E(H)} \eta \ge \frac{1}{2}.
	\end{align*}
	The left-hand side, multiplied by $2^{2m-2}$, equals the number of edges in $\bigcup_{uv \in E(H_i)} U_u \times U_v$ that do not map to $F$ under $\phi$.
	This implies the Claim $(\dagger)$.
	
	\medskip

	For a fixed $v \in V(G)$, choose $X_1, \dots, X_m \in \{0,1\}$ independently and uniformly at random. Let $Y$ be the image under $\phi$ of the vertex $(v, X_1, \dots, X_m)$.
	We have
	\begin{align*}
	\sum_{i=1}^m I_{i,v} 
	 = \sum_{i=1}^m I(X_i; Y)
	& =\sum_{i =1}^m (H(X_i) - H(X_i|Y))
	\\
	&= m \log 2 - \sum_{i =1}^m H(X_i|Y)
	\\
	&\le m \log 2 - H(X_1, \dots, X_m | Y)
	\\
	&= H(Y) \le \log \abs{V(F)}.
	\end{align*}
	Summing over $v \in V(G)$ we obtain
	\[
	\sum_{v \in V(G)} \sum_{i =1}^m I_{i, v} \le n \log \abs{V(F)}.
	\]
	
	Since $\phi$ is an $\epsilon$-approximate homomorphism, at most $\epsilon n^2 2^{2m}$ edges of $G'$ do not map to an edge of $F$.
	Thus the hypothesis of Claim $(\dagger)$ is satisfied for at most $8 \epsilon n^2$ different $i \in [m]$. For all other $i$, one has $I_{i,v} > \eta^3$ for some $v \in V(H_i)$, and thus $\sum_{v \in V(G)} I_{i,v} \ge \eta^3$. Summing over all $i$, we obtain
	\[
	\sum_{v \in V(G)} \sum_{i =1}^m I_{i, v} \ge (m - 8\epsilon n^2)\eta = \frac{m - 8\epsilon n^2}{(32\abs{E(H)})^3} \ge \frac{m}{2(32 \abs{E(H)})^3}.
	\]
	Comparing the above two displayed inequalities, we obtain $\log \abs{V(F)} \ge c_H m/n$, as claimed.
\end{proof}

\begin{proof}[Proof of the lower bound in \cref{thm:main-graph}]
	Let $H$ be connected and non-bipartite and $0 < \epsilon < 1$. We would like to show that if $e^{\epsilon n/C} \ge M_H(\epsilon/C)$, where $C = C_H > 0$ is a sufficiently large constant, then any $n$-vertex graph $G$ where every edge lies in a unique homomorphic copy of $\core(H)$ has at most $\epsilon n^2$ edges.

	Since being $H$-homomorphism-free is equivalent to being $\core(H)$-homomorphism free, we can replace $H$ by its core, and assume from now on that $H = \core(H)$, which has more than one edge since $H$ was originally not bipartite. Suppose for contradiction that the number of homomorphic copies of $H$ in $G$ is $m > \epsilon n^2/\abs{E(H)}$.
	Obtain $G'$ using \cref{con:HG}. 
	Then by \cref{lem:G'-H-hom-free}, $G'$ is $H$-homomorphism-free. 
	Hence by \cref{thm:H3}(b), there exists an $\epsilon/C$-approximate homomorphism from $G'$ to an $H$-homomorphism-free graph on at most $M_H(\epsilon/C) \le e^{\epsilon n/C}$ vertices. 
	On the other hand, by \cref{prop:no-approx-hom}, making sure that $C$ is large enough so that $\epsilon/C \le m/(32n^2)$, there is no $\epsilon$-approximate homomorphism from $G'$ to an $H$-homomorphism-free graph on fewer than $e^{c_H m/n}$ vertices, which contradicts the previous sentence if $C$ is large enough.
\end{proof}

\section{Diamond-free versus triangle removal: graphs} 
\label{section:diamondsandtriangles}
In this section we prove Theorem \ref{theoremdiamondtoremoval}, following the techniques in \cite{FL17}. Assuming that $N_{DFL}(\epsilon)$ grows subexponentially in $\epsilon^{-1}$, it shows that $N_{DFL}(\epsilon)$ and $\delta_{TRL}(\epsilon)$ have similar growth. 

Let $g:(0,1] \longrightarrow \mathbb{R}^+$ satisfy that $g(\beta)$ increases as $\beta$ decreases, $g(\beta)\beta$ decreases as $\beta$ decreases, and $\sum_{i=1}^{\infty} 1/g(2^{-i}) < 1/2$. For example, we may take $g(x)=100\log (100/x)(\log \log (100/x))^2$. 

\begin{lemma} \label{lem1}
Suppose $G$ is a graph on $n$ vertices with $\delta n^3$ triangles and at least $\epsilon n^2$ edges need to be deleted to make $G$ triangle-free. Then $G$ has a subgraph with $\alpha n^3$ triangles for some $0<\alpha \leq \delta$ and no edge is in more than $g(\alpha/\delta)\alpha n/\epsilon$ triangles. 
\end{lemma}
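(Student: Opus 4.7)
The plan is to run a greedy edge-deletion process and show, via a dyadic analysis, that it must halt at a subgraph meeting the conclusion of the lemma; the only alternative would be to reduce $G$ all the way to triangle-free using fewer than $\epsilon n^2$ deletions, contradicting the hypothesis. Concretely, set $G_0 = G$, and at step $i$ let $t_i$ and $\alpha_i = t_i/n^3$ denote the triangle count and triangle density of $G_i$. If some edge of $G_i$ lies in more than $g(\alpha_i/\delta)\alpha_i n/\epsilon$ triangles of $G_i$, delete one such edge to obtain $G_{i+1}$; otherwise stop and return $G' = G_i$ together with $\alpha = \alpha_i$. Since $\alpha_i \le \alpha_0 = \delta$, any halting with $t_i > 0$ immediately yields the desired subgraph.

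To bound the number of iterations, I would partition the indices into dyadic phases by triangle count: phase $k \ge 0$ consists of those $i$ with $t_i \in (\delta n^3 \cdot 2^{-k-1},\, \delta n^3 \cdot 2^{-k}]$. The hypothesis that $g(\beta)\beta$ decreases as $\beta$ decreases gives $g(\alpha_i/\delta)\alpha_i > g(2^{-k-1}) \cdot 2^{-k-1} \cdot \delta$ throughout phase $k$, so each iteration in that phase destroys more than $g(2^{-k-1}) \cdot 2^{-k-1} \delta n/\epsilon$ triangles. Since the triangle count drops by at most $\delta n^3 \cdot 2^{-k}$ within phase $k$ (entry value $\le \delta n^3 \cdot 2^{-k}$, exit value $\ge 0$), phase $k$ contains fewer than $2\epsilon n^2/g(2^{-k-1})$ iterations.

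Summing over $k \ge 0$ and re-indexing $j = k+1$, the total number of deletions across the entire process is less than $2\epsilon n^2 \sum_{j \ge 1} 1/g(2^{-j}) < \epsilon n^2$ by the summability assumption on $g$. Consequently the process cannot reduce $G$ to triangle-free, so it must halt at some $G' = G_i$ with $t_i > 0$, producing the claimed subgraph with $\alpha = \alpha_i \in (0, \delta]$. The argument is essentially bookkeeping; the only subtleties are getting the direction of monotonicity of $g(\beta)\beta$ right and keeping the factor of $2$ consistent so that the $<1/2$ slack in the summability hypothesis closes the final bound to $<\epsilon n^2$.
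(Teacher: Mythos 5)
Your proof is correct and is essentially the same argument as the paper's: run the greedy deletion process (remove an edge exceeding the threshold, stop otherwise), bound the total number of deletions via a dyadic/halving analysis using the monotonicity of $g(\beta)\beta$ and the summability hypothesis, conclude that fewer than $\epsilon n^2$ edges are deleted so the process halts with triangles remaining. The only cosmetic difference is your explicit indexing of phases by $k$ versus the paper's direct halving accounting.
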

\begin{proof}
We repeatedly delete edges from $G$ one at a time in the most triangles until we arrive at the desired subgraph. Suppose that after removing a certain number of edges, the current remaining subgraph $G'$ has $\beta n^3$ triangles with $\beta \leq \delta$. If no edge is in more than $g(\beta/\delta)\beta n/\epsilon$ triangles in $G'$, then we will see that $G'$ is the desired subgraph as less than $\epsilon n^2$ edges are deleted in total so we have $\beta>0$. Otherwise, we delete the edge in $G'$ in the most triangles. 

To go from $\beta n^3$ triangles to at most $\beta n^3/2$ triangles, we remove at least $g(\beta/(2\delta))(\beta/2)n/\epsilon$ triangles for each edge deleted, so in total we delete at most 
\[
\frac{\beta n^3}{g(\frac{\beta}{2\delta})\frac{\beta n}{2 \epsilon}} = \frac{2\epsilon n^2}{g(\frac{\beta}{2\delta})}
\]edges in halving the total number of triangles from $\beta n^3$ to at most $\beta n^3/2$. In total, we delete at most $\sum_{i=1}^{\infty}2\epsilon n^2/g(1/2^i) < \epsilon n^2$ edges in this process. As the original graph $G$ we assumed required at least $\epsilon n^2$ edges to be deleted to make triangle-free, the remaining subgraph when the process terminates still has at least one triangle and satisfies the desired properties. 
\end{proof}

\begin{lemma}\label{lem2}
Suppose $G$ is a graph on $n$ vertices with $\alpha n^3$ triangles and each edge is in at most $t \leq n/100$ triangles. There is a subgraph of $G$ with $N=n/(9t)$ vertices and more than $\alpha N^3$ edges in which every edge is in exactly one triangle. 
\end{lemma}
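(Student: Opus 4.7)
The plan is a probabilistic deletion argument: sample $U \subseteq V(G)$ as a uniformly random $N$-subset, then keep only certain ``good'' triangles of $G[U]$. For each edge $e$ of $G[U]$, let $t_U(e)$ denote the number of triangles of $G[U]$ containing $e$, and call $e$ \emph{good} if $t_U(e) = 1$. Call a triangle of $G[U]$ \emph{good} if all three of its edges are good, let $S$ be the set of good triangles, and let $H = \bigcup_{T \in S} T$ viewed as a subgraph of $G$ on vertex set $U$ (pad with isolated vertices from $V(G) \setminus U$ if needed to reach exactly $N$ vertices).

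The structural properties of $H$ will then fall out of the definition. Two distinct good triangles cannot share an edge $e$, since that would force $t_U(e) \geq 2$ and make $e$ bad, so good triangles are pairwise edge-disjoint and $\abs{E(H)} = 3\abs{S}$. Furthermore, each $e \in E(H)$ lies in a unique good triangle $T$, and any triangle of $H \subseteq G[U]$ containing $e$ is necessarily a triangle of $G[U]$ containing $e$, of which there is only $T$ (by goodness of $e$). Hence every edge of $H$ lies in exactly one triangle of $H$, as required.

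It remains to show that $\abs{E(H)} > \alpha N^3$ on average. Let $T_U$ count the triangles of $G[U]$ and set $P_U := \sum_e \binom{t_U(e)}{2}$, which counts pairs of triangles of $G[U]$ sharing an edge. Hypergeometric sampling gives
\[
\EE[T_U] = \alpha n^3 \cdot \frac{N(N-1)(N-2)}{n(n-1)(n-2)} \;\geq\; \alpha N(N-1)(N-2),
\]
and, using $\sum_e t(e) = 3\alpha n^3$, $t(e) \leq t$, and $N = n/(9t)$,
\[
\EE[P_U] \;\leq\; \tfrac{3(t-1)\alpha n^3}{2} \cdot \tfrac{N^4}{n^4} \;\leq\; \tfrac{\alpha N^3}{6}.
\]
Since a triangle is bad iff it contains a bad edge, the number of bad triangles is at most $\sum_{e:\, t_U(e) \geq 2} t_U(e) \leq 2 P_U$ (using $k \leq 2\binom{k}{2}$ for $k \geq 2$), so $\abs{S} \geq T_U - 2 P_U$ deterministically and
\[
\EE[\abs{E(H)}] \;\geq\; 3\alpha N(N-1)(N-2) - \alpha N^3,
\]
which strictly exceeds $\alpha N^3$ whenever $N^2 - 9N + 6 > 0$. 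The hypothesis $t \leq n/100$ forces $N \geq 100/9 > 9$, so this holds with room to spare, and averaging produces the desired $U$.

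The main subtlety to get right is the structural verification in the second paragraph. The naive alternative ``take a maximal edge-disjoint family of triangles in $G[U]$'' fails, because three pairwise edge-disjoint triangles like $\{a,b,x\}$, $\{b,c,y\}$, $\{a,c,z\}$ can coexist in $G[U]$ with the triangle $\{a,b,c\}$, whose three edges are precisely the shared edges—producing a fourth triangle in the union through which the original edges lie in two triangles. Defining goodness intrinsically on $G[U]$ (rather than on the chosen family) sidesteps this: any such extra triangle of the union would certify $t_U(e) \geq 2$ for an edge of a supposedly good triangle, a contradiction.
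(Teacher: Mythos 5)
Your proof is correct and follows essentially the same approach as the paper: sample a uniformly random $N$-subset with $N = n/(9t)$, call a triangle of the induced subgraph good when none of its edges lies in a second triangle of that induced subgraph, and show by an expectation computation that the good triangles contribute more than $\alpha N^3$ edges. The paper runs the expectation as a conditional union bound (bounding $\PP[T\text{ bad}\mid T\subseteq S]<1/3$) rather than via your collision count $P_U$, but the two bookkeeping schemes are interchangeable; your explicit verification that the union of good triangles really satisfies the every-edge-in-exactly-one-triangle property is worth spelling out, since the paper asserts it without comment.
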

\begin{proof}
Pick a random subset $S$ of $N=n/(9t)$ vertices. Call a triangle $T$ of $G$ \emph{good} if it is a subset of $S$ but no edge of $T$ is in another triangle in $S$. The probability $T$ is a subset of $S$ is ${n/(9t) \choose 3}/{n \choose 3} \geq 1/(1000t^3)$. For each triangle $T$, there are at most $3t-3$ other vertices that together with an edge of $T$ make a triangle in $G$. Conditioned on $T$ being a subset of $S$, the probability that another particular vertex is in $S$ is at most $\frac{n/(9t) - 3}{n-3} \leq 1/(9t)$. Thus, conditioning on $T$ is in $S$, the probability that $T$ is good is at least $1-(3t-3)/(9t) > 2/3$. Hence, the expected number of good triangles in $S$ is at least $\frac{1}{1000t^3} \cdot \frac{2}{3} \cdot \alpha n^3 =2 \alpha(n/t)^3/3000$. The edges in the good triangles form a subgraph of $G$ with $N=n/(9t)$ vertices in which each edge is in exactly one triangle and there are at least $\alpha (n/t)^3/500>\alpha N^3$ edges.
\end{proof}

Now we prove \cref{theoremdiamondtoremoval}, which, as a reminder, says that for fixed $0<c<1$, 
if $N_{DFL}(\epsilon) \leq 2^{\epsilon^{-c+o(1)}}$ as $\epsilon \to 0$, then $\delta_{TRL}(\epsilon) \geq 2^{-\epsilon^{-c/(1-c)+o(1)}}$ as $\epsilon \to 0$.

\begin{proof}[Proof of Theorem \ref{theoremdiamondtoremoval}]
Let $g(x)=100\log (100/x)(\log \log (100/x))^2$. 
Let $G$ be a graph on $n$ vertices with $\delta n^3$ triangles such that at least $\epsilon n^2$ edges need to be removed to make $G$ triangle-free. By Lemma \ref{lem1}, $G$ has a subgraph $G'$ with  $\alpha n^3$ triangles for some $0<\alpha \leq \delta$ and no edge is in more than $t:=g(\alpha/\delta)\alpha n/\epsilon$ triangles.
Let $g=g(\alpha/\delta)$. 
So $\alpha/\delta=2^{g^{1-o(1)}}$ as $g \to \infty$.
Also let $\epsilon_0=\epsilon/\left(9g\right)$. 

Applying Lemma \ref{lem2} to $G'$, there is a subgraph $G''$ of $G'$ on $N=n/(9t)=\epsilon_0/\alpha$ vertices with more than $\alpha N^3=\epsilon_0 N^2$ edges and each edge is in exactly one triangle. 

The graph $G''$ shows that $N_{DFL}(\epsilon_0) \geq N=\epsilon_0/\alpha$. On the other hand, by assumption, $N_{DFL}(\epsilon_0) \leq 2^{\epsilon_0^{-c+o(1)}}$ as $g \to \infty$. These two bounds on $N_{DFL}(\epsilon_0)$  together imply $$\epsilon_0^{-1}2^{\epsilon_0^{-c+o(1)}} \geq \alpha^{-1} = (\delta/\alpha)\delta^{-1} =2^{g^{1-o(1)}}\delta^{-1}.$$ 
This bound gives 
\[
\delta^{-1} 
\le 2^{\epsilon_0^{-c+o(1)}-g^{1-o(1)}}
= 2^{(9g/\epsilon)^{c-o(1)}-g^{1-o(1)}}
\le 2^{-\epsilon^{-c/(1-c)+o(1)}}.
\]
The middle term  is maximized when $g=\epsilon^{-c/(1-c)+o_{\epsilon \to 0}(1)}$ and gives the last inequality.
\end{proof}

\section{Diamond-free versus triangle-free in $\FF_p^n$} 
\label{sec:arith-DFL-TFL}

In this section, we prove the lower bound in \cref{thm:main-arith} showing that, in $\FF_p^n$, the triangle-free lemma (\cref{thm:arith-TFL}) implies the diamond-free lemma (\cref{thm:arith-DFL}) with good quantitative bounds.
The idea is to construct a blow-up similar to that done in Section \ref{sec:graph-DFL-TFL} for graphs, though the proof is cleaner here since partitions into cosets are much more rigid than arbitrary partitions.

\begin{construction} \label{con:arith-expand}
	Suppose $x_1, \dots, x_l, y_1, \dots, y_l, z_1, \dots, z_l \in \FF_p^n$ satisfy $x_i + y_j + z_k = 0$ if and only if $i=j=k$.
	
	Let $X'_i$ denote the set of all elements of $\FF_p^{n+l}$ whose first $n$ coordinates form $x_i$, and whose $(n+i)$-th coordinate lies in $\{0, \dots, \floor{(p-2)/3}\}$. Let $X' = \bigcup_{i=1}^l X'_i$.
	
	Let $Y'_i$ denote the set of all elements of $\FF_p^{n+l}$ whose first $n$ coordinates form $y_i$, and whose $(n+i)$-th coordinate lies in $\{0, \dots, \floor{(p-2)/3}\}$. Let $Y' = \bigcup_{i=1}^l Y'_i$.
	
	Let $Z'_i$ denote the set of all elements of $\FF_p^{n+l}$ whose first $n$ coordinates form $z_i$, and whose $(n+i)$-th coordinate lies in  $\{1, \dots, \floor{(p-2)/3}+1\}$. Let $Z' = \bigcup_{i=1}^l Z'_i$.
\end{construction}

Note that $X' \times Y' \times Z'$ above is triangle-free. Indeed, the first $n$ coordinates of any such triangle must form $(x_i, y_i, z_i)$ for some $i$, but then the $(n+i)$-th coordinate cannot sum to zero.

\begin{proposition} \label{prop:arith-missed}
	Let $x_1, \dots, x_l, y_1, \dots, y_l, z_1, \dots, z_l \in \FF_p^n$ and $X', Y', Z' \subset \FF_p^{n+l}$ be as in \cref{con:arith-expand}.
	
	Let $\phi \colon \FF_p^{n+l} \to \FF_p^m$ be any linear map. 
	Let $X'', Y'', Z'' \subset \FF_p^m$.
	Suppose $X'' \times Y'' \times Z''$ is triangle-free.
	Then 
	\[
	\abs{X' \setminus \phi^{-1}(X'')}
	+ \abs{Y' \setminus \phi^{-1}(Y'')}
	+ \abs{Z' \setminus \phi^{-1}(Z'')}
	\ge (l - m)p^l/4.
	\]
\end{proposition}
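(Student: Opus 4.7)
The plan is to count triples $(a,b,c) \in X'_i \times Y'_i \times Z'_i$ with $\phi(a+b+c) = 0$ in $\FF_p^m$ --- call these \emph{fake triangles}. Each fake triangle is a non-triangle in $\FF_p^{n+l}$ (since $X' \times Y' \times Z'$ is triangle-free, as noted right after \cref{con:arith-expand}) whose image is a triangle in $\FF_p^m$; because $X'' \times Y'' \times Z''$ is triangle-free, at least one endpoint of each fake triangle must lie outside the corresponding preimage. Comparing a lower bound on the total count of fake triangles to an upper bound on per-endpoint multiplicity will yield the claim.

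Set $v_i := \phi(e_{n+i}) \in \FF_p^m$ and let $\phi_{\mathrm{last}} \colon \FF_p^l \to \FF_p^m$ send $\tau \mapsto \sum_i \tau_i v_i$, with kernel $K$ of dimension $l-r$, where $r := \dim\mathrm{span}\{v_1,\dots,v_l\} \le m$. Call an index $i$ \emph{good} if $v_i \in \mathrm{span}\{v_j : j \ne i\}$; equivalently some $\tau \in K$ has $\tau_i \ne 0$, in which case the linear functional $\tau \mapsto \tau_i$ on $K$ is surjective and so $\tau_i$ is uniformly distributed on $\FF_p$ as $\tau$ varies over $K$. A maximal linearly independent subset of the $v_i$'s has size $r$, so the set $I$ of good indices satisfies $|I| \ge l - r \ge l - m$.

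For $i \in I$ and $(a,b,c) \in X'_i \times Y'_i \times Z'_i$, the relation $x_i + y_i + z_i = 0$ makes the first $n$ coordinates of $a+b+c$ vanish, so $\phi(a+b+c) = 0$ is equivalent to $\tau \in K$, where $\tau \in \FF_p^l$ records the last $l$ coordinates of $a+b+c$. Writing $k := \lfloor(p-2)/3\rfloor + 1$, a direct count using the uniformity of $\tau_i$ on $K$ and the total $\sum_{s \in \FF_p} N(s) = k^3$, where $N(s)$ counts $(u_1,u_2,u_3) \in \{0,\dots,k-1\}^2 \times \{1,\dots,k\}$ summing to $s$, yields
\[
|T_i| \;=\; k^3\, p^{3l - r - 3}.
\]
Fixing any single endpoint $v$ in $X'_i$, $Y'_i$, or $Z'_i$ and redoing the analogous count with only two free summands (whose appropriate two-variable convolution sums to $k^2$) shows that $v$ lies in at most $k^2 p^{2l - r - 2}$ fake triangles.

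Since the $x_i$'s are forced distinct by the tricolor sum-free condition (if $x_i = x_{i'}$ with $i \ne i'$, then $x_i + y_{i'} + z_{i'} = 0$ contradicts the ``only if'' hypothesis), the sets $\{X'_i\}_i$ are pairwise disjoint; likewise for $\{Y'_i\}_i$ and $\{Z'_i\}_i$. Each fake triangle has at least one bad endpoint, so
\[
|X' \setminus \phi^{-1}(X'')| + |Y' \setminus \phi^{-1}(Y'')| + |Z' \setminus \phi^{-1}(Z'')| \;\ge\; \frac{\sum_{i \in I} |T_i|}{k^2 p^{2l - r - 2}} \;\ge\; (l-r)\, k\, p^{l-1},
\]
which is at least $(l-m)\, p^l / 4$ upon combining $l-r \ge l-m$ with the elementary inequality $k \ge p/4$ (check $p \in \{2,3\}$ by hand and use $k \ge (p-1)/3 \ge p/4$ for $p \ge 4$). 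The main obstacle I expect is isolating the correct per-endpoint bound $k^2 p^{2l-r-2}$ rather than the naive $|Y'_i|\cdot|Z'_i| = k^2 p^{2l-2}$: the saved factor of $p^r$ is precisely what cancels the $|K| = p^{l-r}$ factor in $|T_i|$ and keeps the deficit $(l-r)$ (and hence $(l-m)$) in the numerator rather than being dominated by a $p^m$ loss.
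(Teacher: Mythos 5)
Your proof is correct. It arrives at the same bound but via a global double-counting argument on ``fake triangles'' rather than the paper's local, per-index probabilistic coupling; it's worth noting the differences.

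The paper fixes a single vector $w$ in $\ker\phi$ supported on the last $l$ coordinates with at least $l-m$ nonzero entries (via a reduced-row-echelon argument on the kernel $K$ of your $\phi_{\mathrm{last}}$), declares $i$ good when $w_{n+i}\neq 0$, and then, for each good $i$, constructs a random triple in $X'_i\times Y'_i\times Z'_i$ by adding independent small multiples of $w$ to a base triple. The marginals are uniform on $X'_i$, $Y'_i$, $Z'_i$, so the probability-of-missing inequality $\PP(\text{miss }X'')+\PP(\text{miss }Y'')+\PP(\text{miss }Z'')\ge 1$ translates directly into a count of $\ge kp^{l-1}$ missed elements in $X'_i\cup Y'_i\cup Z'_i$. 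Your version declares $i$ good when $v_i\in\mathrm{span}\{v_j:j\neq i\}$ (equivalently, the coordinate functional $\tau\mapsto\tau_i$ is surjective on $K$), which is a weakly larger good set than the paper's, and counts all of $T_i$ rather than the one-dimensional sub-family generated by a single $w$. Your per-vertex count $k^2p^{2l-r-2}$ is in fact an equality for good $i$, and the cancellation with $|T_i|=k^3p^{3l-r-3}$ recovers exactly the $kp^{l-1}$ per-index contribution. Your explicit treatment of the disjointness of the $X'_i$ (via the distinctness of the $x_i$) is a nice touch; the paper uses this silently in the step ``summing over all good $i$.'' Both approaches need the observation you flag at the end --- that the per-vertex multiplicity carries a $p^{-r}$ saving matching the $p^{l-r}$ size of $K$ --- which is the crux of why the bound scales as $(l-m)p^l$ rather than degrading by a factor of $p^m$. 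The paper's version sidesteps this bookkeeping by working with a single $w$ and uniform marginals, which keeps the computation invisible, but the two arguments are giving the same estimate.
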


\begin{proof}
	Since the rank of $\phi$ is at most $m$, there is some $w = (w_1, \dots, w_{n+l})\in \FF_p^{n+l}$ with $\phi(w) = 0$ such that the first $n$ coordinates of $w$ are all zero and $w$ has at least $l-m$ nonzero coordinates.
	Say that $i \in [m]$ is ``good'' if $w_{n+i} \ne 0$.
	
	Fix a good $i$. Writing $X'_i$, $Y'_i$, $Z'_i$ as in \cref{con:arith-expand}, we claim that 
	\begin{equation}
		\label{eq:arith-good}
	\abs{ X'_i \setminus \phi^{-1}(X'')}
	+ \abs{ Y'_i \setminus \phi^{-1}(Y'')}
	+ \abs{ Z'_i \setminus \phi^{-1}(Z'')}
	\ge p^{l-1}(\floor{(p-2)/3}+1) \ge p^l/4.
	\end{equation}
	Summing over all good $i$ yields the claim.
		
	Let us prove \cref{eq:arith-good}.
	Say that $x' \in \FF_p^{n+l}$ lies \emph{above} $x \in \FF_p^n$ if their first $n$ coordinates agree.
	Choose $x', y', z' \in \FF_p^{n+l}$ uniformly at random among triples with $x'+y'+z' = 0$ such that $x',y',z'$ lie above $x_i, y_i, z_i$ respectively and furthermore the $(n+i)$-th coordinates of $x',y',z'$ are all zero.
	
	Let $a,b,c$ be independent uniform random elements from $\{0, \dots, \floor{(p-2)/3}\}$. Multiplying $w$ by a scalar, we may assume that its  $(n+i)$-th coordinate equals to $1$. Let
	\[
	x = x' + aw \in X'_i, \quad y = y'+bw \in Y'_i, \quad \text{and } z = z'+(c+1)w \in Z'_i.
	\]
	Note that $x$ is uniformly distributed in $X'_i$, and likewise with $y$ in $Y'_i$ and $z$ in $Z'_i$.
	
	Since $x'+y'+z'=0$ and $\phi(w) = 0$,  we have $\phi(x) + \phi(y) + \phi(z) = 0$. 
	Due to the hypothesis on $X'', Y'', Z''$, we cannot simultaneously have $\phi(x) \in X''$, $\phi(y) \in Y''$, $\phi(z) \in Z''$. Therefore,
	\[
	\PP(\phi(x) \notin X'')
	+ \PP(\phi(y) \notin Y'')
	+ \PP(\phi(z) \notin Z'')
	\ge 1.
	\]
	Multiplying both sides by $p^{l-1}(\floor{(p-2)/3}+1)$ establishes \cref{eq:arith-good}.
\end{proof}

\begin{proof}[Proof of the lower bound in \cref{thm:main-arith}]
	Suppose $x_1, \dots, x_l, y_1, \dots, y_l, z_1, \dots, z_l \in \FF_p^n$ satisfy $x_i+y_j+z_k=0$ if and only if $i=j=k$. 
	Let $m = m_p(\epsilon/5)$.
	It suffices to show that if $p^n \ge 5m/\epsilon$ then $l \le \epsilon p^n$.
	Indeed, this would imply $N_p(\epsilon)/p \le 5m/\epsilon$ since $N_p(\epsilon)$ is defined to be the smallest possible $p^n$ (with $n$ being a positive integer, which is why we have $N_p(\epsilon)/p$ on the left-hand side) so that we can guarantee the conclusion $l \le \epsilon p^n$.

	Apply \cref{con:arith-expand} to obtain sets $X', Y', Z' \subseteq \FF_p^{n+l}$. Since $X'\times Y' \times Z'$ is triangle-free, by \cref{thm:arith-TFL} there exist $X'',Y'',Z'' \subseteq \FF_p^{m}$ with  $X''\times Y''\times Z''$ triangle-free and a linear map $\phi \colon \FF_p^{n+l} \to \FF_p^m$ such that at most $(\epsilon/5) p^{n+l}$ elements from each of $X',Y',Z'$ do not get mapped to $X'',Y'',Z''$ respectively. 
	On the other hand, \cref{prop:arith-missed} tells us that at least $(l-m)p^l/4$ elements in total from $X',Y',Z'$ combined do not get mapped to $X'',Y'',Z''$ respectively. 
	So $(l-m)p^l/4 \le (\epsilon/5) p^{n+l}$, and hence
	$l \le (4\epsilon/5) p^n+ m \le \epsilon p^n$.
\end{proof}

\section{Triangle-free versus triangle removal} \label{sec:TFL-TRL}

\subsection{Sketch of the argument for graphs}
Here we sketch the proof of upper bound $M_H(\epsilon) \le e^{C\delta_H(\epsilon/C)^{-2}}$ in \cref{thm:main-graph}, which was proved in \cite[Section 3.3]{HKLLS20}. 
In the next subsection, we give the details of the analogous argument in the arithmetic setting.

First one shows that the graph removal lemma \cref{thm:H3}(a) can be extended to allow edge-weights on the $n$-vertex graph with edge-weights in $[0,1]$. When counting $H$ in a weighted graph, we weigh each homomorphic copy of $H$ by the product of the edge-weights.

\begin{theorem}[Weighted graph removal lemma]\label{thm:weighted-removal}
	For every $H$ and $\epsilon > 0$, there exists $\delta > 0$ such that for every $n$-vertex edge-weighted graph $G$ with edge-weights in $[0,1]$, if the weighted number of homomorphisms from $H$ to $G$ is less than $\delta n^{\abs{V(H)}}$, 
	then $G$ can be made $H$-homomorphism-free by removing edges with total weight at most $\epsilon n^2$.	
\end{theorem}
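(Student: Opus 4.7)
The plan is to deduce \cref{thm:weighted-removal} from the unweighted removal lemma \cref{thm:H3}(a) by a randomized blow-up. Given an edge-weighted $G$ on $n$ vertices with weighted homomorphism count below $\delta n^{\abs{V(H)}}$, I would first discard all edges of $G$ with weight below a threshold $\eta$; since there are fewer than $n^2$ such edges, this costs at most $\eta n^2$ of the total deletion budget, which can be absorbed into $\epsilon n^2$ by taking $\eta$ small. All remaining edges then have weight in $[\eta,1]$.

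Next I would construct an unweighted random blow-up $G^*$ on $N=nk$ vertices for a large parameter $k$: replace each $v\in V(G)$ by a cloud $C_v$ of $k$ vertices, and between distinct clouds $C_u,C_v$ include each of the $k^2$ potential edges independently with probability $w_{uv}$. Linearity of expectation gives $\EE[\operatorname{hom}(H,G^*)]=k^{\abs{V(H)}}\operatorname{hom}(H,G)<\delta N^{\abs{V(H)}}$, and standard concentration (the edge variables are mutually independent) yields $\operatorname{hom}(H,G^*)<2\delta N^{\abs{V(H)}}$ whp. Moreover, for every pair $(a,b)$ with $w_{ab}\ge\eta$, the random bipartite $G^*[C_a,C_b]$ is $o(1)$-regular with density $(1\pm o(1))w_{ab}$, again whp. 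Picking $\delta$ so that $2\delta\le\delta_H(\epsilon_1)$ for an auxiliary parameter $\epsilon_1$ proportional to $\epsilon/\abs{E(H)}$, the unweighted removal lemma produces a set $D$ of at most $\epsilon_1 N^2$ edges of $G^*$ whose deletion leaves $G^*\setminus D$ $H$-homomorphism-free.

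The key step is to pull $D$ back to $G$. Setting $d_{ab}:=\abs{D\cap(C_a\times C_b)}$, one has $\sum_{ab}d_{ab}\le\epsilon_1 k^2 n^2$. Call an edge $ab\in E(G)$ \emph{bad} if $d_{ab}\ge\lambda w_{ab}k^2$, where $\lambda:=1/(4\abs{E(H)})$, and let $S$ be the set of bad edges; Markov's inequality gives $\sum_{ab\in S}w_{ab}\le\epsilon_1 n^2/\lambda\le\epsilon n^2/2$. To conclude, I would show that the graph obtained from $G$ by removing $S$ together with the pre-trimmed small-weight edges is $H$-homomorphism-free. Assume otherwise, witnessed by some $\psi\colon V(H)\to V(G)$ with all edge weights positive (hence $\ge\eta$) and no edge in $S$. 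By the counting lemma applied to the regular random bipartites of $G^*$, the number of homomorphic copies of $H$ in $G^*$ of shape $\psi$ is at least $(1-o(1))k^{\abs{V(H)}}\prod_{uv\in E(H)}w_{\psi(u)\psi(v)}$; and a per-edge counting-lemma bound shows that deleting a single edge in $G^*[C_{\psi(u)},C_{\psi(v)}]$ destroys at most $(1+o(1))k^{\abs{V(H)}-2}\prod_{u'v'\ne uv}w_{\psi(u')\psi(v')}$ such copies. Since $uv\notin S$, summing over the $\abs{E(H)}$ edges of $H$ gives a total destruction of at most $(1+o(1))\lambda\abs{E(H)}k^{\abs{V(H)}}\prod w$, which is strictly smaller than the initial count for the chosen $\lambda$. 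Hence at least one homomorphic copy of $H$ survives in $G^*\setminus D$, a contradiction.

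The main technical obstacle is controlling the per-edge destruction count, which requires the $o(1)$-regularity of the random bipartites of $G^*$ (and the associated per-edge counting-lemma estimates) to hold simultaneously for every relevant cloud pair; this can be arranged by taking $k$ sufficiently large in terms of $n,\eta,\abs{V(H)}$ and applying a union bound over the $\binom{n}{2}$ pairs. The small-weight pre-trimming step is essential precisely because the counting-lemma-based bounds degrade when $w_{ab}$ is too close to zero.
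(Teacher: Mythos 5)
Your proposal follows the paper's second (improved) argument essentially verbatim: random blow-up $G^*$, apply the unweighted removal lemma to $G^*$, and pull back the deleted edge set to $G$ via a Markov-type threshold on the per-cloud-pair deletion counts $d_{ab}$; your constants ($\lambda = 1/(4\abs{E(H)})$) differ only cosmetically from the paper's ($1/(\abs{E(H)}+1)$).

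Two small remarks. First, the pre-trimming of edges with weight below $\eta$ is not actually needed: since the theorem is an existence statement with $G$ fixed and the blow-up factor $k$ a free parameter, $k$ may depend on $G$, so the minimum positive edge weight of $G$ is a fixed constant and the concentration bounds hold as $k\to\infty$ without any floor on the weights. (Your remark that the trimming is ``essential'' is therefore not correct, though including it is harmless.) Second, what you call a ``per-edge counting-lemma bound'' is not the standard counting lemma; the clean way to justify that every edge of $G^*[C_{\psi(u)},C_{\psi(v)}]$ lies in roughly $k^{\abs{V(H)}-2}\prod_{u'v'\ne uv}w_{\psi(u')\psi(v')}$ copies is a direct Chernoff concentration bound (with a union bound over all $O(k^2 n^2)$ edges), which is precisely what the paper invokes.
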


In~\cite{HKLLS20}, the weighted version of the removal lemma was derived from the unweighted version as follows. Starting with a weighted graph $G$, consider the unweighted graph $G'$ consisting of all edges whose edge-weight is at least $\epsilon/2$. If $G$ has $H$-homomorphism-density at most $\delta_H(\epsilon/2) (\epsilon/2)^{\abs{E(H)}}$, then $G'$ has $H$-homomorphism-density at most $\delta_H(\epsilon/2)$, so by the removal lemma, $G'$ can be made $H$-homomorphism-free by removing at most $\epsilon n^2/2$ edges.
Now we remove the same edges from $G$, along with all edges with individual weight less than $\epsilon/2$, and then the resulting weighted graph is $H$-homomorphism-free.

The above argument shows that in \cref{thm:weighted-removal}, one can take $\delta = \delta_H(\epsilon/2) (\epsilon/2)^{\abs{E(H)}}$. 
This is good for most purposes, though we sketch a different argument showing that one can take $\delta = \delta_H(\frac{\epsilon}{\abs{E(H)} + 1})$ in \cref{thm:weighted-removal} (the latter bound is superior when $\delta_H(\epsilon) = \epsilon^{\Theta(1)}$, which is the case if and only if $H$ is bipartite~\cite{Alon02}, but also in the arithmetic analogue below). 
See \cref{lem:weighted-arith-TRL} below for the details of a completely analogous argument in the arithmetic setting.

Let $G$ be a weighted $n$-vertex graph with $H$-homomorphism density less than $\delta = \delta_H(\frac{\epsilon}{\abs{E(H)} + 1})$.
Randomly blow $G$ up to an $mn$-vertex graph $G'$. This means replacing every edge $xy \in E(G)$ with edge-weight $w(x,y)$ by a random bipartite graph with $m$ vertices in each part and random edges appearing independently with probability $w(x,y)$. We view $G$ as fixed and consider $m \to \infty$.
Then with probability $1-o(1)$, the $H$-homomorphism density in $G$ is less than $\delta$. 
So by the graph removal lemma (\cref{thm:H3}(a)), one can delete at most $\epsilon m^2n^2/(\abs{E(H)} + 1)$ edges from $G'$ to make it $H$-homomorphism-free. 
For each edge $xy$ of $G$, delete it from $G$ if more than $w(x,y) m^2/(\abs{E(H)} + 1)$ edges sitting above it were deleted from $G'$.
This then deletes edges from $G$ with total weight at most $\epsilon n^2$. Furthermore, with probability $1-o(1)$, no homomorphic copy of $H$ remains. Indeed, suppose some homomorphic copy $H_0$ of $H$ were to remain. Consider a random copy of $H_0$ in $G'$ above $H_0$. A linearity of expectations argument (here we use that with high probability all edges of $G'$ between the same pair of parts lie in roughly the same number of copies of $H_0$, as can be verified by a Chernoff bound argument) shows that with positive probability one of these copies of $H_0$ does not contain any deleted edges, which violates that we had deleted edges from $G'$ and made it $H$-homomorphism-free.

Now we sketch the argument in \cite{HKLLS20} that derives \cref{thm:H3}(b) from \cref{thm:H3}(a) yielding the bound $M_H(\epsilon) \le e^{C \delta_H(\epsilon/C)^{-2}}$ in \cref{thm:main-graph}. 
Starting with an $H$-homomorphism-free graph $G$, we can apply the Frieze--Kannan weak regularity lemma to obtain a $\delta/C$-weak-regular partition $\cP$ of $G$ with $M = e^{O(\delta^{-2})}$ parts. Let $G/\cP$ be the corresponding reduced weighted graph whose vertices are the parts of the partition and weights being the edge densities between the corresponding pairs of parts.
By the counting lemma, the $H$-homomorphism-densities in $G$ and $G/\cP$ differ by $O_H(\delta/C)$.
We can choose the constant $C$ so that $G/\cP$ has $H$-homomorphism density at most $\delta$. 
Then \cref{thm:H3}(a) allows us to make the reduced graph $H$-homomorphism-free by removing weighted edges in $G/\cP$ corresponding to at most $\epsilon n^2$ edges in $G$.
Then the map from $V(G)$ to $\cP$ gives an $\epsilon$-approximate homomorphism from $G$ to an $H$-homomorphism-free graph with $M$ parts.

\subsection{Arithmetic analogue}

Now we provide the arithmetic analogue of the argument sketched above, thereby showing the upper bound $M_p(\epsilon) \le p^{27\delta_p(\epsilon/4)^{-2}}$ in \cref{thm:main-arith}.

Given a function $f \colon \FF_p^n \to \RR$, and a subspace $H \le \FF_p^n$, we write $f_H \colon \FF_p^n \to \RR$ for the function that is constant on every $H$-coset, so that on $x+H$ the value of $f_H$ equals to the average of $f$ on $x+H$. We say that $H$ is \emph{$\epsilon$-weakly-regular} for $f$ the $L^\infty$ norm of the Fourier transform of $f - f_H$ is at most $\epsilon$.
We say that $H$ is $\epsilon$-weakly regular for a set $X \subseteq \FF_p^n$ if it is so for its indicator function $f = 1_X$. 
Here the normalization of the Fourier transform is given by $\wh f(y) = \EE_x f(x) e^{-2\pi i (x \cdot y)/p}$.
Also we write $\|f\|_1 = \EE_x \abs{f(x)}$.

We recall the weak regularity lemma and the associated counting lemma, both of which are standard (e.g., see \cite[Section 2]{FP}). These are versions of Green's arithmetic regularity results~\cite{Gre05} analogous to the Frieze--Kannan weak regularity lemma~\cite{FK99}.

\begin{lemma}[Weak arithmetic regularity lemma] \label{lem:arith-wk-reg}
	Let $p$ be a prime and $\epsilon > 0$.
	For every $X,Y,Z \subseteq \FF_p^n$, there exists a subspace $H$ of $\FF_p^n$ of codimension at most $3 \epsilon^{-2}$ that is $\epsilon$-weakly-regular for each of $X,Y,Z$.
\end{lemma}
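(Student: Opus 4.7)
The plan is to run a standard energy increment argument on subspaces of $\FF_p^n$, applied simultaneously to the three indicator functions $1_X$, $1_Y$, $1_Z$. I will track, for each such $f$ and the current subspace $H$, the energy
\[
\mathcal{E}_H(f) := \EE_x \abs{f_H(x)}^2.
\]
The two facts I need are: (a) $\mathcal{E}_H(f)$ does not decrease when $H$ is refined to a smaller subspace, and $\mathcal{E}_H(1_S) \le 1$ for every $S \subseteq \FF_p^n$; (b) if $H$ is not $\epsilon$-weakly regular for $f$, then there is a subspace $H' \le H$ with $\mathrm{codim}(H') \le \mathrm{codim}(H) + 1$ such that $\mathcal{E}_{H'}(f) > \mathcal{E}_H(f) + \epsilon^2$.

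Both facts drop out of a short Fourier calculation. Using $f_H(x) = \EE_{h \in H} f(x+h)$ together with $\EE_{h \in H} e^{2\pi i (h \cdot y)/p} = \one[y \in H^\perp]$, I would first verify the identity $\wh{f_H}(y) = \wh f(y)\,\one[y \in H^\perp]$. By Parseval this yields $\mathcal{E}_H(f) = \sum_{y \in H^\perp} \abs{\wh f(y)}^2$, which makes (a) immediate: $H' \le H$ forces $H^\perp \subseteq (H')^\perp$, and for $f = 1_S$ one has $\mathcal{E}_H(f) \le \snorm{f}_2^2 = \abs{S}/p^n \le 1$. For (b), the failure of $\epsilon$-weak regularity means $\abs{\wh f(y)} > \epsilon$ for some $y \notin H^\perp$, since $\wh{f - f_H}(y)$ equals $\wh f(y)$ for $y \notin H^\perp$ and vanishes otherwise; taking $H' := H \cap \{x : y \cdot x \equiv 0 \pmod p\}$ places $y$ into $(H')^\perp$ and raises the codimension by at most one, and Parseval then gives an energy jump of at least $\abs{\wh f(y)}^2 > \epsilon^2$.

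Finally I would iterate: start with $H_0 = \FF_p^n$; while there exists $f \in \{1_X, 1_Y, 1_Z\}$ for which the current $H_i$ is not $\epsilon$-weakly regular, pick such an $f$ and a witnessing character $y$, and pass to $H_{i+1} := H_i \cap \{x : y \cdot x \equiv 0 \pmod p\}$ as in (b). By the monotonicity in (a), the other two energies cannot decrease under this refinement, while by (b) the combined energy $\mathcal{E}_{H_i}(1_X) + \mathcal{E}_{H_i}(1_Y) + \mathcal{E}_{H_i}(1_Z)$ jumps by more than $\epsilon^2$ at each step. Since this combined quantity is bounded above by $3$, the procedure must terminate within $3\epsilon^{-2}$ iterations, at which point $H$ is $\epsilon$-weakly regular for each of $X, Y, Z$ and has codimension at most $3\epsilon^{-2}$. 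I do not foresee a real obstacle here; the only modest point of care is coordinating the three functions simultaneously, which is absorbed by summing their energies.
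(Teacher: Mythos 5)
Your proof is correct, but it takes a genuinely different route from the paper's. The paper constructs $H$ in a single step: it takes $H$ to be the common kernel of all nontrivial characters $y$ for which one of $\abs{\wh{1_X}(y)}$, $\abs{\wh{1_Y}(y)}$, $\abs{\wh{1_Z}(y)}$ is at least $\epsilon$. Parseval caps the number of such characters at $3\epsilon^{-2}$, which bounds the codimension, and the identity $\wh{f - f_H}(y) = \wh f(y)\,\one[y \notin H^\perp]$ then gives $\epsilon$-weak regularity immediately. You instead run the classical energy increment iteration, adding one large character to $H^\perp$ at a time until none remain, and you bound the step count by the same Parseval budget. The two arguments use identical ingredients (the Fourier identity $\wh{f_H} = \wh f \cdot \one_{H^\perp}$ and Parseval), and the direct construction can be viewed as "unrolling" your iteration: in the weak arithmetic setting the large characters of $f$ are determined up front and do not change as $H$ shrinks, so there is no need to iterate. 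Your approach buys generality — it is the template that survives the passage to strong arithmetic regularity, where the set of relevant characters does change at each stage — at the cost of being slightly more elaborate than necessary for this particular lemma. One small bookkeeping point worth stating explicitly: the combined energy starts at $\sum_f \abs{\wh f(0)}^2 \ge 0$ and is capped at $3$, so the number of iterations is strictly less than $3\epsilon^{-2}$, and since each step raises the codimension by at most one, the final codimension is at most $3\epsilon^{-2}$ as required.
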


A quick proof sketch: take $H$ to be the subspace orthogonal to all non-trivial characters with Fourier transform magnitude at least $\epsilon$ for any of $X,Y,Z$. There are at most $\epsilon^{-2}$ such characters for $X$ by Parseval, and likewise with $Y$ and $Z$. 

For $f,g,h \colon \FF_p^n \to [0,1]$, let us denote their triangle density by
\[
\Lambda(f,g,h) := \EE_{x,y, z\in \FF_p^n : x+y+z=0} f(x)g(y) h(z).
\]
The following counting lemma is also standard (e.g., see \cite[Lemma 4]{FP} for a proof).

\begin{lemma}[Counting lemma] \label{lem:arith-counting}
	Let $p$ be a prime and $\epsilon > 0$.
	For every $f,g,h \colon \FF_p^n \to [0,1]$ and a subspace $H$ of $\FF_p^n$ that is $\epsilon$-regular with respect to each of $f,g,h$, then
	\[
	\abs{\Lambda(f,g,h) - \Lambda(f_H,g_H,h_H)}\le 3\epsilon.
	\]
\end{lemma}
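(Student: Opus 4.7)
The plan is to pass to the Fourier side and exploit that $\epsilon$-weak-regularity of $H$ for $f$ is, by definition, the $L^\infty$ bound $\snorm{\wh f - \wh{f_H}}_\infty \le \epsilon$. With the paper's normalization, expanding the indicator of $x+y+z=0$ as an average of additive characters $\one_{x+y+z=0} = p^{-n}\sum_\xi e^{2\pi i \xi\cdot(x+y+z)/p}$ and collapsing the $x,y$ averages yields the standard identity
\[
\Lambda(f,g,h) = \sum_{\xi \in \FF_p^n} \wh f(\xi)\wh g(\xi)\wh h(\xi),
\]
and likewise with $f,g,h$ replaced by $f_H, g_H, h_H$.

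Next, I would identify $\wh{f_H}$ explicitly. Since $f_H(x) = \EE_{h\in H} f(x+h)$, a direct computation gives $\wh{f_H}(\xi) = \wh f(\xi)$ for $\xi \in H^\perp$ and $\wh{f_H}(\xi) = 0$ otherwise, because the character sum $\EE_{h\in H} e^{-2\pi i \xi\cdot h/p}$ is $1$ if $\xi$ annihilates $H$ and $0$ otherwise. The weak-regularity hypothesis then reads $\abs{\wh f(\xi)} \le \epsilon$ for every $\xi \notin H^\perp$ (and analogously for $g$ and $h$); moreover $\abs{\wh{f_H}(\xi)} \le \abs{\wh f(\xi)} \le 1$ pointwise, since $\snorm{f}_\infty \le 1$.

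The main step is a three-term telescope,
\[
\wh f\wh g\wh h - \wh{f_H}\wh{g_H}\wh{h_H} = (\wh f - \wh{f_H})\wh g\wh h + \wh{f_H}(\wh g - \wh{g_H})\wh h + \wh{f_H}\wh{g_H}(\wh h - \wh{h_H}),
\]
summed over $\xi$. Each summand is bounded by pulling out an $L^\infty$ factor (at most $\epsilon$) and applying Cauchy--Schwarz together with Parseval, e.g.\ $\sum_\xi \abs{\wh g(\xi)}\abs{\wh h(\xi)} \le \paren{\EE g^2}^{1/2}\paren{\EE h^2}^{1/2} \le 1$, using $g,h \in [0,1]$ so $\EE g^2 \le \EE g \le 1$. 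Combining the three contributions yields the claimed bound of $3\epsilon$. I do not anticipate any genuine obstacle; the only care needed is in tracking the normalization of the Fourier transform, and in using the pointwise bounds $\abs{\wh{f_H}}, \abs{\wh{g_H}} \le 1$ to control the second and third summands.
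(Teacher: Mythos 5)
Your proof is correct and is exactly the standard Fourier-analytic argument; the paper itself omits the proof and cites Fox--Pham, Lemma 4, which runs along the same lines (Fourier identity for $\Lambda$, the computation $\wh{f_H} = \wh{f}\cdot\one_{H^\perp}$ so that weak regularity reads $\abs{\wh f(\xi)}\le\epsilon$ off $H^\perp$, a three-term telescope, and Cauchy--Schwarz plus Parseval on each summand). One small remark: the pointwise bounds $\abs{\wh{f_H}},\abs{\wh{g_H}}\le 1$ are not actually needed, since Cauchy--Schwarz together with Parseval ($\sum_\xi\abs{\wh{f_H}(\xi)}^2 = \EE\abs{f_H}^2\le 1$) already controls the second and third summands.
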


We need the following weighted version of the arithmetic triangle removal lemma. The proof follows the second argument sketched in the previous subsection (the first argument sketched there, by considering edges with weight at least $\epsilon/2$, would be too lossy).

\begin{theorem}[Weighted arithmetic triangle removal lemma] \label{lem:weighted-arith-TRL}
	If $f,g,h \colon \FF_p^n \to [0,1]$ are such that $\Lambda(f,g,h) < \delta_p(\epsilon/4)$,
	 then there exist $f',g,h' \colon \FF_p^n \to [0,1]$ such that $\Lambda(f',g',h') = 0$ and $\norm{f-f'}_1$, $\norm{g-g'}_1$, $\norm{h-h'}_1 \le \epsilon$.
\end{theorem}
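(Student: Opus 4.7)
The plan is to mimic the random blow-up argument sketched in the graph case, lifting the weighted data $(f, g, h)$ to a random unweighted triple of sets in a larger vector space, applying the (unweighted) arithmetic triangle removal lemma there, and then projecting back to $\FF_p^n$.

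Concretely, I will fix a large integer $k$ (to be chosen at the end) and, for each $(x, a) \in \FF_p^n \times \FF_p^k$, independently include $(x, a)$ in a random set $\tilde X \subseteq \FF_p^{n+k}$ with probability $f(x)$; define $\tilde Y, \tilde Z$ analogously using $g, h$. The first key step is to establish, via Chernoff concentration plus a union bound over $\FF_p^{n+k}$, that for $k$ large enough the following three events hold simultaneously with positive probability: (a) $\Lambda_{\FF_p^{n+k}}(\tilde X, \tilde Y, \tilde Z) < \delta_p(\epsilon/4)$; (b) for every $(x, y, z) \in (\FF_p^n)^3$ with $x+y+z = 0$ and $f(x)g(y)h(z) > 0$, the number of triples in $\tilde X \times \tilde Y \times \tilde Z$ summing to zero and projecting to $(x, y, z)$ under $\FF_p^{n+k} \to \FF_p^n$ is within a $(1 \pm 1/100)$ factor of its mean $f(x) g(y) h(z) p^{2k}$; and (c) for every such $(x, y, z)$ and every $a^* \in \{x\} \times \FF_p^k$ (symmetrically for $b^*$ above $y$ and $c^*$ above $z$), the number of pairs $(b^*, c^*) \in \tilde Y \times \tilde Z$ with $a^* + b^* + c^* = 0$ and projecting to $(y, z)$ is within $(1 \pm 1/100)$ of $g(y) h(z) p^k$.

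Next, fixing such a realization, I will apply \cref{thm:arith-TRL} in $\FF_p^{n+k}$ to produce subsets $X'' \subseteq \tilde X$, $Y'' \subseteq \tilde Y$, $Z'' \subseteq \tilde Z$ with $X'' \times Y'' \times Z''$ triangle-free and with deletion sets $D_X := \tilde X \setminus X''$, $D_Y, D_Z$ each of size at most $(\epsilon/4) p^{n+k}$. I then define
\[
f'(x) := \begin{cases} f(x), & \text{if } |D_X \cap (\{x\} \times \FF_p^k)| \leq f(x)\, p^k / 4, \\ 0, & \text{otherwise,}\end{cases}
\]
and analogously $g', h' \colon \FF_p^n \to [0, 1]$. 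The bound $\|f - f'\|_1 \leq 4|D_X|/p^{n+k} \leq \epsilon$ is then immediate by a Markov-style inequality (indeed, whenever $f'(x) \neq f(x)$, we have $f(x) \leq 4|D_X \cap (\{x\} \times \FF_p^k)|/p^k$), and similarly for $g', h'$. To verify $\Lambda(f', g', h') = 0$, I will argue by contradiction: if some triple $(x, y, z)$ with $x+y+z=0$ had $f'(x) g'(y) h'(z) > 0$, then (b) gives about $f(x) g(y) h(z) p^{2k}$ triangles in $\tilde X \times \tilde Y \times \tilde Z$ above $(x, y, z)$, while (c) combined with the deletion-threshold bounds $|D_X \cap \text{fiber}(x)| \leq f(x) p^k/4$ (and symmetric) caps the number of such triangles using a deleted element by roughly $3 \cdot \tfrac{1}{4} f(x) g(y) h(z) p^{2k}$. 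The surplus forces at least one genuine triangle in $X'' \times Y'' \times Z''$ above $(x,y,z)$, contradicting triangle-freeness.

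The main technical obstacle will be the simultaneous concentration in (a)–(c), which must hold uniformly over the exponentially many tuples $(x, y, z, a^*)$; the Chernoff tails degrade as the underlying probabilities shrink. Since $f, g, h$ take only finitely many values, their minimum nonzero values are strictly positive, and taking $k$ sufficiently large (depending on $f, g, h, \epsilon, n, p$) will tame the union bound. The remaining delicate-but-routine point is to tune the threshold constant (here $1/4$) so that the "bad" mass $3 \cdot \tfrac{1}{4}$ stays strictly below $1$, leaving a positive surplus of good triangles and, independently, so that the resulting factor $4$ in $\|f - f'\|_1 \leq 4|D_X|/p^{n+k}$ multiplied by the removal-lemma output $\epsilon/4$ lands exactly at the target $\epsilon$.
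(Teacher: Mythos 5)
Your proposal is correct and takes essentially the same approach as the paper's proof: lift $(f,g,h)$ to random sets in $\FF_p^{n+k}$, apply the unweighted arithmetic removal lemma, project deletions back to $\FF_p^n$ via a $1/4$-threshold, and argue by a codegree/counting comparison that no triangle of $(f',g',h')$ can survive. The only cosmetic difference is that the paper phrases the concentration via an $m\to\infty$ asymptotic and a total-variation/second-moment argument for the marginal of a uniformly random surviving triangle, whereas you unpack the same content into explicit $1\pm 1/100$ multiplicative concentration statements (b) and (c) with a Chernoff plus union bound over the finitely many fibers; both handle the nonindependence in (b) the same way in spirit (condition on the fiber sizes and reduce to the codegree estimate (c)).
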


\begin{proof}
	In this proof, we fix $f,g,h \colon \FF_p^n \to [0,1]$ with $\Lambda(f,g,h) < \delta_p(\epsilon/4)$.
	All the asymptotics are with respect to a new parameter $m \to \infty$.
	
	We say that $y \in \FF_p^{n+m}$ is \emph{above} $x \in \FF_p^n$ if the first $n$ coordinates of $y$ form $x$.
	
	Let $X$ be a random subset of $\FF_p^{n+m}$ obtained by independently keeping each element above every $x \in \FF_p^n$ with probability $f(x)$.
	Likewise define $Y,Z \subseteq \FF_p^{n+m}$ from $g,h$ respectively.
	
	With high probability (meaning probability $1-o(1)$ as $m\to \infty$), $X \times  Y \times Z$ has at most $\delta_p(\epsilon/4) p^{2(n+m)}$ triangles, so by the arithmetic triangle removal lemma (\cref{thm:arith-TRL}), we can remove all triangles by deleting at most $\epsilon p^{n+m}/4$ elements from each of $X,Y,Z$.
	
	For each $x \in \FF_p^n$, we set $f'(x) = 0$ if we deleted least $f(x) p^m/4$ elements of $X$ above $x$, and set $f'(x) = f(x)$ otherwise. Then the number of elements deleted from $X$ is at least $\sum_{x \in \FF_p^n} (f-f')(x)p^m/4$. 
	Thus $\norm{f-f'}_1 \le \epsilon$. 
	Similarly define $g'$ and $h'$.
	
	Finally, we claim with high probability, $\Lambda(f',g',h') = 0$. 
	Suppose otherwise. Fix some $x+y+z = 0$ in $\FF_p^n$ with $f'(x),g'(y),h'(z) > 0$.
	Among all triples $(x',y',z') \in X\times Y \times Z$ sitting above $(x,y,z)$ and satisfying $x'+y'+z' = 0$, choose a triple uniformly at random. One of $x',y',z'$ must be deleted to make $X,Y,Z$ triangle-free, so
	\[
	\PP(x' \text{ is deleted})
	+	\PP(y' \text{ is deleted})
	+	\PP(z' \text{ is deleted}) \ge 1.
	\] 
	On the other hand, the total variation distance between $x'$ and a uniform random element of $X$ above $x$ is $o(1)$ with high probability (e.g., a second moment argument shows that almost all $x'$ lies in nearly the same number of such triples $(x',y',z')$). So if $r_x$ is the fraction of elements above $x$ that are deleted, then $r_x + r_y + r_z \ge 1-o(1)$ with high probability, thereby contradicting $r_x,r_y,r_z \le 1/4$.
\end{proof}

\begin{proof}[Proof of the upper bound in \cref{thm:main-arith}]
	We want to show that the arithmetic triangle-free lemma (\cref{thm:arith-TFL}) holds with some $m \le 27 \delta^{-2}$, where $\delta := \delta_p(\epsilon/4)$.
	
	Let $X,Y,Z \le \FF_p^n$ be such that $X\times Y \times Z$ is triangle-free. 
	Applying the arithmetic weak regularity lemma (\cref{lem:arith-wk-reg}), we find a subspace $H$ of $\FF_p^n$ with codimension $m \le 27 \delta^{-2}$ that is $\delta/3$-weakly-regular to each of $X,Y,Z$. 
	
	Let $\phi \colon \FF_p^n \to \FF_p^m$ be a linear map with kernel $H$. 
	Define $f,g,h \colon \FF_p^m \to [0,1]$ by setting, for each $x \in \FF_p^m$, $f(x) = \abs{\phi^{-1}(x) \cap X}/p^{n-m}$. In other words, $f(x)$ is the fraction of the coset $H + \phi^{-1}(x)$ that belongs to $X$. Likewise define $g$ and $h$ based on $Y$ and $Z$.
	
	Applying the counting lemma (\cref{lem:arith-counting}),
	\[
	\Lambda(f,g,h) = \Lambda((1_X)_H,(1_Y)_H,(1_Y)_H)
	\le \Lambda(1_X,1_Y,1_Y) + \delta
	= \delta.
	\]
	By the weighted arithmetic triangle removal lemma, \cref{lem:weighted-arith-TRL}, there are $f',g',h' \colon \FF_p^m \to [0,1]$ so that $\Lambda(f',g',h') = 0$ and $\norm{f-f'}_1$, $\norm{g-g'}_1$, $\norm{h-h'}_1 \le \epsilon$. The conclusion of \cref{thm:arith-TFL} follows then by taking $X',Y',Z'$ to be the respective supports of $f',g',h'$.
\end{proof}

\subsection*{Acknowledgments} We thank Yuval Wigderson and the anonymous reviewer for careful readings and comments on the manuscript. 

% \bibliographystyle{amsplain0.bst}
% \bibliography{strongremovallemma.bib}

\end{document}